
\documentclass[12pt,reqno,a4paper]{amsart}
\usepackage{amsmath,amssymb,amsfonts,amscd}
\usepackage[mathscr]{eucal}
\usepackage{yfonts}
\usepackage{tensor}
\usepackage{tikz}
\usepackage{tikz-cd}
\usepackage{arydshln}
\usepackage{hyperref,xcolor} 
\usepackage{comment}

\usetikzlibrary{shapes,decorations,arrows,calc,arrows.meta,fit,positioning}
\usetikzlibrary{snakes}
\tikzset{
    -Latex,auto,node distance =1 cm and 1 cm,thick,
    state/.style ={rectangle, draw, minimum width = 0.7 cm},
    point/.style = {rectangle, draw, inner sep=0.04cm,fill,node contents={}},
    odd/.style={Latex-Latex,draw=blue},
    even/.style={Latex-Latex,draw=black},
    el/.style = {inner sep=2pt, align=left, sloped}
}

\setcounter{tocdepth}{1} 

\topmargin=0cm \advance\textheight by 0.3cm
\oddsidemargin=0cm \advance\textwidth by 1.4in
\evensidemargin=0cm 

\def\co{\colon\thinspace}
\newcommand{\cat}[1] {\textcolor{red}{#1}}


\newtheorem{theorem}{Theorem}
\newtheorem{lemma}{Lemma}
\newtheorem{proposition}{Proposition}
\newtheorem{corollary}{Corollary}

\theoremstyle{definition}
\newtheorem{example}{Example}
\newtheorem{remark}{Remark}

\newcommand{\Z}{\mathbb{Z}}
\newcommand{\N}{\mathbb{N}}

\newcommand{\ZZ}{{\mathbb Z}_2}

\newcommand{\la}{{\lambda}}

\newcommand{\p}{\partial}

\renewcommand{\Im}{{\mathrm{Im}}}

\newcommand{\ach}{{(\underline{a})}}
\newcommand{\bch}{{(\underline{b})}}

\newcommand{\aund}{{\underline{a}}}

\newcommand{\bund}{{\underline{b}}}

\newcommand{\hmund}{{\underline{\hat \mu}}}
\newcommand{\hnund}{{\underline{\hat \nu}}}

\newcommand{\der}[2]{{\frac{\partial {#1}}{\partial {#2}}}}

\DeclareMathOperator{\cof}{cofactor}
\DeclareMathOperator{\id}{id}
\newcommand{\St}{\widetilde{\mathop{St}}}

\DeclareMathOperator{\Ber}{Ber}

\DeclareMathOperator{\spa}{span}
\DeclareMathOperator{\GL}{GL}

\DeclareMathOperator{\cl}{cl}

\newcommand{\lder}[2]{{\partial {#1}/\partial {#2}}}
\newcommand{\dder}[3]{{\frac{\partial^2 {#1}}{\partial {#2}\partial {#3}}}}

\newcommand{\ldder}[3]{{{\partial^2 {#1}}/{\partial {#2}\partial {#3}}}}

\newcommand{\R}[1]{{\mathbb R}^{#1}}
\newcommand{\RR}{\mathbb R}
\newcommand{\CC}{\mathbb C}
\newcommand{\fun}{C^{\infty}}

\newcommand{\al}{{\alpha}}
\newcommand{\be}{{\beta}}

\renewcommand{\d}{\delta}
\newcommand{\h}{{\eta}}
\newcommand{\e}{{\varepsilon}}
\renewcommand{\t}{{\theta}}

\newcommand{\x}{{\xi}}

\newcommand{\wed}{\wedge}

\renewcommand{\L}{{\Lambda}}

\newcommand{\itt}{{\tilde\imath}}
\newcommand{\jt}{{\tilde\jmath}}

\newcommand{\at}{{\tilde a}}
\newcommand{\bt}{{\tilde b}}
\newcommand{\ct}{{\tilde c}}
\newcommand{\dt}{{\tilde d}}
\newcommand{\pt}{{\tilde p}}

\newcommand{\vt}{{\tilde v}}
\newcommand{\wt}{{\tilde w}}

\DeclareMathOperator{\pl}{\text{\textswab{pl\"uck}}}

\DeclareMathOperator{\Pl}{\text{\textswab{Pl\"uck}}}

\newcommand{\xp}{\boldsymbol{x}}
\newcommand{\up}{\boldsymbol{u}}
\renewcommand{\wp}{\boldsymbol{w}}
\newcommand{\ep}{{e}}

\newcommand{\hnu}{\hat\nu}
\newcommand{\hmu}{\hat\mu}
\newcommand{\hb}{\hat\beta}

\newcommand{\hla}{\hat\lambda}

\newcommand{\pfin}{P_{1,-1}^{\text{\textrm{fin}}}\left(\L^{r|s}(V)\oplus \L^{s|r}(\Pi V)\right)}
\newcommand{\pfinth}{P_{1,-1}^{\text{\emph{fin}}}\left(\L^{r|s}(V)\oplus \L^{s|r}(\Pi V)\right)}
\newcommand{\pfinoo}{P_{1,-1}^{\text{\textrm{fin}}}\left(\L^{1|1}(V)\oplus \L^{1|1}(\Pi V)\right)}
\newcommand{\pess}{P^{\text{\textrm{ess}}}\left(\L^{r}(V)\right)}
\newcommand{\pessth}{P^{\text{\emph{ess}}}\left(\L^{r}(V)\right)}


\newcommand{\ocot}
{\begin{tikzpicture}
\path (90:1cm) coordinate (P1);
\path (162:1cm) coordinate (P2);
\path (234:1cm) coordinate (P3);
\path (306:1cm) coordinate (P4);
\path (18:1cm) coordinate (P5);
\draw (P1) -- (P2) -- (P3) -- (P4) -- (P5) -- cycle;
\node [anchor=south] at (P1)   {$1$};
\node [anchor=east] at (P2)   {$2$};
\node [anchor=east] at (P3)   {$3$};
\node [anchor=west] at (P4)   {$4$};
\node [anchor=west] at (P5)   {$5$};
\path [draw, very thick] (P1) -- (P4) -- cycle;
\path [draw, very thick] (P1) -- (P3) -- cycle;
\filldraw[red] (90:1cm) circle (2pt);
\filldraw[red] (306:1cm) circle (2pt);
\end{tikzpicture}}

\newcommand{\otoc}
{\begin{tikzpicture}
\path (90:1cm) coordinate (P1);
\path (162:1cm) coordinate (P2);
\path (234:1cm) coordinate (P3);
\path (306:1cm) coordinate (P4);
\path (18:1cm) coordinate (P5);
\draw (P1) -- (P2) -- (P3) -- (P4) -- (P5) -- cycle;
\node [anchor=south] at (P1)   {$1$};
\node [anchor=east] at (P2)   {$2$};
\node [anchor=east] at (P3)   {$3$};
\node [anchor=west] at (P4)   {$4$};
\node [anchor=west] at (P5)   {$5$};
\path [draw, very thick] (P1) -- (P4) -- cycle;
\path [draw, very thick] (P1) -- (P3) -- cycle;
\filldraw[red] (90:1cm) circle (2pt); 
\filldraw[red] (234:1cm) circle (2pt);
\end{tikzpicture}}

\newcommand{\tpot}
{\begin{tikzpicture}
\path (90:1cm) coordinate (P1);
\path (162:1cm) coordinate (P2);
\path (234:1cm) coordinate (P3);
\path (306:1cm) coordinate (P4);
\path (18:1cm) coordinate (P5);
\draw (P1) -- (P2) -- (P3) -- (P4) -- (P5) -- cycle;
\node [anchor=south] at (P1)   {$1$};
\node [anchor=east] at (P2)   {$2$};
\node [anchor=east] at (P3)   {$3$};
\node [anchor=west] at (P4)   {$4$};
\node [anchor=west] at (P5)   {$5$};
\path [draw, very thick] (P3) -- (P5) -- cycle;
\path [draw, very thick] (P1) -- (P3) -- cycle;
\filldraw[red] (234:1cm) circle (2pt); 
\filldraw[red] (18:1cm) circle (2pt); 
\end{tikzpicture}}

\newcommand{\ottp}
{\begin{tikzpicture}
\path (90:1cm) coordinate (P1);
\path (162:1cm) coordinate (P2);
\path (234:1cm) coordinate (P3);
\path (306:1cm) coordinate (P4);
\path (18:1cm) coordinate (P5);
\draw (P1) -- (P2) -- (P3) -- (P4) -- (P5) -- cycle;
\node [anchor=south] at (P1)   {$1$};
\node [anchor=east] at (P2)   {$2$};
\node [anchor=east] at (P3)   {$3$};
\node [anchor=west] at (P4)   {$4$};
\node [anchor=west] at (P5)   {$5$};
\path [draw, very thick] (P3) -- (P5) -- cycle;
\path [draw, very thick] (P1) -- (P3) -- cycle;
\filldraw[red] (234:1cm) circle (2pt); 
\filldraw[red] (90:1cm) circle (2pt); 
\end{tikzpicture}}

\newcommand{\dptp}
{\begin{tikzpicture}
\path (90:1cm) coordinate (P1);
\path (162:1cm) coordinate (P2);
\path (234:1cm) coordinate (P3);
\path (306:1cm) coordinate (P4);
\path (18:1cm) coordinate (P5);
\draw (P1) -- (P2) -- (P3) -- (P4) -- (P5) -- cycle;
\node [anchor=south] at (P1)   {$1$};
\node [anchor=east] at (P2)   {$2$};
\node [anchor=east] at (P3)   {$3$};
\node [anchor=west] at (P4)   {$4$};
\node [anchor=west] at (P5)   {$5$};
\path [draw, very thick] (P2) -- (P5) -- cycle;
\path [draw, very thick] (P3) -- (P5) -- cycle;
\filldraw[red] (162:1cm) circle (2pt); 
\filldraw[red] (18:1cm) circle (2pt); 
\end{tikzpicture}}

\newcommand{\tpdp}
{\begin{tikzpicture}
\path (90:1cm) coordinate (P1);
\path (162:1cm) coordinate (P2);
\path (234:1cm) coordinate (P3);
\path (306:1cm) coordinate (P4);
\path (18:1cm) coordinate (P5);
\draw (P1) -- (P2) -- (P3) -- (P4) -- (P5) -- cycle;
\node [anchor=south] at (P1)   {$1$};
\node [anchor=east] at (P2)   {$2$};
\node [anchor=east] at (P3)   {$3$};
\node [anchor=west] at (P4)   {$4$};
\node [anchor=west] at (P5)   {$5$};
\path [draw, very thick] (P2) -- (P5) -- cycle;
\path [draw, very thick] (P3) -- (P5) -- cycle;
\filldraw[red] (234:1cm) circle (2pt); 
\filldraw[red] (18:1cm) circle (2pt); 
\end{tikzpicture}}

\newcommand{\dcdp}
{\begin{tikzpicture}
\path (90:1cm) coordinate (P1);
\path (162:1cm) coordinate (P2);
\path (234:1cm) coordinate (P3);
\path (306:1cm) coordinate (P4);
\path (18:1cm) coordinate (P5);
\draw (P1) -- (P2) -- (P3) -- (P4) -- (P5) -- cycle;
\node [anchor=south] at (P1)   {$1$};
\node [anchor=east] at (P2)   {$2$};
\node [anchor=east] at (P3)   {$3$};
\node [anchor=west] at (P4)   {$4$};
\node [anchor=west] at (P5)   {$5$};
\path [draw, very thick] (P2) -- (P5) -- cycle;
\path [draw, very thick] (P2) -- (P4) -- cycle;
\filldraw[red] (162:1cm) circle (2pt); 
\filldraw[red] (306:1cm) circle (2pt); 
\end{tikzpicture}}

\newcommand{\dpdc}
{\begin{tikzpicture}
\path (90:1cm) coordinate (P1);
\path (162:1cm) coordinate (P2);
\path (234:1cm) coordinate (P3);
\path (306:1cm) coordinate (P4);
\path (18:1cm) coordinate (P5);
\draw (P1) -- (P2) -- (P3) -- (P4) -- (P5) -- cycle;
\node [anchor=south] at (P1)   {$1$};
\node [anchor=east] at (P2)   {$2$};
\node [anchor=east] at (P3)   {$3$};
\node [anchor=west] at (P4)   {$4$};
\node [anchor=west] at (P5)   {$5$};
\path [draw, very thick] (P2) -- (P5) -- cycle;
\path [draw, very thick] (P2) -- (P4) -- cycle;
\filldraw[red] (162:1cm) circle (2pt); 
\filldraw[red] (18:1cm) circle (2pt); 
\end{tikzpicture}}

\newcommand{\ocdc}
{\begin{tikzpicture}
\path (90:1cm) coordinate (P1);
\path (162:1cm) coordinate (P2);
\path (234:1cm) coordinate (P3);
\path (306:1cm) coordinate (P4);
\path (18:1cm) coordinate (P5);
\draw (P1) -- (P2) -- (P3) -- (P4) -- (P5) -- cycle;
\node [anchor=south] at (P1)   {$1$};
\node [anchor=east] at (P2)   {$2$};
\node [anchor=east] at (P3)   {$3$};
\node [anchor=west] at (P4)   {$4$};
\node [anchor=west] at (P5)   {$5$};
\path [draw, very thick] (P1) -- (P4) -- cycle;
\path [draw, very thick] (P2) -- (P4) -- cycle;
\filldraw[red] (90:1cm) circle (2pt); 
\filldraw[red] (306:1cm) circle (2pt); 
\end{tikzpicture}}

\newcommand{\dcoc}
{\begin{tikzpicture}
\path (90:1cm) coordinate (P1);
\path (162:1cm) coordinate (P2);
\path (234:1cm) coordinate (P3);
\path (306:1cm) coordinate (P4);
\path (18:1cm) coordinate (P5);
\draw (P1) -- (P2) -- (P3) -- (P4) -- (P5) -- cycle;
\node [anchor=south] at (P1)   {$1$};
\node [anchor=east] at (P2)   {$2$};
\node [anchor=east] at (P3)   {$3$};
\node [anchor=west] at (P4)   {$4$};
\node [anchor=west] at (P5)   {$5$};
\path [draw, very thick] (P1) -- (P4) -- cycle;
\path [draw, very thick] (P2) -- (P4) -- cycle;
\filldraw[red] (162:1cm) circle (2pt); 
\filldraw[red] (306:1cm) circle (2pt); 
\end{tikzpicture}}

\newcommand{\sqot}
{\begin{tikzpicture}
\path (45:1cm) coordinate (P1);
\path (135:1cm) coordinate (P2);
\path (225:1cm) coordinate (P3);
\path (315:1cm) coordinate (P4);
\draw (P1) -- (P2) -- (P3) -- (P4) -- cycle;
\node [anchor=west] at (P1)   {$1$};
\node [anchor=east] at (P2)   {$2$};
\node [anchor=east] at (P3)   {$3$};
\node [anchor=west] at (P4)   {$4$};
\path [draw, very thick] (P1) -- (P3) -- cycle;
\filldraw[red] (45:1cm) circle (2pt); 
\filldraw[red] (225:1cm) circle (2pt); 
\end{tikzpicture}}

\newcommand{\sqdc}
{\begin{tikzpicture}
\path (45:1cm) coordinate (P1);
\path (135:1cm) coordinate (P2);
\path (225:1cm) coordinate (P3);
\path (315:1cm) coordinate (P4);
\draw (P1) -- (P2) -- (P3) -- (P4) -- cycle;
\node [anchor=west] at (P1)   {$1$};
\node [anchor=east] at (P2)   {$2$};
\node [anchor=east] at (P3)   {$3$};
\node [anchor=west] at (P4)   {$4$};
\path [draw, very thick] (P2) -- (P4) -- cycle;
\filldraw[red] (135:1cm) circle (2pt); 
\filldraw[red] (315:1cm) circle (2pt); 
\end{tikzpicture}}

\newcommand{\pentapic}{%
\begin{tikzpicture}
\node (A) {\ocot};
\node (B) [below left =of A] {\tpot};
\node (C) [below  =of B] {\ottp};
\node (D) [below  =of C] {\dptp};
\node (E) [below right =of D] {\tpdp};
\node (F) [right =of E] {\dcdp};
\node (G) [above right =of F] {\dpdc};
\node (H) [above =of G] {\ocdc};
\node (I) [above =of H] {\dcoc};
\node (K) [right =of A] {\otoc};
\path[draw] (A) -- (K) [dashed, color=blue, very thick] node[midway, color=black] {$\t^3,\t^4$} -- cycle ;
\path[draw] (I) -- (H) [dashed, color=blue, very thick] node[midway, color=black] {$\t^1,\t^2$} -- cycle ;
\path[draw] (F) -- (G) [dashed, color=blue, very thick] node[midway, color=black] {$\t^4,\t^5$} -- cycle ;
\path[draw] (D) -- (E) [dashed, color=blue, very thick] node[midway, color=black] {$\t^2,\t^3$} -- cycle ;
\path[draw] (B) -- (C) [dashed, color=blue, very thick] node[midway, color=black] {$\t^1,\t^5$} -- cycle ;
%
\path [->] let \p1=($(A)-(B)$),\n1={atan2(\y1,\x1)},\n2={180+\n1} in
     ($ (B.\n1)!2pt!90:(A.\n2) $) edge node {$T^{35}$} ($ (A.\n2)!2pt!-90:(B.\n1) $);
    \path [->] let \p1=($(B)-(A)$),\n1={atan2(\y1,\x1)},\n2={180+\n1} in
    ($ (A.\n1)!2pt!90:(B.\n2) $) edge node {$T^{14}$} ($ (B.\n2)!2pt!-90:(A.\n1) $);
\path [->] let \p1=($(C)-(D)$),\n1={atan2(\y1,\x1)},\n2={180+\n1} in
     ($ (D.\n1)!2pt!90:(C.\n2) $) edge node {$T^{25}$} ($ (C.\n2)!2pt!-90:(D.\n1) $);
    \path [->] let \p1=($(D)-(C)$),\n1={atan2(\y1,\x1)},\n2={180+\n1} in
    ($ (C.\n1)!2pt!90:(D.\n2) $) edge node {$T^{13}$} ($ (D.\n2)!2pt!-90:(C.\n1) $);
\path [->] let \p1=($(E)-(F)$),\n1={atan2(\y1,\x1)},\n2={180+\n1} in
     ($ (F.\n1)!2pt!90:(E.\n2) $) edge node {$T^{24}$} ($ (E.\n2)!2pt!-90:(F.\n1) $);
    \path [->] let \p1=($(F)-(E)$),\n1={atan2(\y1,\x1)},\n2={180+\n1} in
    ($ (E.\n1)!2pt!90:(F.\n2) $) edge node {$T^{35}$} ($ (F.\n2)!2pt!-90:(E.\n1) $);
\path [->] let \p1=($(G)-(H)$),\n1={atan2(\y1,\x1)},\n2={180+\n1} in
     ($ (H.\n1)!2pt!90:(G.\n2) $) edge node {$T^{14}$} ($ (G.\n2)!2pt!-90:(H.\n1) $);
    \path [->] let \p1=($(H)-(G)$),\n1={atan2(\y1,\x1)},\n2={180+\n1} in
    ($ (G.\n1)!2pt!90:(H.\n2) $) edge node {$T^{25}$} ($ (H.\n2)!2pt!-90:(G.\n1) $);
\path [->] let \p1=($(I)-(K)$),\n1={atan2(\y1,\x1)},\n2={180+\n1} in
     ($ (K.\n1)!2pt!90:(I.\n2) $) edge node {$T^{13}$} ($ (I.\n2)!2pt!-90:(K.\n1) $);
    \path [->] let \p1=($(K)-(I)$),\n1={atan2(\y1,\x1)},\n2={180+\n1} in
    ($ (I.\n1)!2pt!90:(K.\n2) $) edge node {$T^{24}$} ($ (K.\n2)!2pt!-90:(I.\n1) $);
\end{tikzpicture}}


\title[Super Pl\"{u}cker embedding and cluster algebras]{On   super Pl\"{u}cker embedding and
cluster algebras}
\author{Ekaterina~Shemyakova}
\address{Department of Mathematics,  University of Toledo, Toledo,  Ohio, 43606, USA}
\email{ekaterina.shemyakova@utoledo.edu}
\author{Theodore~Voronov}
\address{Department of Mathematics,  University of Manchester, Manchester, M13 9PL,  UK  and
Faculty of Physics, Tomsk State University, Tomsk, 634050, Russia}
\email{theodore.voronov@manchester.ac.uk}

\subjclass[2000]{58A50}
\thanks{Research of the first author was partially supported by NSF under grant  1708033. Research of the second author was partially supported by LMS grants.}

\begin{document}
\begin{abstract}
We define a super analog of the classical Pl\"{u}cker embedding of the Grassmannian into a projective space. One of the  difficulties of the problem is rooted in the fact that   super exterior powers $\L^{r|s}(V)$  are not a simple  generalization from   the completely even case (this works only for $r|0$ when it is possible to use $\L^r(V)$). To construct the embedding we need to non-trivially combine a super vector space $V$ and its parity-reversion $\Pi V$. Our ``super Pl\"{u}cker map''    takes the  Grassmann supermanifold $G_{r|s}(V)$ to a   ``weighted projective space'' $P\left(\L^{r|s}(V)\oplus \L^{s|r}(\Pi V)\right)$ with weights $+1,-1$.
A simpler map $G_{r|0}(V)\to P(\L^r(V))$ works for the case $s=0$.
We  construct  a super analog of Pl\"{u}cker coordinates, prove that our map is an embedding, and obtain   ``super  Pl\"{u}cker relations''. We analyze another type  of  relations (due to Khudaverdian) and show their equivalence with the super Pl\"{u}cker relations for $r|s=2|0$.  We    discuss     application to  much sought-after super cluster algebras  and  construct  a  super cluster structure  for $G_2(\R{4|1})$ and $G_2(\R{5|1})$.
\end{abstract}

\maketitle
\tableofcontents

\section{Introduction}\label{sec.intro}
The classical Pl\"ucker map
is a map from the Grassmann manifold $G_k(V)$ of $k$-planes in the $n$-space $V$ to the projective space $P(\L^k(V))$ associated with  the space of multivectors of degree $k$ in $V$.
Pl\"ucker map is an embedding and its image in $P(\L^k(V))$ is specified by quadric equations known as the Pl\"ucker relations. This gives a realization of the Grassmann manifold as a projective algebraic variety.
The classical Pl{\"u}cker relations serve as a prototypal example for the definition of cluster algebras --- the notion that has been attracting great attention in recent years.

An analog for super Grassmannians of the Pl{\"u}cker map has been unknown
(except for one example in the recent work by Cervantes--Fioresi--Lled\'{o}~\cite{cervantes:quantum-chiral-2011}). It was   even expected not to exist in general, see Manin~\cite{manin:gaugeeng}. In the present paper we construct such an analog.

There are two cases  to distinguish. One is ``general'', i.e
the case of  $r|s$-planes in an $n|m$-space.
The main difficulty, in this case,  is  correct identification of the target of the map (which turns to be different from a straightforward analog of the classical situation).
For this case, we  found a correct  target space;   we  also   identified variables that serve as   analogs of the classical Pl\"{u}cker coordinates   and  we   proved that the  ``super Pl\"ucker map'' that we constructed is an embedding.
Another case,  which  we call ``algebraic'', is that of $r$-, i.e. $r|0$-planes in an $n|m$-space.
It
can be treated by
algebraic tools that are closer to those used in the classical theory. We investigated both cases   and  in particular obtained   the   ``super  Pl\"ucker'' relations. (For the algebraic case they can be obtained by two different methods and we compare these approaches.)

Super  Pl\"ucker relations  can have   direct   application to the much sought-after  ``super cluster algebras''.
A general notion of super cluster algebras algebras remains yet conjectural; different approaches have been put forward   by
Ovsienko~\cite{ovsienko-supercluster:2015},  Li--Mixco--Ransingh--Srivastava~\cite{srivastava:tosupercluster-2017}, and Ovsienko--Shapiro~\cite{ovsienko-shapiro:2018}. We have established a ``super cluster structure'' for the examples of super Grassmannians   $G_2(\R{4|1})$ and $G_2(\R{5|1})$. We believe it generalizes to arbitrary $G_r(\R{n|m})$.

We made an adaptation of the definition for $r|s$ superforms given in~\cite{tv:compl,tv:pdf,tv:cohom-1988}, \cite{tv:dual,tv:cartan1}, \cite{tv:git}. An $r|s$-vector is a function $F$ of $r$ even and $s$ odd covectors satisfying the two conditions:
\begin{equation}\label{eq.bercond0}
    F(P\cdot g)=F(P)\cdot\Ber g\,,
\end{equation}
for all $g\in \GL(r|s)$, and
\begin{equation}\label{eq.fundeqcond}
    \dder{F}{{p_a{}^i}}{{p_b{}^j}}  + (-1)^{\itt\,\jt +\at(\itt+\jt)} \dder{F}{p_a{}^j}{p_b{}^i} = 0\,.
\end{equation}
Here $P=(p_a{}^i)$ is the matrix of components of covectors. (By $\itt,\jt$, etc. we denote parities.)

The following ``non-linear analog'' of wedge product
is an example of such function $F$:
\begin{equation}\label{eq.nonlinwed1}
    [\up_1,\ldots,\up_r|\up_{r+1},\ldots,\up_{r+s}](p^1,\ldots,p^r|p^{r+1},\ldots,p^{r+s})=\Ber \Bigl(\langle \up_i,p^j\rangle\Bigr)\,,
\end{equation}
a rational function of vectors $\up_i \in V$ (as well as covectors $p^j$).
For $s=0$, it gives\,---\,up to a normalization factor\,---\,exactly the value of $\up_1\wed \ldots\wed \up_k$ on $k$ covector arguments.
Formula~\eqref{eq.nonlinwed1}
appeared in~\cite{hov:gayduk}  in the context of integration on supermanifolds as an example of an  $r|s$-density in the sense of A.~S.~Schwarz,  and linear combinations of such objects were called there   ``densities of type $\Ber$''. This formula was later considered  in the context of Voronov--Zorich superforms by A.~Belopolsky~\cite{bel:strings, bel:pco}, under the name  of  ``Pl\"ucker form'' (pointing at  its  relevance for a super  Pl\"ucker mapping\,---\, but this was no further developed).

One may try to define a ``super Pl\"ucker map'' by assigning
\begin{equation}
    L=\spa(\up_1,\ldots,\up_r|\up_{r+1},\ldots,\up_{r+s}) \mapsto F=[\up_1,\ldots,\up_r|\up_{r+1},\ldots,\up_{r+s}]\in \L^{r|s}(V) \,.
\end{equation}
If the vectors $\up_i$ undergo a non-degenerate linear transformation by an element   $g\in \GL(r|s)$, then $F$ is multiplied by an invertible factor $\Ber g$, so it may seem natural to take as the codomain the 
projectivization $P(\L^{r|s}(V))$, which would be completely analogous to the classical case. However, unlike the classical prototype, such a map would not be invertible.

It turns out that to be able to reconstruct $L$, one needs to consider in parallel also the parity-reversed plane $\Pi L\subset \Pi V$ and the corresponding mapping  $L\mapsto F^*\in \L^{s|r}(\Pi V)$.
In the general supercase, to be able to find $L$ from $F$ and $F^*$, one has to extend   them to different domains, so they really have to go in parallel, which leads to finally identifying the \emph{super Pl\"ucker map} as a   mapping
\begin{equation}
   G_{r|s}(V) \to P_{1,-1}\left(\L^{r|s}(V)\oplus \L^{s|r}(\Pi V)\right)\,,
\end{equation}
where at  the right-hand side $P_{1,-1}$ stands for   equivalence classes $(F,G)\sim (\la F, \la^{-1} G)$, which is a version of a weighted projected space,  with weights $+1$, $-1$.

We show that the so constructed super Pl\"ucker map is indeed an embedding.
Unlike the classical case, it is given by rational formulas. (Unless $s=0$ when we can redefine it back to  be  $G_{r|0}(V)\to P(\L^r(V))$ and the formulas are polynomial.)
The facts that in the general case the ambient space of the embedding is not a usual projective space and that the appearance of rational functions are  no wonder, of course, as it has been known (Manin~\cite{manin:gaugeeng}, Penkov~\cite{penkov:BWB1990}) that the general super Grassmannian is not a  projective algebraic variety.

This brings us to the question about an analog of the Pl\"ucker relations. We are able to obtain the complete result both in the algebraic case of $r|0$-planes and in the general case of $r|s$-planes in $n|m$-space. In the algebraic case $s=0$,  where two different approaches are possible,   we develop  and compare both.

A key step both for proving that the super Pl\"ucker map is  an embedding and for establishing a superanalog of Pl\"ucker relations was identification of the   correct analog of Pl\"ucker variables.
``Super Pl\"ucker  coordinates'' of an $r|s$-plane in an $n|m$-space consist of two sets of variables. The  even Pl\"ucker  coordinates are just various $r|s$-minors (the Berezinians of $r|s$-submatrices) of the    matrix whose rows span the plane.
The odd Pl\"ucker  coordinates are
``wrong'' minors, which are the above usual minors modified by replacing one even column by an odd matrix column. And the same is done for the parity-reversed matrix.
~\footnote{Use of such ``wrong'' minors as well the need to employ in parallel the parity reversed matrix is fundamentally related with the super Cramer  rule, see~\cite{rabin:duke} and \cite{tv:ber}; we also met this for ``super Wronskians''~\cite{tv:superdif}.}  In the algebraic case ($s=0$),
we in principle
obtain a larger set of variables as the components of the corresponding multivector $T\in \L^r(V)$ (there is no need to use $\Pi V$ in this case). However, part of the variables can be excluded by using (part of) algebraically obtained super Pl\"ucker relations and the remaining part (we call the ``essential'') coincide with those given by the general method as described above.

There is an interesting connection between super Pl\"ucker relations that we obtained and
{relations}
of Khudaverdian (around 2010, unpublished), who
noticed that
if $T$ is a simple multivector, then $1/T$ will also be a (simple) multivector of the parity-reversed degree in the space with reversed parity. Applying
the definition of exterior powers by differential equations\cat{~\eqref{eq.fundeqcond}}, one can obtain from here certain quadratic relations for such a multivector. It is indeed tempting to think that they will be exactly the Pl\"ucker relations in the classical case (and thus providing a way for the sought-for super analog). This turns out only partly true. Namely, we show here that for $k$-planes in $n|m$-space,  the Khudaverdian relations are equivalent to the (super) Pl\"ucker relations if $k=2$, but  starting from $k=3$,  they are only a corollary of the super Pl\"ucker relations.

The structure of the paper is as follows. In Section~\ref{sec.extern}, we recall some 
constructions of
supergeometry. In Section~\ref{sec.simpl}, we develop the general setup for the super Pl\"ucker map and study the simplest super case $r|s=k|0$. For this case, we obtain a complete description of the super Pl\"ucker relations by algebraic method.   In Section~\ref{sec.general}, we study the general case: introduce the super Pl\"ucker map for the Grassmannian $G_{r|s}(V)$ and prove that it is an embedding. We develop general method  for obtaining super Pl\"ucker relations.
We also
analyze there ``Khudaverdian's relations''.  In Section~\ref{sec.clust}, we discuss   application  of the super   Pl\"ucker relations  to super cluster algebras (which so far remain only partly known and, to an extent, conjectural). We consider examples where we are able to introduce a ``super cluster structure''.

Results of this paper, at various early stages, were reported at the special session of the AMS meeting in Madison in Fall 2019, at  the conference on supergeometry at Luxembourg (December 2019), and at S.\,P. Novikov's seminar on geometry and mathematical physics in Moscow. We are grateful to all the participants for discussions, and we thank especially M. Gekhtman, H.~Khudaverdian, I. Penkov, A. Schwarz, M. Shapiro, and A. Srivastava. We also thank the referee for detailed comments that helped to improve the text.

\section{Background: super exterior powers and super Grassmannians}\label{sec.extern}
%

\subsection{Basics} \label{subsec.extalgebra}

Let $V=V_0\oplus V_1$ be a superspace
or a free module over a commutative superalgebra.   Let $T(V)$, $S(V)$, $\L(V)$
be its \emph{tensor}, \emph{symmetric}, and \emph{exterior} algebras, respectively.
The superalgebras $S(V)=\bigoplus_{k\geq 0}S^k(V)$ and $\L(V)=\bigoplus_{k\geq 0}\L^k(V)$ are defined by
$S(V)=T(V)/\langle v\otimes w-(-1)^{\vt\wt}w\otimes v\rangle$ and $\L(V)=T(V)/\langle v\otimes w+(-1)^{\vt\wt}w\otimes v\rangle$. (Here $v,w\in V$.) They inherit $\Z$-grading from the tensor algebra. It is independent of $\ZZ$-grading which is induced from $V$.
The tilde over a symbol denotes its parity.  Recall that neither the symmetric nor the exterior powers have a top term.

An \emph{even supermatrix}
has
its rows and columns
labeled with a parity and the elements in even-even and odd-odd positions are even, and those in even-odd and odd-even positions are odd.
\emph{Superdeterminant} or   \emph{Berezinian} is defined
for a  square {even}  {supermatrix} $g$ given in its standard form as
\begin{equation} \label{eq:ber}
    \Ber g = \Ber \begin{pmatrix}
              g_{00} & g_{01} \\
              g_{10} & g_{11}
            \end{pmatrix}  = \frac{\det\left(g_{00}-g_{01}g_{11}^{-1}g_{10}\right)}{\det g_{11}}= \frac{\det g_{00}}{\det\left(g_{11}-g_{10}g_{00}^{-1}g_{01}\right)}\,.
\end{equation}
\emph{Parity reversion} of a (super)matrix $A\mapsto A^{\Pi}$ changes the parities of the rows and columns to the opposite.

The \textit{inverse Berezinian} $\Ber^*g$ of a matrix $g$ is defined (following~\cite{rabin:duke}, \cite{tv:ber}) as the Berezinian of the parity-reversed matrix.

$\Ber g$  can be extended to a multilinear function of the even rows or even columns filled by vectors of arbitrary parity,  \emph{the inverse Berezinian   $\Ber^*g$ can be extended to a multilinear function of  the odd rows or odd columns}. Thus $\Ber g$ and $\Ber^* g$ become defined on different domains and it is convenient to consider them in parallel.

Another necessary extension~\cite{tv:superdif} of the notion is
to case where only one row or column, in an even position for $\Ber g$ or in an odd position for $\Ber^*g$, is filled by a vector of the ``wrong'' parity (odd or even, respectively), so giving an odd quantity as $\Ber g$ or $\Ber^*g$. We shall refer to such an argument as a \emph{ghost row} or \emph{ghost column}. They arise in the row or column expansion formulas~\cite{tv:superdif}.

\subsection{``Voronov--Zorich'' super exterior powers}

We give here more details on already mentioned in Sec.~\ref{sec.intro} ``Voronov--Zorich'' exterior powers, which were originally discovered in search of the ``correct''   superanalog of differential forms~\cite{tv:compl,tv:pdf,tv:cohom-1988}, \cite{tv:git,tv:dual,tv:cartan1}.

\label{subsec.exteriorpowers}
For a superspace  $V$, where  $\dim V=n|m$,  an element of the \emph{exterior power} $\L^{r|s}(V)$ or a \emph{multivector of degree $r|s$}
is a function $F(p)=F(p^1,\ldots,p^{r+s})$ of $r$ even and $s$ odd covectors $p^i\in V^*$ satisfying two conditions: the ``covariance''
\begin{equation}\label{eq.bercond}
    F(p\cdot g)=F(p)\cdot\Ber g\,,
\end{equation}
for all $g\in GL(r|s)$, and the ``fundamental equations''
\begin{equation}\label{eq.fund}
    \dder{F}{{p_a{}^i}}{{p_b{}^j}}  + (-1)^{\itt\,\jt +\at(\itt+\jt)} \dder{F}{p_a{}^j}{p_b{}^i} = 0\,.
\end{equation}
(We assume that $r\geq 0$; in fact, the theory has to be complemented by a construction which allows also negative $r$, which we omit here.)
Here $p$ stands for the
array of arguments $p^1,\ldots,p^{r+s}$. We use  left coordinates of covectors  writing them as columns, so $p$ as the argument of $F(p)$ can be perceived as an (even) $n|m \times r|s$  matrix.

Equation~\eqref{eq.bercond} expresses the behavior of the function under a non-degenerate linear transformation of  the columns $p^i$.  In particular, it follows that $F(p)$ is invariant under     elementary transformations (adding to a column a multiple of another column) and  it is homogeneous of degree $+1$ with respect to each of  $r$ even columns and homogeneous of degree $-1$ in each of  $s$  odd columns. Hence $F(p)$ is defined only for linearly independent odd covectors, so $s$ has to be between $0$ and $m$. We should see $F(p)$ as a meromorphic function of the  arguments.

\begin{example} Suppose $s=0$.  Then
the sign in~\eqref{eq.fund} is always ``plus''. This implies, by setting $i=j$,   vanishing of all second derivatives in the variables $p_a{}^i$ for each fixed $i$; hence $F(p)$ has to be an affine function $A^ap_a{}^i+B^i$ in each $p^i$. Together with the homogeneity, this gives linearity in each $p^i$. Hence $F(p)$ is multilinear,
\begin{equation*}
    F(p)=F^{a_1\ldots a_r}p_{a_1}^1\ldots p_{a_r}^r\,.
\end{equation*}
One can check that the remaining part of equations~\eqref{eq.fund} and~\eqref{eq.bercond} is equivalent to the antisymmetry in the arguments $p^1,\ldots,p^r$. Hence  we have an identification
\begin{equation*}
    \L^{r|0}(V)=\L^r(V)\,,
\end{equation*}
where is the right-hand side we use the interpretation of the elements of   $\L^r(V)$ as   antisymmetric multilinear functions.
\end{example}

Note that in the purely even case $m=0$  implies $s=0$, so we   only have the exterior powers $\L^k(V)=\L^{k|0}(V)$.

\begin{example} Fundamental equations~\eqref{eq.fund} contain in particular the following system. If we set the parities of all indices to $1$, i.e. consider the odd-odd block of the matrix $(p_a^i)$ (note that the matrix entries there are even), then~\eqref{eq.fund} takes the form
\begin{equation*}
    \dder{F}{{p_a{}^i}}{{p_b{}^j}}  - \dder{F}{p_a{}^j}{p_b{}^i}=0\,.
\end{equation*}
This is  the  generalized  \emph{F.~John's equation} which is ubiquitous  in integral geometry~\cite{gelfand-gindikin-graev:intgeometry1980} and is key in the theory of general hypergeometric functions~\cite{gelfand:genhypergeom-1}.
\end{example}

The last example shows that there is a deep   connection between the theory of super exterior powers and integral geometry in the sense of Gelfand--Gindikin--Graev. Moreover, the defining system~\eqref{eq.fund} for $r|s$-multivectors can be seen as a ``super generalization'' of Gelfand's general hypergeometric system. This relation was   used in~\cite{tv:cohom-1988}, but  there is definitely  more to explore.

\begin{example}[``non-linear wedge product''] \label{ex.nonlinwedge}
Let $\up_1,\ldots,\up_r,\up_{r+1},\ldots,\up_{r+s}$ be a sequence of $r$ even and $s$ odd linearly independent vectors.
Define a function $[\up_1,\ldots,\up_r|\up_{r+1},\ldots,\up_{r+s}]$ as follows:
\begin{equation}\label{eq.nonlinwed}
    [\up_1,\ldots,\up_r|\up_{r+1},\ldots,\up_{r+s}](p):=\Ber (\up_i{}^ap_a{}^j)\,.
\end{equation}
These functions  satisfy~\eqref{eq.bercond} and~\eqref{eq.fund}, and so are elements of $\L^{r|s}(V)$. In the purely even case, when $s=0$, \eqref{eq.nonlinwed} would coincide with $\up_1\wed \ldots \wed \up_r$ up to a factor. Hence we have a \emph{non-linear analog of   wedge product} or   a \emph{simple multivector} of degree $r|s$.
\end{example}

In the super case, it is   not known if simple multivectors such as   $[\up_1,\ldots,\up_r|\up_{r+1},\ldots,\up_{r+s}]$ span the whole $\L^{r|s}(V)$. This space can be infinite-dimensional and its explicit description is not yet at hand.


\subsection{Recollection of super Grassmannians (and some related supergeometry)}\label{subsec.recol}

Suppose $V$ is a vector
space of dimension $n|m$. Fix a
dimension $r|s$, where $0\leq r\leq n$, $0\leq s\leq m$. Then the \emph{Grassmann supermanifold} (or \emph{super Grassmann manifold} or \emph{super Grassmannian}) $G_{r|s}(V)$ of $r|s$-planes in $V$ can be defined as the ``universal base'' for families of $r|s$-dimensional linear subspaces in $V$. 
See, for example,   Manin~\cite{manin:gaugeeng}, A.~Voronov--Manin--Penkov~\cite{voronov-manin-penkov:1988}, and Penkov~\cite{penkov:BWB1990} for formal definitions.

\begin{example}
Let $V=V_0\oplus V_1$ be a
superspace over
$\RR$ with
a basis consisting of
even $\ep_1,\ldots,\ep_n$ and odd $\ep_{\hat 1},\ldots,\ep_{\hat m}$, that span $V_0$ and $V_1$ respectively.
To allow the coordinates be taken from some auxiliary  commutative superalgebra $A$,
consider
 the tensor product $A\otimes V$ obtaining an $n|m$-dimensional free module over $A$.
If we take the coordinates $x^a$, $x^{\mu}$ as indeterminates (even and odd, respectively) and choose the superalgebra $A$ as the algebra of functions of these indeterminates (polynomial or $\fun$), then the vector $\xp\in (A\otimes V)_0$, $\xp=x^1\ep_1+\ldots+x^n\ep_n+x^{\hat 1}\ep_{\hat 1}+\ldots+x^{\hat m}\ep_{\hat m}$, is a \emph{universal even vector} in $V$ (i.e., giving the universal family in the sense that every family of even vectors with some other algebra $A'$ is obtained an algebra homomorphism $A\to A'$)\footnote{Some caution is required for $\fun$ functions as opposed to polynomials; but we do not discuss it here.}. The algebra  of functions of the variables $x^a$, $x^{\mu}$ is by definition the \emph{algebra of functions  on $V$ regarded as a supermanifold}.
\end{example}

\begin{remark}
We are writing coordinates of vectors to the left of basis elements, i.e. our free modules are left modules. 
Everything can be also easily reformulated for right modules and right coordinates.
\end{remark}

We denote the supermanifold obtained as explained from a $\ZZ$-graded vector space $V$  by the same symbol $V$. It  carries a    \emph{linear structure}  as a collection of privileged coordinate systems arising from homogeneous bases in $V$.

The term ``superspace''  from now will conveniently embrace  both interpretations of $V$.

Speaking of $V$ as a supermanifold, we   normally suppress the adjective ``universal'' for vectors and with an abuse of notation write   $\xp\in V$ or $\up\in V$, etc.

\begin{example}[Projective superspace] A  non-zero even vector in $V$
spans an $1|0$-dimensional subspace (a line)  in $V$. A  universal even vector $\up\in V$ spans  a  ``universal line''
Thus we arrive at the notion of the \emph{projective superspace} $P(V)$ associated with a superspace $V$ (or the \emph{projectivization} of $V$). $P(V)$ is a supermanifold of dimension $n-1|m$ if $\dim V=n|m$. It comes with a canonical atlas associated with a given basis in $V$. Linear coordinates on $V$,  $u^1,\ldots,u^n,u^{\hat 1}, \ldots, u^{\hat m}$ (the first $n$ even and the last $m$ odd),
where one of $u^a$, $a=1,\ldots,n$ is invertible, and   considered up to proportionality, are by definition \emph{homogeneous coordinates} on $P(V)$:
\begin{equation*}
    (u^1:\ldots:u^n:u^{\hat 1}: \ldots: u^{\hat m}) \, .
\end{equation*}
This gives $n$ canonical \emph{affine charts} on $P(V)$   obtained by dividing by one of the even homogeneous coordinates so to make it $1$. Changes of coordinates between charts have the same form as in the ordinary   case. For $V=\R{n|m}$, we write $P(V)=\RR P^{n-1|m}$. Similarly for  the complex version $\CC P^{n-1|m}=P(\CC^{n|m})$.
\end{example}

\begin{example}[$\RR P^{0|m}$ and $\RR P^{1|m}$]
For $\RR P^{0|m}$, there are homogeneous coordinates
$$(u:\x^1:\ldots:\x^m)=(1:\h^1:\ldots:\h^m)$$ (here
we are using Latin letters for even variables and Greek letters for odd variables),
$\h^i=\x^i/u$. Hence $\RR P^{0|m}=\R{0|m}$.
The underlying topological space is a point.

Consider now $\RR P^{1|m}$. We have homogeneous coordinates $$(u^1:u^2: \x^1:\ldots:\x^m)\,,$$
where either $u^1$ or $u^2$ is invertible. Hence there are two affine charts with $$(1:w^2_{(1)}:\h^1_{(1)}:\ldots:\h^m_{(1)})$$ and $$(w^1_{(2)}:1:\h^1_{(2)}:\ldots:\h^m_{(2)})\,,$$
and the changes of coordinates are: $w^2_{(1)}=1/w^1_{(2)}$ and $\h^i_{(1)}=\h^i_{(2)}/w^1_{(2)}$, where $i=1\ldots,m$. The underlying topological space is $\RR P^1$ and the supermanifold   $\RR P^{1|m}$ can be identified with the vector bundle $\Pi(\underbrace{E\oplus\ldots \oplus E}_{\text{$m$ copies}})$, where $E\to \RR P^1$ is the tautological line bundle.
\end{example}

One can similarly work out the structure of $\RR P^{n|m}$ and $\CC P^{n|m}$ in general.

The projective superspace $P(V)$ is by definition the super Grassmannian $G_{1|0}(V)$. The passage to general super Grassmannians   presents  no difficulty. Before  the definition,
we give two typical examples.

\begin{example}[$G_{2|0}(V)$] \label{ex.g20}
A $2$-plane in a superspace $V$ of dimension $n|m$ (note that $2=2|0$) is spanned by two even linearly independent vectors $\up_1$ and $\up_2$ in $V$:
\begin{equation*}
    \begin{pmatrix}
      \up_1 \\
      \up_2 \\
    \end{pmatrix}=
    \begin{pmatrix}
       x^1_1 & \ldots & x^n_1 & \vline & \xi^1_1 & \ldots & \xi^m_1 \\
       x^1_2 & \ldots & x^n_2 & \vline & \xi^1_2 & \ldots & \xi^m_2
     \end{pmatrix}\,.
\end{equation*}
Here
we write $x^a_i$  and $\x^{\mu}_i$  instead of
$u^a_i$  and $u^{\hat \mu}_i$ respectively, where $a=1,\ldots, n$, $\mu=1,\ldots,m$.
The variables denoted by Latin letters are even and those denoted by Greek letters are odd. The above matrix is analogous to the array of homogeneous coordinates for a projective superspace. We will denote it $U$ and refer to as \emph{homogeneous coordinates} on the {super Grassmannian} $G_{2|0}(V)=G_2(V)$ (yet to be   defined).
A choice of an invertible   $2\times 2$-submatrix corresponds by definition to an \emph{affine chart}; there are $\binom{n}{2}=n(n-1)/2$   such charts corresponding to a choice of two even columns, and for each of them we consider instead of $U$ a ``normalized'' matrix e.g.
\begin{equation*}
    \tensor[^{(n-1,n)}]{U}{}
    =
    \begin{pmatrix}
       x^1_1 & \ldots &  x^{n-2}_1 & 1 & 0 & \vline & \xi^1_1 & \ldots & \xi^m_1 \\
       x^1_2 & \ldots & x^{n-2}_2 & 0 & 1 & \vline & \xi^1_2 & \ldots & \xi^m_2
     \end{pmatrix}\,,
\end{equation*}
whose entries are by definition the \emph{inhomogeneous coordinates} in this chart. The pair $(n-1,n)$ as a left superscript indicates the chart. In general, the charts are numbered by pairs $(a,b)$ where $1\leq a<b\leq n$. In the detailed notation, it has to be attached to all matrix entries, e.g. $\tensor*[^{(n-1,n)}]{x}{^1_1}$ and  $\tensor*[^{(n-1,n)}]{\xi}{^1_1}$. Above they are suppressed for simplicity. The passage from one affine chart to another is by dividing by a $2\times 2$-submatrix from the left so to get the identity matrix in the required places, e.g.
\begin{equation*}
    \tensor[^{(12)}]{U}{}
    = \begin{pmatrix}
       \tensor*[^{(n-1,n)}]{x}{^1_1} &  \tensor*[^{(n-1,n)}]{x}{^2_1}  \\
       \tensor*[^{(n-1,n)}]{x}{^1_2} &  \tensor*[^{(n-1,n)}]{x}{^2_2}
     \end{pmatrix}^{-1} \tensor[^{(n-1,n)}]{U}{}\,.
\end{equation*}
The collection of inhomogeneous coordinates in each affine chart together with the described transformations of coordinates between the charts  can be seen as  an ``abstract atlas'' (see Berezin~\cite{berezin:antieng}, Manin~\cite{manin:gaugeeng}) specifying a supermanifold, which   by definition is the \emph{super Grassmannian} $G_{2|0}(V)=G_2(V)$. We have
\begin{equation*}
    \dim G_2(V) = 2(n-2)|2m
\end{equation*}
(the number of inhomogeneous coordinates in each chart). The underlying topological space is the ordinary Grassmannian $G_2(\R{n})$ of $2$-planes in $n$-space.
\end{example}

\begin{example}[$G_{1|1}(V)$] \label{ex.g11}
In
the case of $1|1$-planes in an $n|m$-dimensional  superspace $V$,
a plane is spanned by one even and one odd linearly independent vectors in $V$, e.g. $\up_1$ and $\up_{\hat 1}$.
(Here linear independence amounts to each $\up_1$ and $\up_{\hat 1}$ being non-zero, i.e. having an invertible even coordinate.) We can write their coordinates as a matrix
\begin{equation*}
    U=
    \begin{pmatrix}
       x^1  & \ldots & x^n  & \vline & \xi^1  & \ldots & \xi^m  \\
       \hline \vphantom{\int_a^b}
       \h^1  & \ldots & \h^n  & \vline & y^1  & \ldots & y^m
     \end{pmatrix}\,.
\end{equation*}
with one even    and one odd row and $n$ even and $m$ odd columns.  Here we write $x^a$  and $\x^{\mu}$  instead of
$u^a_1$  and $u^{\hat \mu}_1$, 
where $a=1,\ldots, n$, $\mu=1,\ldots,m$, for the even vector $\up_1$. Similarly we write $\h^a$ and $y^{\mu}$ instead of
$u^a_{\hat 1}$  and $u^{\hat \mu}_{\hat 1}$ for the odd vector $\up_{\hat 1}$.
The matrix $U$ is the matrix of \emph{homogeneous coordinates} on the  super Grassmannian  $G_{1|1}(V)$. A choice of an invertible $x^a$ and a choice of an  invertible $y^{\mu}$, i.e. a choice of one even and one odd column  giving an invertible $1|1\times 1|1$-submatrix in $U$,  makes it possible to replace $U$ by a normalized matrix, e.g.
\begin{equation*}
    \tensor[^{(n|m)}]{U}{}
    =
    \begin{pmatrix}
       x^1  & \ldots & x^{n-1} & 1 & \vline & \xi^1  & \ldots & \xi^{m-1} & 0  \\
       \hline \vphantom{\int_a^b}
       \h^1  & \ldots & \h^{n-1} & 0  & \vline & y^1  & \ldots & y^{m-1} &  1
     \end{pmatrix}
\end{equation*}
(where we suppressed the left superscript  $(n|m)$ for all matrix entries),
which is by definition the matrix of \emph{inhomogeneous coordinates} in the corresponding affine chart. Here the affine chart has the ``number'' $(n|m)$ and in general  the charts are numbered by pairs $(a|\mu)$, where $a=1,\ldots,n$, $\mu=1,\ldots,m$. Changes of coordinates are similar to the previous example, where now one needs to divide from the left by an invertible $1|1\times 1|1$-submatrix; e.g. the transformation between the $(n|m)$th chart and the $(1|1)$th chart is given by
\begin{equation*}
    \tensor[^{(1|1)}]{U}{}
    = \begin{pmatrix}
       \tensor*[^{(n|m)}]{x}{^1} & \vline &  \tensor*[^{(n|m)}]{\xi}{^1}  \\
        \hline  \vphantom{\int_a^b}
       \tensor*[^{(n|m)}]{\h}{^1} & \vline &  \tensor*[^{(n|m)}]{y}{^1}
     \end{pmatrix}^{-1} \tensor[^{(n|m)}]{U}{}\,.
\end{equation*}
The collection of inhomogeneous coordinates in each affine chart together with the given transformations of coordinates define  an  abstract atlas. The supermanifold it specifies is  by definition the \emph{super Grassmannian} $G_{1|1}(V)$. We have
\begin{equation*}
    \dim G_{1|1}(V) = n+m-2| n+m-2
\end{equation*}
(the number of inhomogeneous coordinates in any affine chart) and the underlying topological space is the product $\RR P^{n-1}\times \RR P^{m-1}$.
\end{example}

Note that in the above examples, in order to introduce an atlas for the projective superspace $P(V)$ or the super Grassmannians  $G_{2|0}(V)$ and $G_{1|1}(V)$ we assumed some given basis in $V$ (to make it possible to consider coordinates of vectors). ``Canonical atlases'' that we introduced are functions of such a basis. We continue with this assumption.

Now it is clear how the general super Grassmannian is defined. Fix $r$ and $s$ such that $0\leq r\leq n$ and $0\leq s\leq m$. An $r|s$-dimensional subspace in $V$ (short: $r|s$-plane) is specified by an array of $r$ even and $s$ odd vectors in $V$ that span it as a basis. Call them $\up_1 \ldots, \up_r, \up_{\hat 1}, \ldots, \up_{\hat s}$.  Such a basis is defined up to a non-degenerate linear transformation. As before, ``vectors'' and ``transformations'' are understood as universal, i.e. we start from the supermanifold
\begin{equation*}
    \underbrace{V\times \ldots \times V}_{\text{$r$ times}}\times
    \underbrace{\Pi V \times \ldots \times \Pi V}_{\text{$s$ times}}\,,
\end{equation*}
in which the condition of linear independence defines an open domain; denote it $\St_{r|s}(V)$\,\footnote{It is the analog of what topologists call ``non-compact Stiefel manifold'', i.e. the space of all $k$-frames in $n$-space as opposed to the space of orthonormal 
frames  which is the standard  (compact) Stiefel manifold.}. The supergroup $\GL(r|s)$ acts from the left on $\St_{r|s}(V)$ by linear transformations. The \emph{super Grassmann manifold} $G_{r|s}(V)$ by definition is the quotient
\begin{equation}
    G_{r|s}(V):= \St_{r|s}(V)/\GL(r|s)\,.
\end{equation}
In particular, we immediately see its dimension: 
\begin{equation}\label{eq.dimgrassrs}
    \dim  G_{r|s}(V) =
    (r|s)(n-r|m-s)= r(n-r)+ s(m-s)\,|\, r(m-s)+s(n-r)\,,
\end{equation}
i.e. $\dim \St_{r|s}(V)-\dim \GL(r|s)$, where  $\dim \St_{r|s}(V)= (r|s)(n|m)$ and $\dim \GL(r|s)=(r|s)^2$.  (Recall that $a|b=a+b\Pi$ with the multiplication rule $\Pi^2=1$.) The formula for the dimension  has the same form as for ordinary Grassmannians.

We can construct a standard atlas for the super Grassmannian $G_{r|s}(V)$ in the same way as we did in the examples.  Take the matrix $U$ of size $(r|s)\times (n|m)$ whose rows are the (left) coordinates $u_i{}^a$ of the vectors $\up_{i}$. Here $i\in \{1, \ldots, r\}\cup \{\hat 1, \ldots, \hat s\}$ and $a\in \{1, \ldots, n\}\cup \{\hat 1, \ldots, \hat m\}$. Hence
\begin{equation}\label{eq.U}
    U=\begin{pmatrix}
                    u_1{}^{\!1}    & \ldots & u_1{}^{\!n} & \vline & u_1{}^{\!\hat 1}  & \ldots & u_1{}^{\!\hat m} \\
                    \ldots    & \ldots & \ldots & \vline & \ldots  & \ldots & \ldots \\
                    u_r{}^{\!1}    & \ldots & u_r{}^{\!n} & \vline & u_r{}^{\!\hat 1}  & \ldots & u_r{}^{\!\hat m} \\
                   \hline  \vphantom{\int_a^b}
                   u_{\hat 1}{}^{\!1}  & \ldots & u_{\hat 1}{}^{\!n} & \vline & u_{\hat 1}{}^{\!\hat 1}  & \ldots & u_{\hat 1}{}^{\!\hat m}\\
                    \ldots    & \ldots & \ldots & \vline & \ldots  & \ldots & \ldots \\
                   u_{\hat s}{}^{\!1}  & \ldots & u_{\hat s}{}^{\!n} & \vline & u_{\hat s}{}^{\!\hat 1}  & \ldots & u_{\hat s}{}^{\!\hat m}\\
                  \end{pmatrix}\,.
\end{equation}
Matrix elements   $u_i{}^a$  are even for $i=1,\ldots,r$ and $a=1\ldots,n$ or $i=\hat 1,\ldots,\hat s$ and $a=\hat 1\ldots,\hat m$ (diagonal blocks) and odd for $i=1,\ldots,r$ and $a=\hat 1\ldots,\hat m$ or $i=\hat 1,\ldots,\hat s$ and $a=1\ldots,n$  (antidiagonal blocks). The matrix $U$ is the matrix of  \emph{homogeneous coordinates} for the super Grassmannian $G_{r|s}(V)$. As before, it can be convenient to use different letters for variables in different blocks and write also
\begin{equation}\label{eq.Ux}
    U=\begin{pmatrix}
                    x_1{}^{\!1}    & \ldots & x_1{}^{\!n} & \vline & \x_1{}^{\!1}  & \ldots & \x_1{}^{\!m} \\
                    \ldots    & \ldots & \ldots & \vline & \ldots  & \ldots & \ldots \\
                    x_r{}^{\!1}    & \ldots & x_r{}^{\!n} & \vline & \x_r{}^{\!1}  & \ldots & \x_r{}^{\!m} \\
                   \hline  \vphantom{\int_a^b}
                   \h_{1}{}^{\!1}  & \ldots & \h_{1}{}^{\!n} & \vline & y_{1}{}^{\!1}  & \ldots & y_{1}{}^{\!m}\\
                    \ldots    & \ldots & \ldots & \vline & \ldots  & \ldots & \ldots \\
                   \h_{s}{}^{\!1}  & \ldots & \h_{s}{}^{\!n} & \vline & y_{s}{}^{\!1}  & \ldots & y_{s}{}^{\!m}
                  \end{pmatrix}\,.
\end{equation}
By construction, the matrix $U$ has rank $r|s$, i.e. contains an invertible $(r|s)\times (r|s)$-submatrix. To obtain affine charts as before, we assume the invertibility of a particular such a submatrix. They are numbered by a choice of $r$ even and $s$ odd columns. So we can use a symbol $\ach= 
(a_1<\ldots <a_r\,|\,a_{\hat 1}<\ldots < a_{\hat s})$ 
for numbering charts. In the $\ach$th \emph{affine chart}, the matrix $U$ is replaced by the matrix of \emph{inhomogeneous coordinates} $\tensor[^{\ach}]{U}{}$, which   is effectively an $(r|s)\times (n-r|m-s)$-matrix augmented by the identity matrix of size $r|s$ spread over the positions indicated by $\ach$. (So the number of independent variables $(r|s)(n-r|m-s)$ coincides with the dimension calculated above~\eqref{eq.dimgrassrs}.) It is obtained from the matrix of homogeneous coordinates $U$ by
\begin{equation}\label{eq.inhomcoord}
    \tensor[^{\ach}]{U}{}= (U^{\aund})^{-1} U\,,
\end{equation}
where the notation with a  superscript such as $U^{\aund}$ denotes  the submatrix consisting of the indicated columns (with numbers $\aund$). We also refer to inhomogeneous coordinates as \emph{affine coordinates}. Changes of coordinates between affine charts are given by
\begin{equation}\label{eq.affcoordchange}
    \tensor[^{\ach}]{U}{}=  ((\tensor[^{\bch}]{U}{})^{\aund})^{-1}\, \tensor[^{\bch}]{U}{}\,.
\end{equation}
It is all the same, apart for a more abstract notation, as in Examples~\ref{ex.g20} and~\ref{ex.g11}.

The underlying topological space of the super Grassmannian $G_{r|s}(\R{n|m})$ is the product of the ordinary Grassmannians $G_r(\R{n})\times G_s(\R{m})$.
Note also the  symmetry under  parity reversion:
\begin{equation}\label{eq.piongrass}
    G_{r|s}(V)=G_{s|r}(\Pi V)\,,
\end{equation}
where the identification is given by $L\mapsto \Pi L\subset \Pi V$ for $L\subset V$. In terms of matrices $U$ or  $\tensor[^{\ach}]{U}{}$, it the usual  parity reversion on (super)matrices, e.g. $U\mapsto U^{\Pi}$.

In the following we  shall  habitually  suppress the prefix ``super'' speaking about the Grassmann supermanifolds $G_{r|s}(V)$. (Unless there is a risk of  confusion or we wish to make a stress.)

\subsection{Classical Pl\"{u}cker embedding. Problem in the supercase}

Recall first  the classical Pl\"{u}cker embedding.   It arises from considering a non-zero   $k$-vector $\up_1\wed \ldots \wed \up_k$ corresponding to a $k$-plane in $n$-space $V$, which is defined up to proportionality. The components of this multivector are called \emph{Pl\"{u}cker coordinates} of a plane. Algebraically, Pl\"{u}cker coordinates are minors of the matrix of homogeneous coordinates on the Grassmannian. They  satisfy a system of homogeneous quadric equations expressing the condition that a given non-zero $k$-vector $T\in \L^k(V)$ is \emph{simple}, i.e. a wedge product of $k$ vectors. If $T=T^{a_1\ldots a_k}\,e_{a_1}\wed\ldots\wed e_{a_k}$, these equations read:
\begin{equation}\label{eq.class-plueck}
    T^{a_1\ldots a_{k-1}b}T^{c_1\ldots c_k}
    =\sum_{j=1}^{k}  T^{a_1\ldots a_{k-1}c_j}T^{c_1\ldots c_{j-1}bc_{j+1}\ldots c_k}\,,
\end{equation}
for all $a_1,\ldots,a_k,b,c_1,\ldots,c_k$.
(We give them in the form convenient for comparison with the relations that will be obtained in the super case.)
For example, when $k=2$, for $T=T^{ab}e_a\wed e_b$, one has
\begin{equation}\label{eq.class-pluecktwo}
    T^{ac}T^{bd}
    =  T^{ab}T^{cd} +  T^{ad}T^{bc}\,.
\end{equation}
It is proved that this system~\eqref{eq.class-plueck} specifies the Grassmannian $G_k(V)$ as an algebraic variety in the   projectivization of the vector space of all $k$-vectors.   These equations are known as \emph{Pl\"{u}cker relations}. See e.g.~\cite{hodge-pedoe:II}, \cite{postnikov:geom2}, \cite{shaf:linearalg}. (Also~\cite{miller-sturmfels:2005} and also~\cite{kleiman-laksov:1972}, \cite{barnabei-brini-rota:1985}, and \cite{anick-rota:1991}.)

In the supercase the situation becomes substantially more  complicated. The naive wedge product of   basis vectors of a subspace $L\subset V$, even and odd indiscriminately, giving  an element of $\L^{r+s}(V)$, cannot work. (Since the result unlike the classical case will not be in the top exterior power of $L$, which simply does not exist.)
Also, if one starts from matrices, a  naive idea that ``one just needs to replace ordinary minors by Berezinians'' cannot work that simple already because   Berezinian by definition returns an even value and   odd variables are necessary for super Grassmannians, so a generalization of Berezinians will be needed. (That was briefly mentioned   in Section~\ref{sec.extern}. ``Super'' minors and   cofactors are   non-obvious, see~\cite{tv:superdif}.) Another difficulty is that  the ``correct'' space of multivectors $\L^{r|s}(V)$ for general $s$ does not have an explicit description.   
All these problems concern the  general case of non-zero $r$ and $s$ where $r|s$ is the dimension of a subspace $L$ (an $r|s$-plane) in a superspace $V$. We shall turn to it in Section~\ref{sec.general}.

If $s=0$, we have a ``purely even'' $r$-plane in an $n|m$-space such as in Example~\ref{ex.g20}. We will show that this case allows treating in a way essentially analogous to the ordinary case.  We will refer to the case $s=0$ as ``algebraic''. (The case $r=0$ becomes the same after applying parity reversion.)  We will study it in the next section.

\section{Super Pl\"{u}cker embedding.   ``Algebraic'' case} \label{sec.simpl}

\subsection{Algebraic case: $k$-planes in $n|m$-space}\label{subsec.algcase}
Consider planes of dimension $k=k|0$ in an $n|m$-dimensional superspace $V$. Here $0\leq k\leq n$. As we will see, this case  gives interesting answers  in spite of being the simplest from the supergeometry viewpoint.

Every $k$-plane is spanned by $k$ independent even vectors $\up_1,\ldots,\up_k$ that make its basis. (We always assume the possibility of odd parameters without mentioning this each time.) Components of these vectors can be written as an even $k\times n|m$ matrix with independent rows.
It is the  matrix of  homogeneous coordinates  of the plane $L$ as a point of the Grassmann supermanifold $G_k(V)$.
Consider the exterior product
\begin{equation}\label{eq.u1weduk}
    T:=\up_1\wed \ldots \wed \up_k\,.
\end{equation}
It is an even element of the vector space  $\L^k(V)$. The multivector $T$   is \emph{non-degenerate} in the following sense: in the expansion over $e_{a_1}\wed\ldots \wed e_{a_k}$ at least one of its components where all the indices are even is invertible. (Indeed, if we reduce modulo odd elements, this will be the ordinary condition of linear independence for the vectors $\up_1,\ldots,\up_k$.) It is a stronger condition on an even multivector $T$ than  ``being non-zero'' (or more precisely, having at least one  invertible component). (Example: $e_1\wed e_2+e_2\wed e_3$ is non-degenerate, while $e_{\hat 1}\wed e_{\hat 1}$, not.) Hence, in particular, a non-degenerate multivector spans a line in the space of multivectors, so there is a well-defined map
\begin{equation}\label{eq.simpleplueck}
    G_k(V) \to P(\L^k(V))\,
\end{equation}
given by $L\mapsto \up_1\wed \ldots \wed \up_k$, which is
directly analogous to the classical Pl\"{u}cker map. Due to the non-degeneracy, the image of~\eqref{eq.simpleplueck} is contained in some ``affine part'' of the projective superspace $P(\L^k(V))$, i.e. covered by only part of the affine charts.

The following facts   provide a superanalog of the classical Pl\"ucker theory for our case.

Let $T\in \L^k(V)$ be an even multivector. It can be considered as an alternating multilinear function of $k$ covectors. 
If
\begin{equation}\label{eq.Twedge}
    T=T^{a_1\ldots a_k}e_{a_1}\wed \ldots \wed e_{a_k}
\end{equation}
with the (super)antisymmetry condition $T^{a_1\ldots a_ia_{i+1} \ldots a_k}=-(-1)^{\at_i\at_{i+1}}T^{a_1\ldots a_{i+1}a_i \ldots a_k}$, then
\begin{equation}\label{eq.Ttens}
    T=T^{a_1\ldots a_k}e_{a_1}\otimes \ldots \otimes e_{a_k}\,,
\end{equation}
exactly as in the ordinary case, and for arbitrary covectors $p^1, \ldots,p^k$
\begin{equation*}
    T(p^1, \ldots,p^k)= T^{a_1\ldots a_k} p^1_{a_1}\ldots p^k_{a_k} (-1)^{\pt^1(\at_2+\ldots+\at_k)+\ldots+\pt^{k-1}\at_k}\,.
\end{equation*}
In particular, if all arguments $p^i\in V^*$ are \emph{even}, then
\begin{equation}
    T(p^1, \ldots,p^k)= T^{a_1\ldots a_k} p^1_{a_1}\ldots p^k_{a_k}
\end{equation}
and for   covectors of the dual basis,
\begin{equation}
    T(e^{a_1},\ldots, e^{a_k})= T^{a_1\ldots a_k}
    (-1)^{\at^{1}(\at_2+\ldots+\at_k)+\ldots+\at^{k-1}\at_k}\,.
\end{equation}
(Here $e_a(e^b)=\d_a^b$.)
If all arguments but one in $ T(p^1, \ldots,p^k)$ are fixed, this gives a linear function on covectors, i.e. a vector. Call the span of the so obtained vectors, the \emph{associated space} of an even multivector $T$, notation:
\begin{equation}
    L_T:=\spa\left(\,\vphantom{T^{a_1,\ldots a_{k-1}b}}T(p^1,\ldots,p^{k-1},-)\,|\, p^i\in V^*\,\right)  \subset V
\end{equation}
(order inessential because of the antisymmetry). Here the covectors $p^i$ may be of arbitrary parity. A priori nothing can be said about the dimension of the subspace $L_T$ and from nowhere it follows that $L_T$  should be purely even. (In spite of $T$ itself being even.)
\begin{lemma}\label{lm.LT}
\begin{equation}
    \label{eq.LT}
    L_T=\spa\left(T^{a_1,\ldots a_{k-1}b}e_b(-1)^{\bt(\at_1+\ldots+\at_{k-1})}\ |\  \forall  a_1,\ldots, a_{k-1} \right)\,.
\end{equation}
\end{lemma}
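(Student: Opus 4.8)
The plan is to exploit that $T$, being an alternating multilinear function of $k$ covectors, becomes a graded-linear function of a single covector once the other $k-1$ arguments are fixed; via the identification $V^{**}=V$ recorded earlier this function \emph{is} a vector in $V$, which is exactly an element of $L_T$. First I would observe that both sides of~\eqref{eq.LT} are $A$-submodules of $V$, so it suffices to exhibit a common spanning family. Since $T(p^1,\ldots,p^{k-1},-)$ is multilinear in $p^1,\ldots,p^{k-1}$ and every covector expands in the dual basis as $p^i=e^a p^i_a$ (the components $p^i_a$ being scalars of arbitrary parity, so that odd covectors are covered), graded multilinearity lets me write each generator of $L_T$ as an $A$-linear combination, with Koszul sign factors, of the special vectors $v_{a_1\ldots a_{k-1}}:=T(e^{a_1},\ldots,e^{a_{k-1}},-)$. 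Hence $L_T=\spa(v_{a_1\ldots a_{k-1}})$, and the whole problem reduces to identifying these particular vectors.

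To identify $v_{a_1\ldots a_{k-1}}$, I would read off its left components by pairing it with the dual basis covector $e^b$ in the free slot. Because the bases are chosen with $e_b(e^{b'})=\d_b^{b'}$, the $b$-th component of a vector is just its value on $e^b$, so $v^b_{a_1\ldots a_{k-1}}=T(e^{a_1},\ldots,e^{a_{k-1}},e^b)$. The explicit formula $T(e^{a_1},\ldots,e^{a_k})=T^{a_1\ldots a_k}(-1)^{\at^1(\at_2+\ldots+\at_k)+\ldots+\at^{k-1}\at_k}$ recorded just above the statement then gives these components directly; specializing $a_k=b$ and splitting the resulting exponent, and moving the even scalar sign factors past $e_b$, yields $v_{a_1\ldots a_{k-1}}=(-1)^{\sum_{i<j}\at_i\at_j}\,T^{a_1\ldots a_{k-1}b}e_b(-1)^{\bt(\at_1+\ldots+\at_{k-1})}$. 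The expression on the right is precisely the generator appearing in~\eqref{eq.LT}, up to the overall prefactor $(-1)^{\sum_{i<j}\at_i\at_j}$.

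The final observation closes the argument: for each fixed index tuple $(a_1,\ldots,a_{k-1})$ the prefactor $(-1)^{\sum_{i<j}\at_i\at_j}=\pm1$ is a fixed invertible scalar, so it has no effect on the span. Therefore $\spa(v_{a_1\ldots a_{k-1}})$ coincides with the span of the generators in~\eqref{eq.LT}, and combined with the reduction of the first paragraph this proves the lemma. I expect the only genuine care to be demanded by the sign bookkeeping — tracking the Koszul signs through graded multilinearity and through the passage of the scalar sign factors across the basis vector $e_b$ — together with the (routine but worth stating) point that these per-tuple signs are invisible at the level of $A$-linear spans; everything else follows immediately from multilinearity and the identification $V^{**}=V$.
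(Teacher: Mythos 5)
Your proof is correct and takes essentially the same route as the paper's: both reduce by multilinearity to evaluating $T$ on basis covectors, and both separate the per-tuple common sign $(-1)^{\sum_{i<j}\at_i\at_j}$ (the paper's $(-1)^{\e(b_1,\ldots,b_{k-1})}$), which is invisible at the level of spans, from the sign $(-1)^{\bt(\at_1+\ldots+\at_{k-1})}$ that participates in the summation over $b$ and must be kept. The only cosmetic difference is that you read off the components of $T(e^{a_1},\ldots,e^{a_{k-1}},-)$ by pairing with $e^b$ and invoking the recorded formula for $T(e^{a_1},\ldots,e^{a_k})$, whereas the paper evaluates $T(e^{b_1},\ldots,e^{b_{k-1}},p)$ directly on a generic last argument.
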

\begin{remark}
The sign is~\eqref{eq.LT} is not a common sign, but  involved in the summation and thus cannot be dropped. It is important because it contributes to the relations, see below.
\end{remark}
\begin{proof}[Proof of Lemma~\ref{lm.LT}] To obtain vectors spanning $L_T$, it is sufficient to insert basis covectors as the first $k-1$ arguments into $T$ (leaving the last one free). We have
\begin{multline*}
    T(e^{b_1},\ldots,e^{b_{k-1}},p)=T^{a_1\ldots a_k}(e_{a_1}\otimes \ldots \otimes e_{a_k}) (e^{b_1},\ldots,e^{b_{k-1}},p)= \\
    T^{a_1\ldots a_k}e_{a_1}(e^{b_1})\ldots e_{a_{k-1}}(e^{b_{k-1}})\, e_{a_k}(p)
    (-1)^{\bt_1(\at_2+\ldots+\at_k)+\ldots+ \bt_{k-1}\at_k}=\\
    T^{b_1\ldots b_{k-1}a_k} e_{a_k}(p) \, (-1)^{\e(b_1,\ldots,b_{k-1})} (-1)^{(\bt_1+\ldots+\bt_{k-1})\at_k}\,,
\end{multline*}
where $\e(b_1,\ldots,b_{k-1})= \bt_1(\bt_2+\ldots+\bt_{k-1})+\bt_2(\bt_3+\ldots+\bt_{k-1})+\ldots+ \bt_{k-2}\bt_{k-1}$ is some common sign which     makes no difference for the span (so can be dropped). The second sign participates in the summation, hence cannot be dropped.
\end{proof}

\begin{example}\label{ex.2in4.1}
We shall find the space $L_T$ in the concrete case of a $2$-vector  in a $4|1$-space. (The dimensions $2$ and $4$ give the simplest case in the classical situation where the Pl\"{u}cker relations arise; in this example and later, we will be probing the closest to it supercase.) A general even $2$-vector $T$ has the form
\begin{multline}\label{eq.T}
    T= 2T^{12}e_1\wed e_2 + 2T^{13}e_1\wed e_3 + 2T^{14}e_1\wed e_4 +
    2T^{23}e_2\wed e_3 + 2T^{24}e_2\wed e_4 + 2T^{34}e_3\wed e_4  \\
   +  2\t^1e_1\wed \e + 2\t^2e_2\wed \e + 2\t^3e_3\wed \e + 2\t^4e_4\wed \e +
    s\,\e\wed \e\,.
\end{multline}
Here the components $T^{ab}$, $a,b=1,\ldots, 4$, and $s$ are even and the components $\t^{a}$, $a=1,\ldots, 4$, are odd. We use $\t^{a}$ for $T^{a\hat 1}$ and $s$ for $T^{\hat 1\hat 1}$. Expansion~\eqref{eq.T} corresponds to the formula $T=T^{ab}e_a\wed e_b$ where the summation is extended to all values $a,b\in \{1,\ldots,4,\hat 1\}$ and $T^{ab}=-(-1)^{\at\bt}T^{ba}$. To treat $T$ as a function on covectors, replace $2e_a\wed e_b=e_a\otimes e_b-e_b\otimes e_a$ and $2e_a\wed \e=e_a\otimes \e-\e \otimes e_a$, $a=1,\ldots 4$, and $\e\wed \e=\e\otimes \e$. Inserting a covector $p=e^ap_a$ (here summation is over all values of $a$) as the first argument gives after rearrangement
\begin{multline}\label{eq.Tp}
    T(p,-)= p_1\left(T^{12}e_2+T^{13}e_3+T^{14}e_4+\t^{1}\e\right)+
    p_2\left(T^{21}e_1+T^{23}e_3+T^{24}e_4+\t^{2}\e\right)+\\
    p_3\left(T^{31}e_1+T^{32}e_2+T^{34}e_4+\t^{3}\e\right)+
    p_4\left(T^{41}e_1+T^{42}e_2+T^{43}e_3+\t^{4}\e\right)+\\
    p_{\hat 1}(-1)^{\pt}\left(\t^{1}e_1+\t^{2}e_2+\t^{3}e_3+ \t^{4}e_4+s\e\right)\,.
\end{multline}
Therefore the subspace $L_T\subset V$ is spanned by the vectors
\begin{align}\label{eq.f1}
    f^1&:= T^{12}e_2+T^{13}e_3+T^{14}e_4+\t^{1}\e\,,\\
    f^2&:= T^{21}e_1+T^{23}e_3+T^{24}e_4+\t^{2}\e\,,\\
    f^3&:= T^{31}e_1+T^{32}e_2+T^{34}e_4+\t^{3}\e\,,\\
    f^4&:= T^{41}e_1+T^{42}e_2+T^{43}e_3+\t^{4}\e\,,\\
    f^{\hat 1} &:= \t^{1}e_1+\t^{2}e_2+\t^{3}e_3+ \t^{4}e_4+s\e\,,
\end{align}
where $f^1, f^2, f^3, f^4$ are even and $f^{\hat 1}$ is odd. Note that $f^a=T^{ab}e_b$ for $a=1,\ldots,4$ and $f^{\hat 1}= T^{b\hat 1}e_b=-T^{\hat 1 b}(-1)^{\bt}$, so up to a sign we have obtained exactly the vectors   given by formula~\eqref{eq.LT} in Lemma~\ref{lm.LT}. For a ``generic'' $T$, we will have $L_T= V$\,.
\end{example}

In analogy with the classical case, we call an   even multivector $T\in \L^k(V)$ \emph{simple} if $T=\up_1\wed \ldots \wed \up_k$ for some independent even vectors $\up_1,\ldots,\up_k$.

\begin{lemma}
For a  simple multivector $T=\up_1\wed \ldots \wed \up_k$,
\begin{equation*}
    L_T=\spa(\up_1,\ldots,\up_k)\,.
\end{equation*}
\end{lemma}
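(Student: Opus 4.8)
The plan is to prove the two inclusions $L_T\subseteq \spa(\up_1,\ldots,\up_k)$ and $\spa(\up_1,\ldots,\up_k)\subseteq L_T$ at once, by evaluating $L_T$ through the explicit formula of Lemma~\ref{lm.LT} in a basis adapted to the plane. Write $W:=\spa(\up_1,\ldots,\up_k)$. Since $T$ is simple it is in particular nondegenerate (in the sense spelled out just before~\eqref{eq.simpleplueck}), so the even vectors $\up_1,\ldots,\up_k$ can be completed to a homogeneous basis $e_1,\ldots,e_n,e_{\hat 1},\ldots,e_{\hat m}$ of $V$ with $e_i=\up_i$ for $i=1,\ldots,k$. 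This is the super-linear-algebra fact that a unimodular independent even family extends to a basis; concretely it is exactly the requirement that the coordinate matrix of the $\up_i$ contain an invertible even $k\times k$ block.

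First I would record the components of $T$ in this adapted basis. Since $T=e_1\wed\ldots\wed e_k$, comparison with the expansion $T=T^{a_1\ldots a_k}e_{a_1}\wed\ldots\wed e_{a_k}$ of~\eqref{eq.Twedge} gives $T^{a_1\ldots a_k}=\tfrac{1}{k!}(-1)^{\sigma}$ when $(a_1,\ldots,a_k)$ is a permutation $\sigma$ of $(1,\ldots,k)$, and $T^{a_1\ldots a_k}=0$ otherwise. In particular every nonzero component has all of its indices in $\{1,\ldots,k\}$, so all indices are even and no super signs intervene.

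Then I would feed this into formula~\eqref{eq.LT}. The spanning vector $T^{a_1\ldots a_{k-1}b}e_b(-1)^{\bt(\at_1+\ldots+\at_{k-1})}$ is nonzero only when $\{a_1,\ldots,a_{k-1},b\}=\{1,\ldots,k\}$; in that case $b\in\{1,\ldots,k\}$, so $e_b=\up_b\in W$, and since all indices are even the sign reduces to $+1$. This yields $L_T\subseteq W$ immediately. For the reverse inclusion, fix $b_0\in\{1,\ldots,k\}$ and take $(a_1,\ldots,a_{k-1})$ to be the increasing enumeration of $\{1,\ldots,k\}\setminus\{b_0\}$; the corresponding component $T^{a_1\ldots a_{k-1}b_0}$ equals the invertible scalar $\pm\tfrac{1}{k!}$, so the associated spanning vector is a nonzero multiple of $\up_{b_0}$. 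Letting $b_0$ run over $1,\ldots,k$ recovers $\up_1,\ldots,\up_k$, hence $W\subseteq L_T$. Combining the two inclusions gives $L_T=W=\spa(\up_1,\ldots,\up_k)$.

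The step that I expect to need the most care is the very first one: justifying that $\up_1,\ldots,\up_k$ extend to a homogeneous basis over the structure superalgebra. Over a field this is automatic, but with odd parameters present one must use that the coordinate matrix contains an invertible even $k\times k$ block (the nondegeneracy of $T$), reduce modulo the nilpotent odd ideal to obtain genuine linear independence over the ground field, and lift. An alternative that sidesteps the change of basis is to compute directly from Example~\ref{ex.wedasfun}: since the $\up_i$ are even, $T(p^1,\ldots,p^{k-1},-)=\tfrac{1}{k!}\det\nolimits_{\mathrm{col}}\bigl(\langle\up_i,p^j\rangle\bigr)$, and expanding this column determinant along the free last column exhibits it as a coefficient combination of the linear functionals $\langle\up_i,-\rangle$, i.e.\ of the $\up_i$; choosing $p^1,\ldots,p^{k-1}$ dual to the $\up_i$ then isolates each $\up_{b_0}$. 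Both routes rely on the same nondegeneracy input.
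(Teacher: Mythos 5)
Your proof is correct and takes essentially the same route as the paper's: the paper likewise completes $\up_1,\ldots,\up_k$ to a basis of $V$ and fills the slots of $T$ with $k-1$ dual-basis covectors to recover $\pm\up_i$, while the inclusion $L_T\subseteq\spa(\up_1,\ldots,\up_k)$ is immediate because any evaluation of $T$ is a linear combination of the $\up_i$. The differences are only cosmetic — you channel both inclusions through the component formula of Lemma~\ref{lm.LT} in the adapted basis and spell out the basis-extension step over the structure superalgebra, which the paper uses without comment.
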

\begin{proof} Clearly, filling all arguments but one in $T=\up_1\wed \ldots \wed \up_k$ gives a linear combination of $\up_i$, so $L_T\subset \spa(\up_1,\ldots,\up_k)$. For the converse, complement the vectors $\up_1,\ldots,\up_k$ to a basis in $V$ and fill the slots of $T$ with  $k-1$ of the first $k$ covectors of the dual basis, which would give    $\pm \up_i$, $i=1,\ldots,k$. Hence all $\up_i$ are in $L_T$.
\end{proof}

\begin{example}
In the setup of Example~\ref{ex.2in4.1}, if $T=\up\wed \wp$, then (since both $\up$ and $\wp$ are even), $\up\wed \wp=\frac{1}{2}(\up\otimes \wp- \wp\otimes \up)$ exactly as in the classical case, so for an arbitrary $p$,
\begin{equation*}
    T(p,-)= \frac{1}{2}(\langle\up, p\rangle  \wp- \langle\wp,p\rangle  \up)\,.
\end{equation*}
Specifically for the vectors $f^a$ that, as we have obtained, span the subspace $L_T$, we can see that they reduce to linear combinations of $\up$ and $\wp$, given explicitly as $f^a=\frac{1}{2}(-1)^{\at}\left|\begin{smallmatrix}
                     u^a & w^a \\
                     \up & \wp
                   \end{smallmatrix}\right|$\,, $a\in\{1,\ldots,4,\hat 1\}$.
\end{example}

\begin{theorem}[``abstract form of super Pl\"ucker relations'']
An even multivector $T\in \L^k(V)$ is simple if and only if it is non-degenerate and
\begin{equation}\label{eq.wwedT}
    \wp\wed T=0
\end{equation}
for every $\wp\in L_T$\,.
\end{theorem}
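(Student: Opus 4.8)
The plan is to prove the two implications separately. The forward direction (simple $\Rightarrow$ the two conditions) is essentially immediate, while the converse reduces, after a suitable change of basis, to the classical wedge-annihilation argument, adapted to the super monomial basis.

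First I would treat the easy direction. Suppose $T=\up_1\wed\ldots\wed\up_k$ with $\up_1,\ldots,\up_k$ independent even vectors. Non-degeneracy holds because the bodies $\bar\up_1,\ldots,\bar\up_k$ are linearly independent, so the reduction modulo nilpotents of the purely-even-index part of $T$ is the nonzero ordinary $k$-vector $\bar\up_1\wed\ldots\wed\bar\up_k$; hence some even-index component of $T$ is invertible. By the preceding lemma $L_T=\spa(\up_1,\ldots,\up_k)$, so an arbitrary (even or odd) $\wp\in L_T$ has the form $\wp=\sum_i\alpha_i\up_i$, and then $\wp\wed T=\sum_i\alpha_i\,\up_i\wed\up_1\wed\ldots\wed\up_k=0$, each summand containing a repeated even factor $\up_i$ (even vectors anticommute, so $\up_i\wed\up_i=0$).

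For the converse, assume $T$ is non-degenerate and $\wp\wed T=0$ for all $\wp\in L_T$. After permuting the even basis vectors I may assume $T^{12\ldots k}$ is invertible. Consider the $k$ contraction vectors $\vp_i:=T(e^1,\ldots,e^{i-1},e^{i+1},\ldots,e^k,-)$, which lie in $L_T$ and are even. Feeding even dual covectors into the formula of Lemma~\ref{lm.LT}, the $e_{i'}$-component of $\vp_i$ with $i'\le k$ vanishes for $i'\ne i$ (repeated even index, by antisymmetry) and equals $\pm T^{12\ldots k}$ for $i'=i$; thus $\vp_1,\ldots,\vp_k$ have an invertible $k\times k$ block of components and are linearly independent even vectors. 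I would then complete them to a homogeneous basis of $V$ and expand $T$ in the associated standard monomial basis of $\L^k(V)$ (distinct increasing even factors, non-decreasing odd factors, repetitions of odd vectors allowed). Since each $\vp_j\in L_T$, the hypothesis gives $\vp_j\wed T=0$ for $j=1,\ldots,k$. For a standard monomial $B$ one has $\vp_j\wed B=0$ precisely when $\vp_j$ is among the even factors of $B$, and otherwise $\vp_j\wed B$ is, up to sign, a standard monomial of degree $k+1$; as these are linearly independent for distinct $B$, the equation $\vp_j\wed T=0$ kills the coefficient of every monomial not containing $\vp_j$ as an even factor. Running $j$ over $1,\ldots,k$ annihilates all monomials except $\vp_1\wed\ldots\wed\vp_k$ (the unique degree-$k$ monomial whose even factors include all of $\vp_1,\ldots,\vp_k$), giving $T=c\,\vp_1\wed\ldots\wed\vp_k$. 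In this basis the only even-index components of $T$ are $\pm c$, so non-degeneracy forces $c$ to be invertible, and $T=(c\vp_1)\wed\vp_2\wed\ldots\wed\vp_k$ is simple.

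The step needing the most care---and the only genuinely super feature---is the bookkeeping in the standard monomial basis, where odd basis vectors may repeat (e.g.\ monomials such as $\vp_{\hat 1}\wed\vp_{\hat 1}$). The wedge-annihilation argument still goes through because I only ever wedge with the \emph{even} vectors $\vp_j$, against which a monomial behaves exactly as in the classical case: it is killed iff $\vp_j$ already occurs among its even factors. A secondary point is that non-degeneracy must be known to be preserved under the even change of basis to the $\vp_i$; this holds because the body of an even invertible supermatrix is block-diagonal and hence preserves the even-index reduction $\bar T$, so that ``some even-index component invertible'' is a basis-independent condition equivalent to $\bar T\ne 0$.
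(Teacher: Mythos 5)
Your proposal is correct and takes essentially the same approach as the paper: the forward direction is identical, and your converse---using non-degeneracy to produce $k$ independent even vectors in $L_T$, completing them to a homogeneous basis, and killing monomial coefficients in the standard basis of $\L^{k}(V)$ to get $T=c\,\vp_1\wedge\ldots\wedge\vp_k$---is the paper's iterative divisibility argument ($T=\up_1\wedge S+R$ with $\up_1\wedge T=0$ forcing $R=0$, repeated) carried out in one pass. The extra details you supply (the explicit contraction vectors $\vp_i$ with their invertible $k\times k$ block of components, the invertibility of $c$, and the basis-independence of non-degeneracy) merely make precise steps the paper asserts or leaves implicit.
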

\begin{proof} 
If $T=\up_1\wed \ldots \wed \up_k$ for   even vectors $\up_1,\ldots,\up_k$, then (since every vector $\wp\in L_T$ is a linear combination of $\up_i$) $\wp\wed T= \sum c^i\up_i \wed \up_\wed \ldots \wed \up_k=0$. Here $\up_i\wed \up_i=0$ as the vectors are even. For the converse, we use the non-degeneracy to observe that  there are at least $k$ independent even vectors in $L_T$. Denote them $\up_1,\ldots,\up_k$. They can be complemented to a basis in  $V$ and we can write $T=\up_1\wed S+R$ where both $S$ and $R$ do not contain $\up_1$. The condition $\up_1\wed T=0$ gives $R=0$. Hence $T$ is divisible by $\up_1$, and the process can be repeated. Eventually $T$ is divisible by $\up_1\wed \ldots \wed \up_k$, and we conclude that   $T= c\, \up_1\wed \ldots \wed \up_k$ for a constant $c$.
\end{proof}

To get a concrete form of the relations for the components $T^{a_1\ldots a_k}$, we use Lemma~\ref{lm.LT} and calculate the wedge product in the equation
\begin{equation}\label{eq.wwedTcompon}
    T^{a_1,\ldots a_{k-1}b}e_b(-1)^{\bt(\at_1+\ldots+\at_{k-1})} \wed T^{c_1\ldots c_k}e_{c_1}\wed \ldots \wed e_{c_k} =0\,.
\end{equation}
We arrive at the following statement.

\begin{theorem}[``super Pl\"ucker relations for $k$-planes in an $n|m$-dimensional space'']
\label{thm.splueckcompon}
An even multivector $T\in \L^k(V)$ in an $n|m$-dimensional superspace $V$ is simple and thus defines a $k$-plane $L\subset V$ if and only if it is non-degenerate and its components~\eqref{eq.Twedge} satisfy the relations
\begin{multline}\label{eq.superpluecksimpl}
    T^{a_1\ldots a_{k-1}b}T^{c_1\ldots c_k}\,(-1)^{\bt(\at_1+\ldots+\at_{k-1}+\ct_1+\ldots+\ct_{k})}
    \\
    =\sum_{j=1}^{k}  T^{a_1\ldots a_{k-1}c_j}T^{c_1\ldots c_{j-1}bc_{j+1}\ldots c_k}\,
    (-1)^{\bt(\ct_1+\ldots+\ct_{j-1}) + \ct_j(\at_1+\ldots+\at_{k-1}+\ct_{j+1}+\ldots+\ct_{k})}
\end{multline}
for all combinations of indices $a_1$, \ldots, $a_{k-1}$, $b$, $c_1$, \ldots, $c_k$.
\end{theorem}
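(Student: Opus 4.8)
The plan is to derive the component relations~\eqref{eq.superpluecksimpl} as a direct transcription into coordinates of the coordinate-free condition $\wp\wed T=0$ furnished by the abstract form of the super Pl\"ucker relations (the preceding theorem). Since that theorem already gives ``$T$ is simple $\iff$ $T$ is non-degenerate and $\wp\wed T=0$ for every $\wp\in L_T$'', and the non-degeneracy clause is identical in both statements, the only thing left to establish is that the system~\eqref{eq.superpluecksimpl}, ranging over all index tuples, is equivalent to the vanishing of $\wp\wed T$ for every $\wp\in L_T$.

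First I would use Lemma~\ref{lm.LT} to replace the quantifier ``for all $\wp\in L_T$'' by a finite spanning set: it suffices to impose $\wp\wed T=0$ for the vectors $\wp=\wp_{(a_1\ldots a_{k-1})}:=T^{a_1\ldots a_{k-1}b}e_b(-1)^{\bt(\at_1+\ldots+\at_{k-1})}$, one for each choice of $a_1,\ldots,a_{k-1}$. This is precisely equation~\eqref{eq.wwedTcompon}, so the whole task reduces to expanding that wedge product into coordinates.

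Next I would realize $\wp\wed T$ as an antisymmetric function of $k+1$ covectors, as in subsection~\ref{subsec.multilin}, and extract its coordinates by evaluating on the dual basis covectors $(e^b,e^{c_1},\ldots,e^{c_k})$; requiring every such value to vanish is equivalent to $\wp\wed T=0$, because a multivector is determined by its values on tuples of dual basis covectors. The computational heart is the contraction formula for wedging a single vector onto a $k$-vector: there is one ``vector slot'' coming from $\wp$, and the $k+1$ covectors are distributed so that exactly one of them occupies that slot while the remaining $k$ are fed to $T$. When $e^b$ occupies the $\wp$-slot, the $\wp$-factor contributes $T^{a_1\ldots a_{k-1}b}$ and the $T$-factor contributes $T^{c_1\ldots c_k}$, producing the left-hand side of~\eqref{eq.superpluecksimpl}; when instead $e^{c_j}$ occupies the $\wp$-slot, so that $e^b$ is fed to $T$ in place of $e^{c_j}$, the two factors become $T^{a_1\ldots a_{k-1}c_j}$ and $T^{c_1\ldots c_{j-1}bc_{j+1}\ldots c_k}$, producing the $j$-th summand on the right. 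Collecting these $k+1$ contributions and moving the $j=0$ term to the other side yields~\eqref{eq.superpluecksimpl}.

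The main obstacle is the sign bookkeeping, which in the super case is entirely nontrivial. Every redistribution of the covectors, every reordering of the wedge factors $e_{c_1}\wed\ldots\wed e_{c_k}$, and each pairing $\langle e_a, e^b\rangle$ carries Koszul signs governed by the sign rule and by the count $\eps(\sigma;\ldots)$ of odd vectors passing one another; moreover the covectors $e^b,e^{c_j}$ may be even or odd, so the signs genuinely depend on the parities $\at_i,\bt,\ct_j$. The delicate step is to verify that the accumulated signs collapse exactly to the exponents $\bt(\at_1+\ldots+\at_{k-1}+\ct_1+\ldots+\ct_k)$ on the left and $\bt(\ct_1+\ldots+\ct_{j-1})+\ct_j(\at_1+\ldots+\at_{k-1}+\ct_{j+1}+\ldots+\ct_k)$ in the $j$-th term on the right. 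I would pin these down by first treating the purely even case, where only the permutation sign $(-1)^\sigma$ survives and the identity reduces to the classical Pl\"ucker derivation, and then reinstating the parity-dependent factors one transposition at a time, keeping track of the extra sign already recorded in Lemma~\ref{lm.LT}. Unlike the classical case, terms in which $b$ coincides with an odd $c_j$ need not vanish, so no summand may be discarded, and the bookkeeping must be carried out for all index combinations simultaneously.
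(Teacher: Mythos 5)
Your strategy is sound and shares its overall skeleton with the paper's proof: both reduce the theorem to the abstract relation $\wp\wed T=0$ imposed on the spanning vectors of $L_T$ supplied by Lemma~\ref{lm.LT}, i.e.\ to expanding equation~\eqref{eq.wwedTcompon}, and both end with the same $k+1$ terms in which $b$ trades places with each $c_j$. Where you genuinely diverge is the extraction step. The paper works on the vector side: since the monomials $e_b\wed e_{c_1}\wed\ldots\wed e_{c_k}$ are not independent over all index tuples, one cannot simply equate coefficients; the paper therefore rewrites each monomial as a $\tfrac{1}{k+1}$-average of the $k+1$ insertions of $e_b$ and renames indices term by term, forcing the coefficients into explicitly (super)antisymmetric form before declaring them zero. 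You instead dualize: realize $\wp\wed T=\Alt(\wp\otimes T)$ as a multilinear function of covectors and evaluate on tuples $(e^b,e^{c_1},\ldots,e^{c_k})$; because $T$ is already antisymmetric, the $(k+1)!$ permutations in the alternation collapse to the $k+1$ distributions of which covector lands in the $\wp$-slot, so the antisymmetrization happens automatically and the paper's renaming trick is sidestepped. Both mechanisms are correct; yours buys a somewhat cleaner conceptual route, at the cost that the Koszul signs now come from evaluation formulas (the pairings $\langle e_a,e^b\rangle$, the sign built into Lemma~\ref{lm.LT}, the count $\eps(\s;\ldots)$) rather than from reordering wedge monomials. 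Two small caveats: you assert rather than carry out the final sign collapse to the exponents in~\eqref{eq.superpluecksimpl} --- your plan (purely even case first, then reinstating parities one transposition at a time) is workable, but this computation is the entire content of the theorem, so it would need to be executed in full; and your extracted relations will a priori differ from~\eqref{eq.superpluecksimpl} by an overall sign depending on the index tuple, which is harmless since each relation is homogeneous, but should be said explicitly.
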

(In~\eqref{eq.superpluecksimpl}, in the $j$th term of the sum the index $b$ is swapped with the index $c_j$.)
\begin{proof}
Though it is a relatively straightforward calculation,
the exact answer and particularly the signs are very important, so we present the calculation in full. We start from equation~\eqref{eq.wwedTcompon} and move $T^{c_1\ldots c_k}$ to the left (note it has parity $\ct_1+\ldots+\ct_k$), obtaining
\begin{equation*}
     T^{a_1,\ldots a_{k-1}b} T^{c_1\ldots c_k}\, (-1)^{\bt(\at_1+\ldots+\at_{k-1})}
      (-1)^{\bt(\ct_1+\ldots+\ct_k)}\, e_b\wed e_{c_1}\wed \ldots \wed e_{c_k}
     =0\,.
\end{equation*}
This is an equality satisfied by a  $(k+1)$-vector and  holding   for all combinations of the free indices $a_1,\ldots a_{k-1}$.  To be able to get rid of the basis wedge products $e_b\wed e_{c_1}\wed \ldots \wed e_{c_k}$ in the expansion, we need  to have   coefficients that are antisymmetric\,---\,in the super sense\,---\,with respect to the indices $b, c_1,\ldots, c_k$. Since there is already   antisymmetry in the indices $c_1,\ldots, c_k$, the required alternation amounts to swapping the index $b$ with each of the indices $c_1,\ldots, c_k$. 
We can proceed as follows. Rewrite $e_b\wed e_{c_1}\wed \ldots \wed e_{c_k}$ identically as the sum
\begin{multline*}
   \frac{1}{k+1}\left(e_b\wed e_{c_1}\wed \ldots \wed e_{c_k}-
  (-1)^{\bt\ct_1} e_{c_1}\wed e_b\wed \ldots \wed e_{c_k}+\ldots + \right.\\
  \left.(-1)^j(-1)^{\bt(\ct_1+\ldots+\ct_{j})} e_{c_1}\wed \ldots\wed e_{c_{j-1}}\wed e_b\wed e_{c_{j+1}}\wed \ldots \wed e_{c_k} +\ldots + \right.\\
  \left.(-1)^k(-1)^{\bt(\ct_1+\ldots+\ct_{k})} e_{c_1}\wed \ldots\wed e_{c_k} \wed e_b\right)\,.
\end{multline*}
Denote for brevity $T^{a_1,\ldots a_{k-1}b}(-1)^{\bt(\at_1+\ldots+\at_{k-1})}$ by $R^b$.
The left-hand side of the above equation becomes the sum of $k+1$ terms  with the common term of the form
\begin{equation*}
    R^b T^{c_1\ldots c_k}\,(-1)^j(-1)^{\bt(\ct_{j+1}+\ldots+\ct_k)}\,
     e_{c_1}\wed \ldots\wed e_{c_{j-1}}\wed e_b\wed e_{c_{j+1}}\wed \ldots \wed e_{c_k}\,.
\end{equation*}
Rename $c_1$ to $b$, $c_2$ to $c_1$, \ldots, $c_j$ to $c_{j-1}$, and $b$ to $c_j$, leaving the indices $c_{j+1}$, \ldots, $c_k$ unchanged. This gives the common term as
\begin{equation*}
    R^{c_j} T^{bc_1\ldots c_{j-1}c_{j+1}\ldots c_k}\,
    (-1)^j(-1)^{\ct_j(\ct_{j+1}+\ldots+\ct_k)}e_b\wed  e_{c_1}\wed \ldots \wed e_{c_k}
\end{equation*}
where the coefficients have acquired the alternating form. Hence our equation  is equivalent to
\begin{equation*}
    R^b T^{c_1\ldots c_k}\,(-1)^{\bt(\ct_1+\ldots+\ct_k)}
    +\sum_{j=1}^k
    R^{c_j} T^{bc_1\ldots c_{j-1}c_{j+1}\ldots c_k}\,
    (-1)^j(-1)^{\ct_j(\ct_{j+1}+\ldots+\ct_k)} =0\,.
\end{equation*}
Finally, we can move the index $b$ in $T^{bc_1\ldots c_{j-1}c_{j+1}\ldots c_k}$ to the $j$th place. This   gives  the sign $(-1)^{j-1}(-1)^{\bt(\ct_1+\ldots +\ct_{j-1})}$ and we obtain
\begin{equation*}
    R^b T^{c_1\ldots c_k}\,(-1)^{\bt(\ct_1+\ldots+\ct_k)}
    -
    \sum_{j=1}^k
    R^{c_j} T^{c_1\ldots c_{j-1}bc_{j+1}\ldots c_k}\,
    (-1)^{\bt(\ct_1+\ldots +\ct_{j-1})}(-1)^{\ct_j(\ct_{j+1}+\ldots+\ct_k)} =0\,,
\end{equation*}
By substituting back $R^b=T^{a_1,\ldots a_{k-1}b}(-1)^{\bt(\at_1+\ldots+\at_{k-1})}$, we arrive at
\begin{multline*}
    T^{a_1,\ldots a_{k-1}b} T^{c_1\ldots c_k}\,(-1)^{\bt(\at_1+\ldots+\at_{k-1}+\ct_1+\ldots+\ct_k)}
    \\
   - \sum_{j=1}^k
    T^{a_1,\ldots a_{k-1}c_j}
    T^{c_1\ldots c_{j-1}bc_{j+1}\ldots c_k}\,
    (-1)^{\ct_j(\at_1+\ldots+\at_{k-1})}
    (-1)^{\bt(\ct_1+\ldots +\ct_{j-1})}(-1)^{\ct_j(\ct_{j+1}+\ldots+\ct_k)} =0\,,
\end{multline*}
as claimed.
\end{proof}

\subsection{Examples. Analysis of the super Pl\"{u}cker relations} \label{subsec.ex}

Theorem~\ref{thm.splueckcompon} completely describes the image of the ``super Pl\"{u}cker map'' \eqref{eq.simpleplueck}. In order to obtain a full analogy with the classical theory, we need in addition to show that the map given by \eqref{eq.simpleplueck} is invertible (on its image). We shall do that later and now can just assume this is a fact. (See Theorem~\ref{thm.embedalg} in the next section.) Then  we can conclude that, exactly as classical, in the algebraic case the super Grassmann manifold $G_k(V)$ is embedded as a (super) algebraic variety in the projective superspace $P(\L^k(V))$ by the ``super Pl\"{u}cker map'' \eqref{eq.simpleplueck} and is specified there by the ``super Pl\"{u}cker relations''~\eqref{eq.superpluecksimpl}.

Note that Theorem~\ref{thm.splueckcompon} also includes the  non-degeneracy 
as a condition additional to the algebraic relations. We will show that this extra condition can be dropped (at least for $k=2$, see Theorem~\ref{thm.reducplueck} below; but we believe it is true in general).

Recall first that  for $\dim V=n|m$,
\begin{align*}
    \dim \L^k(V)&=  \sum_{i=0}^k \binom{n}{k-i}\binom{m+i-1}{i}\Pi^i \\
    & =\Bigl. \sum_{0\leq i\leq k\,, \ \text{$i$ even}}  \binom{n}{k-i}\binom{m+i-1}{i}\ \Bigr|\,\sum_{0\leq i\leq k\,, \ \text{$i$ odd}} \binom{n}{k-i}\binom{m+i-1}{i}
\end{align*}
and $\dim P(\L^k(V))=\dim \L^k(V) -1$, while
\begin{equation*}
    \dim G_{k}(V) =k(n-k)    |km\,.
\end{equation*}
The simplest situation when   relations should occur is (as in the classical setup) when $k=2$. Till the end of the subsection we will be concerned with this case.  We have for it
\begin{equation*}
    \dim P(\L^2(V))=\Bigl.\frac{n(n-1)}{2}+ \frac{m(m+1)}{2}-1\;\Bigr|\;nm
\end{equation*}
and
\begin{equation*}
    \dim G_{2}(V) =2(n-2)    \,|\,2m\,.
\end{equation*}
Therefore (if $m>0$) relations occur already for $n=2$, i.e. for $2$-planes in the $2|m$-space! Let us put down some values of $\d:=\dim P(\L^2(V))-\dim G_{2}(V)$. We have
\begin{equation}\label{eq.codim}
    \d =\Bigl.\frac{n(n-1)}{2}-1-2(n-2)+ \frac{m(m+1)}{2}\;\Bigr|\;(n-2)m\,,
\end{equation}
so
\begin{align*}
    n=2: \qquad \d&= \Bigl.\frac{m(m+1)}{2}\;\Bigr|\;0\\
    n=3: \qquad \d&= \Bigl.\frac{m(m+1)}{2}\;\Bigr|\;m\\
    n=4: \qquad \d&= \Bigl.1+\frac{m(m+1)}{2}\;\Bigr|\;2m\\
    n=5: \qquad \d&= \Bigl.3+\frac{m(m+1)}{2}\;\Bigr|\;3m\\
\end{align*}
The ``regular case'' starts from $n=4$, while $n=2$ and $n=3$ are   two special cases. Note that $\d$ gives the number of independent relations,
while the actual number of the (super) Pl\"{u}cker relations can be greater   because they are not in  general independent.

Consider examples of super Pl\"{u}cker relations.

\begin{example} \label{ex.ktwo}Let $k=2$. For an even bivector $T=T^{ab}e_a\wed e_b$, the ``super Pl\"ucker'' relations will be
 \begin{equation}\label{eq.plueckfor2}
    T^{ab}T^{cd}(-1)^{\bt(\at+\ct+\dt)} =
    T^{ac}T^{bd}(-1)^{\ct(\at+ \dt)} +
    T^{ad}T^{cb}(-1)^{\bt\ct + \at\dt}\,.
 \end{equation}
 Note that here $\widetilde{T^{ab}}=\at+\bt$ and $T^{ab}=-(-1)^{\at\bt}T^{ba}$.
\end{example}

One can  see that a permutation of indices $a,b,c,d$ will not change the relation~\eqref{eq.plueckfor2} for given $a,b,c,d$. This follows from the construction of relations in the previous subsection, but also can be checked directly. (See Example~\ref{ex.ktwocont} below, where it will become clear.) In order to analyze~\eqref{eq.plueckfor2} for all possible combinations of indices, it is convenient to express $T$ in a more detailed way as
\begin{multline}\label{eq.Tgeneral}
    T=T^{ab}e_a\wed e_b\\
    =2\sum_{{a<b\,,}\atop{a,b=1,\ldots,n}} T^{ab}e_a\wed e_b + 2 \sum_{{a=1,\ldots,n\,,}\atop{\mu=1,\ldots,m}}\t^{a\mu}e_a\wed \e_{\mu} +
    \sum_{\la =1,\ldots,m} S^{\la \la}\e_{\la}\wed \e_{\la}+
    2\sum_{{\la< \mu\,,}\atop{\la,\mu=1,\ldots,m}} S^{\la \mu}\e_{\la}\wed \e_{\mu}\,.
\end{multline}
In the first line of~\eqref{eq.Tgeneral}, the summation is over all combinations of indices $a,b$ that run over $\{1,\ldots,n\}\sqcup \{\hat 1,\ldots,\hat m\}$. In the second line, we separate even and odd indices so that in the first term  the values of $a,b$ are restricted to $\{1,\ldots,n\}$ and we use $\la,\mu\in \{1,\ldots,m\}$. Here   $\e_{\mu}:=e_{\hat \mu}$, $\t^{a\mu}:=T^{a\hat \mu}$ and $S^{\la \mu}:=T^{\hat\la \hat\mu}$. The components $T^{ab}$, $a,b=1,\ldots,n$, and $S^{\la \mu}$ are even. The components $\t^{a\mu}$ are odd.
Compare~\eqref{eq.T} in Example~\ref{ex.2in4.1}.

\begin{example}[continued] \label{ex.ktwocont}
Let us analyze the relations obtained in Example~\ref{ex.ktwo}. Consider the cases when all indices $a,b,c,d$ are even; when only one index is odd, $d=\hat\mu$; when two indices are odd, $c=\hat\la$ and $d=\hat\mu$; when three indices are odd, $b=\nu$, $c=\hat\la$ and $d=\hat\mu$; and when all four indices are odd, $a=\hat \kappa$, $b=\nu$, $c=\hat\la$ and $d=\hat\mu$. Altogether the full set of  the super Pl\"{u}cker
relations for the variables $T^{ab}$, $S^{\la\mu}$ (even) and $\t^{a\la}$ (odd) reads:
\begin{align}
\label{eq.plueck.ev}
    T^{ab}T^{cd} &= T^{ac}T^{bd}  + T^{ad}T^{cb}  \\
    \intertext{(which is the   ordinary  Pl\"{u}cker relation for the even variables $T^{ab}$), and}
\label{eq.plueck.eeeo}
    T^{ab}\t^{c\mu} &= T^{ac}\t^{b\mu}  + T^{cb}\t^{a\mu}\,, \\
\label{eq.plueck.eeoo}
    T^{ab}S^{\la\mu} &= - \t^{a\la}\t^{b\mu}  - \t^{a\mu}\t^{b\la}\,, \\
\label{eq.plueck.eooo}
    \t^{a\nu}S^{\la\mu} &= - \t^{a\la}S^{\mu\nu}  - \t^{a\mu}S^{\la\nu}\,, \\
\label{eq.plueck.oooo}
    S^{\kappa\nu}S^{\la\mu} &= - S^{\kappa\la}S^{\mu\nu}  - S^{\kappa\mu}S^{\la\nu}\,.
\end{align}
Here $a,b,c,d=1,\ldots,n$ and $\kappa,\la,\mu,\nu=1,\ldots,m$. Relations~\eqref{eq.plueck.ev}, \eqref{eq.plueck.eeoo}  and~\eqref{eq.plueck.oooo} are even, while relations~\eqref{eq.plueck.eeeo},  \eqref{eq.plueck.eooo} are odd.
\end{example}

A further specialization of Examples~\ref{ex.ktwo} and~\ref{ex.ktwocont} is the simplest  super case  $m=1$.

\begin{example}[continued] \label{ex.ktwocontmone}
For $m=1$,   set $\t^{a}:=\t^{a1}$ and $s:=S^{11}$. Relations~\eqref{eq.plueck.eeeo}, \eqref{eq.plueck.eeoo}, \eqref{eq.plueck.eooo}  and~\eqref{eq.plueck.oooo}  become
\begin{align}
\label{eq.plueck.ttheta}
    T^{ab}\t^{c} &= T^{ac}\t^{b}  + T^{cb}\t^{a}\,, \\
\label{eq.plueck.ts}
    T^{ab}s &= - 2\t^{a}\t^{b}\,, \\
\label{eq.plueck.thetas}
    \t^{a}s &= 0\,, \\
\label{eq.plueck.skw}
    s^2 &= 0\,.
\end{align}
The nilpotence relation $s^2=0$ is very important. Recall that we assume that at least one even component of the bivector $T$ is invertible. Since $s$ cannot be invertible by~\eqref{eq.plueck.skw}, we obtain that one of $T^{ab}$, $a,b=1,\ldots,n$, must be invertible, i.e. in other words,    the non-degeneracy of $T$. Furthermore,   we can analyze the relations by considering them in a domain  in which a particular  variable  $T^{ab}$ is invertible (since such domains cover the image of the super Grassmannian in $P(\L^2(V))$). We can express
\begin{align}\label{eq.thetacviathetaab}
    \t^c&=\frac{1}{T^{ab}}(T^{ac}\t^b+T^{cb}\t^a)
\intertext{and}
\label{eq.sviathetaab}
    s&=-\frac{2}{T^{ab}}\t^a\t^b\,.
\end{align}
Note that~\eqref{eq.thetacviathetaab} holds for all $c$ including $a$ and $b$; however, for $c=a$ or $c=b$, the relation~\eqref{eq.thetacviathetaab} becomes $\t^a=\t^a$ or $\t^b=\t^b$, respectively. Relations~\eqref{eq.sviathetaab} and~\eqref{eq.thetacviathetaab} (for $c\neq a,b$) are independent. Since there are exactly $1|n-2$ of them, we may think that they give extra independent relations in addition to independent relations on $T^{ab}$ so to obtain the number in~\eqref{eq.codim} for $m=1$.

We can show that formulas~\eqref{eq.thetacviathetaab} and~\eqref{eq.sviathetaab}, with $a,b$ fixed, taken together with~\eqref{eq.plueck.ev} imply the full set of relations~\eqref{eq.plueck.ttheta}, \eqref{eq.plueck.ts}, \eqref{eq.plueck.thetas} and \eqref{eq.plueck.skw}. Indeed, consider~\eqref{eq.plueck.ttheta} where instead of $a,b$ we have some $p,q$\,:
\begin{equation*}
    T^{pq}\t^c=T^{pc}\t^q+T^{cq}\t^p\,.
\end{equation*}
By substituting $\t^c,\t^p,\t^q$ from~\eqref{eq.thetacviathetaab} and getting rid of the denominator, we obtain for the left-hand side $T^{pq}(T^{ac}\t^b+T^{cb}\t^a)$ and for the right-hand side, $T^{pq}(T^{ac}\t^b+T^{qb}\t^a)+T^{cq}(T^{ap}\t^b+T^{pb}\t^a)$. Comparing the coefficients at $\t^a$ and $\t^b$ gives us the classical Pl\"{u}cker relations, viz. $T^{pq}T^{ac}=T^{pc}T^{aq}+T^{cq}T^{ap}$ and $T^{pq}T^{cb}=T^{pc}T^{qb}+T^{cq}T^{pb}$\,. Hence~\eqref{eq.plueck.ttheta} holds for arbitrary combinations of indices. Next we need to establish
\begin{equation*}
    T^{pq}s=-2\t^p\t^q
\end{equation*}
for arbitrary $p$ and $q$. By substituting $s$ from~\eqref{eq.sviathetaab} and $\t^p$, $\t^q$ from~\eqref{eq.thetacviathetaab} and getting rid of the denominator $(T^{ab})^2$ and the common factor   $-2$, we arrive at a relation $T^{pq}T^{ab}\t^a\t^b=(T^{ap}\t^b+T^{pb}\t^a)(T^{aq}\t^b+T^{qb}\t^a)$, equivalent to $T^{pq}T^{ab}=-T^{ap}T^{qb}+T^{pb}T^{aq}$, which is again the classical Pl\"{u}cker relation. Hence we get~\eqref{eq.plueck.ts} (for arbitrary indices). Further, we need the relation
\begin{equation*}
    \t^c s=0
\end{equation*}
for arbitrary $c$. This follows from~\eqref{eq.sviathetaab} and~\eqref{eq.thetacviathetaab} directly (by vanishing of the product of $\t^a\t^b$ with a linear combination of $\t^a$ and $\t^b$). Finally, the formula~\eqref{eq.sviathetaab} for $s$ implies $s^2=0$.
\end{example}

Analysis performed in Example~\ref{ex.ktwocontmone} for $k=2$ and $n|m=n|1$ tells that in this case, the condition of  non-degeneracy in Theorem~\ref{thm.splueckcompon} can be dropped, because it follows from the algebraic  relations themselves; and also that the full set of  super Pl\"{u}cker relations~\eqref{eq.plueck.ev}, \eqref{eq.plueck.ttheta}, \eqref{eq.plueck.ts}, \eqref{eq.plueck.thetas} and \eqref{eq.plueck.skw} can be effectively reduced to   two relations~\eqref{eq.plueck.ev}, \eqref{eq.plueck.ttheta} that involve only part of the variables.

We will show that this holds for  general $n|m$   when $k=2$. (In the next Section~\ref{sec.general}, we will show by a different method how this subset of ``essential'' Pl\"{u}cker variables and the corresponding ``reduced'' set of  super Pl\"{u}cker relations arise for arbitrary $k$. See Theorem~\ref{thm.essplueckr0}.)

\begin{theorem}[``reduced super Pl\"{u}cker relations for $k=2$ in $n|m$-space]
\label{thm.reducplueck}
The super Pl\"{u}cker relations 
for the components $T^{ab}$, $\t^{a\la}$ and $S^{\la\mu}$ of an even bivector $T$,
where it is assumed that at least one even component is invertible, imply that all  the variables $S^{\la\mu}$ are nilpotent and hence that at least one of the variables $T^{ab}$ is invertible (i.e. that the bivector $T$ is non-degenerate). The  algebra generated by $T^{ab}$, $\t^{a\la}$ and $S^{\la\mu}$ is isomorphic to the algebra generated by $T^{ab}$, $\t^{a\la}$ alone subject to the relations 
\begin{align}
\label{eq.plueck.even}
    T^{ab}T^{cd} &= T^{ac}T^{bd}  + T^{ad}T^{cb}\,,  \\
\label{eq.plueck.odd}
    T^{ab}\t^{c\mu} &= T^{ac}\t^{b\mu}  + T^{cb}\t^{a\mu}
\end{align}
(which we shall refer to as the `reduced super Pl\"{u}cker relations').
\end{theorem}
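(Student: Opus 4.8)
The plan is to prove the theorem in three stages, generalizing to arbitrary $m$ the computation already carried out for $m=1$ in Example~\ref{ex.ktwocontmone}. First I would show directly from the relations that every even component $S^{\la\mu}$ is nilpotent; this forces the distinguished invertible even component to be one of the $T^{ab}$, which is precisely the non-degeneracy of $T$. Second, I would use relation \eqref{eq.plueck.eeoo} to \emph{solve} for the $S^{\la\mu}$ in terms of the odd variables $\t^{a\la}$ once a single $T^{a_0b_0}$ has been inverted. Third, I would package this elimination as a pair of mutually inverse homomorphisms between the full algebra and the reduced one.

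For nilpotence, note first that since the indices $\hat\la,\hat\mu$ are both odd, the antisymmetry $T^{ab}=-(-1)^{\at\bt}T^{ba}$ gives $S^{\la\mu}=S^{\mu\la}$, so $S$ is symmetric with mutually commuting even entries. Relation \eqref{eq.plueck.oooo} then reads, for four indices $i,j,k,l$, as the vanishing of the sum over the three perfect matchings, $S^{il}S^{jk}+S^{ij}S^{kl}+S^{ik}S^{jl}=0$. Setting all four indices equal gives $3(S^{\la\la})^2=0$, hence $(S^{\la\la})^2=0$; setting $i=j=\al$, $k=l=\be$ gives $2(S^{\al\be})^2+S^{\al\al}S^{\be\be}=0$. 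Squaring the latter and using $(S^{\al\al})^2=(S^{\be\be})^2=0$ yields $4(S^{\al\be})^4=0$, so $(S^{\al\be})^4=0$ for all $\al,\be$. Thus no $S^{\la\mu}$ is invertible, and since by hypothesis some even component is invertible, it must be one of the $T^{ab}$.

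For the isomorphism, fix an invertible $T^{a_0b_0}$ and define, in the algebra generated by the $T^{ab}$ and $\t^{a\la}$ subject only to \eqref{eq.plueck.even} and \eqref{eq.plueck.odd},
\[
S^{\la\mu}:=-(T^{a_0b_0})^{-1}\bigl(\t^{a_0\la}\t^{b_0\mu}+\t^{a_0\mu}\t^{b_0\la}\bigr)\,.
\]
The obvious map $T\mapsto T$, $\t\mapsto\t$ sends the reduced algebra into the full one (its relations \eqref{eq.plueck.even}, \eqref{eq.plueck.odd} coincide with \eqref{eq.plueck.ev}, \eqref{eq.plueck.eeeo}), and in the full algebra \eqref{eq.plueck.eeoo} together with invertibility of $T^{a_0b_0}$ reproduces the displayed formula, so the two maps will be mutually inverse \emph{as soon as} the displayed $S^{\la\mu}$ is shown to satisfy the three relations \eqref{eq.plueck.eeoo}, \eqref{eq.plueck.eooo}, \eqref{eq.plueck.oooo} using only \eqref{eq.plueck.even} and \eqref{eq.plueck.odd}. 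The first task is to upgrade the definition to \eqref{eq.plueck.eeoo} for \emph{all} indices, i.e. to prove $T^{pq}(\t^{a_0\la}\t^{b_0\mu}+\t^{a_0\mu}\t^{b_0\la})=T^{a_0b_0}(\t^{p\la}\t^{q\mu}+\t^{p\mu}\t^{q\la})$; this follows by substituting \eqref{eq.plueck.odd} and comparing coefficients, exactly as the reduction to the classical relation in Example~\ref{ex.ktwocontmone}, and it shows in particular that the construction is independent of the chart $(a_0b_0)$.

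The main obstacle is the last relation \eqref{eq.plueck.oooo}: after substitution it becomes a quartic, sign-sensitive identity in the odd variables $\t^{a\la}$, and the bookkeeping must be organized so that it collapses onto the already-established even relation \eqref{eq.plueck.even}. The payoff of having first proved \eqref{eq.plueck.eeoo} for all indices is that one may peel off the two factors of $S$ one at a time against two different invertible $T$'s, keeping each intermediate expression at most quadratic in $\t$; the same device handles the cubic relation \eqref{eq.plueck.eooo}. This mirrors, one degree higher, the $m=1$ argument, and the resulting essential relations are independently confirmed, for all $k$, by the general method of Section~\ref{sec.general} (Theorem~\ref{thm.essplueckr0}).
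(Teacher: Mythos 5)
Your outline reproduces the paper's proof in all essentials: nilpotence of the $S^{\la\mu}$ is extracted from \eqref{eq.plueck.oooo} by exactly the same two specializations, the $S^{\la\mu}$ are then eliminated in a chart where some $T^{a_0b_0}$ is invertible, and one checks that \eqref{eq.plueck.eeoo} (for all indices), \eqref{eq.plueck.eooo} and \eqref{eq.plueck.oooo} follow from \eqref{eq.plueck.even} and \eqref{eq.plueck.odd} --- the very computations the paper writes out. (The paper additionally solves for all $\t^{c\mu}$ in terms of the two distinguished rows $\t^{a\mu},\t^{b\mu}$ and therefore also re-verifies \eqref{eq.plueck.eeeo} for general indices, a step you avoid by keeping \eqref{eq.plueck.odd} among the defining relations of the reduced algebra; this is only a presentational difference.) Two remarks on details. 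First, your nilpotence step is in fact more careful than the paper's: from $3(S^{\la\la})^2=0$ and $2(S^{\al\be})^2=-S^{\al\al}S^{\be\be}$ the correct conclusion is $(S^{\al\be})^4=0$, as you state; the paper's assertion that $(S^{\la\mu})^2=0$ for all $\la,\mu$ is an overstatement, since off the diagonal $(S^{\al\be})^2=-\frac{1}{2}S^{\al\al}S^{\be\be}$ is a product of four odd generators that need not vanish --- but plain nilpotence is all the theorem uses, so nothing is lost. Second, your sketch of the last two verifications is slightly miscalibrated: only the single component $T^{a_0b_0}$ is known to be invertible, so one cannot ``peel off the two factors of $S$ against two different invertible $T$'s''; instead one multiplies \eqref{eq.plueck.eooo} and \eqref{eq.plueck.oooo} through by $T^{a_0b_0}$ and $(T^{a_0b_0})^2$ respectively, substitutes the displayed formula, and finds (as the paper does) that the resulting cubic and quartic expressions in the $\t$'s cancel \emph{identically} --- no appeal to \eqref{eq.plueck.even} is needed at that stage; the classical relation enters only in upgrading \eqref{eq.plueck.eeoo} to general $p,q$ (and, in the paper's version, in re-deriving \eqref{eq.plueck.eeeo}). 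With these adjustments your plan is sound and coincides with the paper's argument.
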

\begin{proof}
Consider $T^{ab}$, $\t^{a\la}$ and $S^{\la\mu}$ satisfying the relations~\eqref{eq.plueck.ev}, \eqref{eq.plueck.eeeo},  \eqref{eq.plueck.eeoo},  \eqref{eq.plueck.eooo}  and~\eqref{eq.plueck.oooo}. We first show that they imply the nilpotence of $S^{\la\mu}$ for each $\la,\mu$. Indeed, consider equation~\eqref{eq.plueck.oooo} with $\kappa\la=\mu\nu$, which gives
\begin{equation*}
    S^{\mu\nu}S^{\nu\mu}  = - S^{\mu\nu}S^{\mu\nu}  - S^{\mu\mu}S^{\nu\nu}
\end{equation*}
or (since $S^{\mu\nu}=S^{\nu\mu}$)
\begin{equation*}
    2(S^{\mu\nu})^2=- S^{\mu\mu}S^{\nu\nu}\,.
\end{equation*}
In particular, if $\mu=\nu$, we get $(S^{\mu\mu})^2=0$.
Hence $(S^{\la\mu})^2=0$ for all $\la,\mu$. So the variables $S^{\la\mu}$ cannot be invertible; therefore one of the variables $T^{ab}$ must be invertible. Fix $a,b$. In the domain where $T^{ab}$ is invertible, we can express
\begin{equation*}
    \t^{c\mu}=\frac{1}{T^{ab}}(T^{ac}\t^{b\mu}+T^{cb}\t^{a\mu}) \qquad \text{and}\qquad
    S^{\la\mu}=-\frac{1}{T^{ab}}(\t^{a\la}\t^{b\mu}+\t^{a\mu}\t^{b\la})
\end{equation*}
(automatically there will be $S^{\la\mu}=S^{\mu\la}$). This holds for all $\la,\mu,c$.
We shall show now that if $\t^{c\mu}$ and $S^{\la\mu}$ are given by these formulas for all $\la,\mu=1,\ldots,m$ (without placing any restrictions on the variables $\t^{a\mu}$ and $\t^{b\mu}$  and assuming that $T^{pq}$ for all $p,q=1,\ldots,n$ satisfy the classical Pl\"{u}cker relations), then the full set of super Pl\"{u}cker relations 
is satisfied. Consider the relation
\begin{equation*}
    T^{pq}\t^{c\mu}=T^{pc}\t^{q\mu} + T^{cq}\t^{p\mu}\,,
\end{equation*}
for arbitrary $p,q,c$ and $\mu$, which we need to establish. Substitute $\t^{c\mu}, \t^{q\mu}, \t^{p\mu}$ from the formulas above. After the multiplication by the denominator $T^{ab}$, this gives the relation
\begin{equation*}
    T^{pq}(T^{ac}\t^{b\mu}+ T^{cb}\t^{a\mu})=
    T^{pc}(T^{aq}\t^{b\mu}+ T^{qb}\t^{a\mu}) +
    T^{cq}(T^{ap}\t^{b\mu}+ T^{pb}\t^{a\mu})\,,
\end{equation*}
which is identically satisfied by the virtue of the classical Pl\"{u}cker relations for $T^{pq}$.
Consider now another relation to be established:
\begin{equation*}
    T^{pq}S^{\la\mu}=-\t^{p\la}\t^{q\mu} - \t^{p\mu}\t^{q\la}\,.
\end{equation*}
Again substitute the expressions for $S^{\la\mu}$, $\t^{p\la}$, etc. from the above. This will give after the multiplication by $-(T^{ab})^2$ the relation
\begin{multline*}
     T^{pq}T^{ab}(\t^{a\la}\t^{b\mu}+\t^{a\mu}\t^{b\la})
    = (T^{ap}\t^{b\la}+T^{pb}\t^{a\la})(T^{aq}\t^{b\mu}+T^{qb}\t^{a\mu}) \\
    + (T^{ap}\t^{b\mu}+T^{pb}\t^{a\mu})(T^{aq}\t^{b\la}+T^{qb}\t^{a\la})\,,
\end{multline*}
where in the right-hand side after opening the brackets the terms with $\t^{a\la}\t^{a\mu}$ and $\t^{b\la}\t^{b\mu}$ will cancel and the remaining terms assemble into $(T^{pa}T^{qb}+T^{pb}T^{aq})(\t^{a\la}\t^{b\mu}+\t^{a\mu}\t^{b\la})$, which equals the left-hand side by by the virtue of the classical Pl\"{u}cker relation. The next relation that we need to establish is
\begin{equation*}
    \t^{c\nu}S^{\la\mu}=-\t^{c\la}S^{\mu\nu} - \t^{c\mu}S^{\la\nu}\,.
\end{equation*}
After the substitution and getting rid of the denominator, we obtain
$(T^{ac}\t^{b\nu}+T^{cb}\t^{a\nu})(\t^{a\la}\t^{b\mu}+ \t^{a\mu}\t^{b\la})$
for the left-hand side
and $-(T^{ac}\t^{b\la}+T^{cb}\t^{a\la})(\t^{a\mu}\t^{b\mu}+\t^{a\nu}\t^{b\mu})$
for the right-hand side. By multiplying through and simplification, the right-hand side will give
\begin{multline*}
    -T^{ac}\left(\t^{b\la}(\t^{a\mu}\t^{b\nu}+\underline{\t^{a\nu}\t^{b\mu}})+
    \t^{b\mu}(\t^{a\la}\t^{b\nu}+\underline{\t^{a\nu}\t^{b\la}})\right)-\\
    T^{cb}\left(\t^{a\la}(\underline{\t^{a\mu}\t^{b\nu}}+\t^{a\nu}\t^{b\mu})
    +\t^{a\mu}(\underline{\t^{a\la}\t^{b\nu}}+\t^{a\nu}\t^{b\la})\right)=\\
    -T^{ac}\t^{b\nu}(\t^{b\la}\t^{a\mu}+\t^{b\mu}\t^{a\la})
    +T^{cb}\t^{a\nu}(\t^{a\la}\t^{b\mu}+\t^{a\mu}\t^{b\la})=
    (T^{ac}\t^{b\nu} + T^{cb}\t^{a\nu})(\t^{a\la}\t^{b\mu}+\t^{a\mu}\t^{b\la})\,,
\end{multline*}
which is exactly the left-hand side
(underlined are the terms that cancel). Hence this identity is proved.
And finally we need to check the relation~\eqref{eq.plueck.oooo}, which can be written as
\begin{equation*}
    S^{\kappa\nu}S^{\la\mu}  +S^{\kappa\la}S^{\mu\nu}  + S^{\kappa\mu}S^{\la\nu}=0\,,
\end{equation*}
where  everything is put to one side. After substitution and multiplying by the denominator, we have
\begin{multline*}
    (\t^{a\kappa}\t^{b\la}+\t^{a\la}\t^{b\kappa})(\t^{a\mu}\t^{b\nu}+\t^{a\nu}\t^{b\mu})+
    (\t^{a\kappa}\t^{b\mu}+\t^{a\mu}\t^{b\kappa})(\t^{a\la}\t^{b\nu}+\t^{a\nu}\t^{b\la})+\\
    (\t^{a\kappa}\t^{b\nu}+\t^{a\nu}\t^{b\kappa})(\t^{a\la}\t^{b\mu}+\t^{a\mu}\t^{b\la})\,,
\end{multline*}
and after opening the brackets all the terms in the long sum will cancel. Thus the theorem is fully proved.
\end{proof}

The statement of Theorem~\ref{thm.reducplueck} can be given a geometric interpretation as follows. The homogeneous coordinate ring of the super Grassmannian $G_2(V)$ with $\dim V=n|m$ is generated by even variables $T^{ab}=-T^{ba}$ and odd variables $\t^{a\mu}$, $a,b=1,\ldots,n$, $\mu=1,\ldots,m$,  satisfying the relations~\eqref{eq.plueck.even} and \eqref{eq.plueck.odd}.  Equation~\eqref{eq.plueck.even} being the classical Pl\"{u}cker relation describes the underlying ordinary Grassmannian $(G_2(V))_0=G_2(V_0)$, $\dim V_0=n$. What is the meaning of  the odd equation~\eqref{eq.plueck.odd}?
\begin{theorem}
Equation~\eqref{eq.plueck.odd} for each $\mu=1,\ldots,m$, specifies the parity-reversed tautological subspace in $V_0$ for each point of the Grassmannian $G_2(V_0)$. Thus the relations~\eqref{eq.plueck.even} and \eqref{eq.plueck.odd} together describe the  super Grassmannian $G_2(V)$ as the (super) vector bundle $\Pi(\underbrace{E\oplus\ldots \oplus E}_{\text{$m$ times}})$ over the ordinary Grassmannian $G_2(V_0)$, where $E\to G_2(V_0)$ is the tautological bundle.
\end{theorem}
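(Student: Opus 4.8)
The plan is to read the two families of relations geometrically: the even relations \eqref{eq.plueck.even} recover the base $G_2(V_0)$ together with its tautological $2$-plane, while the odd relations \eqref{eq.plueck.odd} say precisely that each of the $m$ odd vectors assembled from the $\t^{a\mu}$ lies, after parity reversal, in that $2$-plane. First I would fix notation: for a point of the Grassmannian let $T=(T^{ab})$, $a,b=1,\dots,n$, be the even Pl\"ucker coordinates, which by \eqref{eq.plueck.even} satisfy the classical Pl\"ucker relations and hence (together with the non-degeneracy established in Theorem~\ref{thm.reducplueck}) determine a $2$-plane $L_0=L_T\subset V_0$ by Lemma~\ref{lm.LT}; this $L_0$ is the fiber over $[L_0]$ of the tautological bundle $E\to G_2(V_0)$. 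For each fixed $\mu\in\{1,\dots,m\}$ I collect the odd scalars into a single odd vector $\t^{\mu}:=\t^{a\mu}e_a\in\Pi V_0$.

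The key step is a rewriting of \eqref{eq.plueck.odd}. Using $T^{cb}=-T^{bc}$ and commuting the even $T$'s past the odd $\t$'s, \eqref{eq.plueck.odd} becomes
\begin{equation*}
    T^{ab}\,\t^{c\mu}-T^{ac}\,\t^{b\mu}+T^{bc}\,\t^{a\mu}=0
\end{equation*}
for all $a,b,c$, which is exactly the component form of the single vector equation $\t^{\mu}\wed T=0$ in $\L^3(V_0)$. In the rank-$2$ case the map $w\mapsto w\wed T$ from $V_0$ to $\L^3(V_0)$ has kernel precisely $L_0=L_T$ (the classical membership criterion; one direction is the even content of the ``abstract form'' of the relations proved above, $\wp\wed T=0$ for $\wp\in L_T$). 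This criterion is $\R$-linear in $w$, so tensoring with the odd line carries its kernel to $\Pi L_0$; hence the solution set of \eqref{eq.plueck.odd} in the odd unknowns $\t^{a\mu}$ is exactly the condition $\t^{\mu}\in\Pi L_0$. This proves the first assertion: for each fixed $\mu$, \eqref{eq.plueck.odd} cuts out the parity-reversed tautological subspace.

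To assemble the bundle I would work in the standard affine chart indexed by a pair of even columns $(a,b)$ with $T^{ab}$ normalized, where $L_0$ is spanned by the two rows $\up_1,\up_2$ of the normalized coordinate matrix, giving a local frame of $E$. An odd vector $\t^{\mu}\in\Pi L_0$ is then determined by its two components $\t^{a\mu},\t^{b\mu}$, the remaining $\t^{c\mu}$ being recovered from \eqref{eq.plueck.odd} (the generalization of \eqref{eq.thetacviathetaab}); thus $(\t^{a\mu},\t^{b\mu})_{\mu=1,\dots,m}$ furnish $2m$ free odd fiber coordinates, matching $\dim G_2(V)=2(n-2)\,|\,2m$ from Example~\ref{ex.g20}. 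Under a change of chart the spanning pair $\up_1,\up_2$ transforms by the tautological-bundle transition matrix while these fiber coordinates transform contragrediently, i.e. exactly as for $\Pi(E^{\oplus m})$; collecting the $m$ copies indexed by $\mu$ identifies $G_2(V)$ with the total space of $\Pi(\,E\oplus\dots\oplus E\,)$ over $G_2(V_0)$.

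I expect the main obstacle to be this last, gluing, step rather than the pointwise computation: one must verify that the identification is genuinely an isomorphism of supermanifolds, i.e. that the odd fiber coordinates transform linearly under the induced tautological transition functions (so that $\Pi E$ is a bona fide vector bundle and not a merely fiberwise identification), and that the projective scaling ambiguity in the homogeneous coordinates $(T^{ab},\t^{a\mu})$ is absorbed consistently into the trivializations over the affine charts of $G_2(V_0)$. The sharpness of the membership criterion in rank $2$ --- that \eqref{eq.plueck.odd} cuts the $n$-dimensional odd space down to \emph{exactly} the $2$-dimensional $\Pi L_0$ --- is what guarantees the correct fiber rank and is precisely where the hypothesis $k=2$ enters.
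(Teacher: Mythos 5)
Your proposal is correct and follows essentially the same route as the paper: the even relations \eqref{eq.plueck.even} give $T=\up_1\wed\up_2$ spanning the tautological plane, and your condition $\t^{\mu}\wed T=0$ in $\L^3(V_0)$ is, component by component, exactly the paper's vanishing $3\times 3$ determinant with rows $\t^{\mu},\up_1,\up_2$, i.e.\ the membership criterion $\t^{\mu}\in\Pi L_0$. Your extra paragraph checking the chart-by-chart gluing and the linearity of the transition functions is a sound elaboration of a step the paper leaves implicit, not a deviation in method.
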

\begin{proof}
Indeed, suppose $T_0=T^{ab}e_a\wed e_b\in \L^2(V_0)$ (where $a,b=1,\ldots,n$) satisfies~\eqref{eq.plueck.even}.  Then $T^{ab}=\frac{1}{2}\begin{vmatrix}
                          u_1^a & u_1^b \\
                          u_2^a & u_2^b
                        \end{vmatrix}
$ for some vectors $\up_1,\up_2$ that span a plane $L_0\subset V_0$ and the relation~\eqref{eq.plueck.odd} becomes
\begin{equation*}
    \begin{vmatrix}
      \t^{a\mu} & \t^{b\mu} & \t^{c\mu} \\
      u_1^a & u_1^b & u_1^c \\
      u_2^a & u_2^b & u_2^c
    \end{vmatrix}=0\,,
\end{equation*}
which exactly the condition for an ``odd vector''  with the coordinates $\t^{a\mu}$ (for a fixed $\mu$) in $V_0$ to belong to the subspace spanned by $\up_1,\up_2$\,.
\end{proof}


For further illustration, we consider   two exceptional ``low-dimensional'' examples peculiar for the super case.

\begin{example}[$k=2$ in $2|m$-space]
\label{ex.2in2m}
We have for a bivector $T$,
\begin{equation}\label{eq.2in2m}
    T=2T^{12}e_1\wed e_2 +2\t^{1\mu} e_1\wed \e_{\mu} +\sum_{\mu}S^{\mu\mu}\e_{\mu}\wed \e_{\mu} + 2\sum_{\la<\mu} S^{\la\mu}\e_{\la}\wed \e_{\mu}\,,
\end{equation}
where $T^{12}$ is invertible.
The non-empty super Pl\"{u}cker relations are
\begin{align}
\label{eq.plueck.12oo}
    T^{12}S^{\la\mu} &= - \t^{1\la}\t^{2\mu}  - \t^{1\mu}\t^{2\la}\,, \\
\label{eq.plueck.1s}
    \t^{a\nu}S^{\la\mu} &= - \t^{a\la}S^{\mu\nu}  - \t^{a\mu}S^{\la\nu}\,, \\
\label{eq.plueck.ss}
    S^{\kappa\nu}S^{\la\mu} &= - S^{\kappa\la}S^{\mu\nu}  - S^{\kappa\mu}S^{\la\nu}\,.
\end{align}
There are no reduced relations.
We can normalize $T^{12}=1$. Then   $\t^{1\mu}$ and $\t^{2\mu}$ remain as the only independent variables.  The variables  $S^{\la\mu}$ for all $\la,\mu=1,\ldots,m$  are expressed
from~\eqref{eq.plueck.12oo} as
$S^{\la\mu}=-\t^{1\la}\t^{2\mu}-\t^{1\mu}\t^{2\la}$   and relations~\eqref{eq.plueck.1s},~\eqref{eq.plueck.ss} are automatically satisfied. This corresponds to 
$G_2(\R{2|m})\cong \R{0|2m}$.
\end{example}

\begin{example}[$k=2$ in $3|m$-space]
We have
\begin{multline}\label{eq.2in3m}
    T=2\left(T^{12}e_1\wed e_2+T^{13}e_1\wed e_3+T^{23}e_2\wed e_3\right) 
    +2\left(\t^{1\mu} e_1\wed \e_{\mu}+\t^{2\mu} e_2\wed \e_{\mu}+ \t^{3\mu} e_3\wed \e_{\mu}\right)\\
     +\sum_{\mu}S^{\mu\mu}\e_{\mu}\wed \e_{\mu} + 2\sum_{\la<\mu} S^{\la\mu}\e_{\la}\wed \e_{\mu}\,.
\end{multline}
There are no relations for $T^{12},T^{13},T^{23}$. The variables $\t^{a\mu}$ satisfy the linear relation (same for each $\mu$)
\begin{equation}
    T^{12}\t^{3\mu}=T^{13}\t^{2\mu}-T^{23}\t^{1\mu}\,,
\end{equation}
so in the domain where e.g. $T^{12}$ is invertible, the variables $\t^{1\mu}, \t^{2\mu}$ can be taken as independent. The variables $S^{\la\mu}$ can be expressed as in   previous Example~\ref{ex.2in2m} via $\t^{1\mu}, \t^{2\mu}$ and all relations for them will be identically satisfied. The underlying space of the super Grassmannian $G_{2|0}(\R{3|m})$
is the projective space $\RR P^3$ and our analysis of the super Pl\"{u}cker relations gives the description
$G_{2|0}(\R{3|m})\cong \underbrace{\Pi E_1^{\perp}\oplus \ldots \Pi E_1^{\perp}}_{\text{$m$ times}}$, where $E_1^{\perp}\to \RR P^3$ is the annihilator bundle (of rank $2$) for the tautological line bundle on the projective space.
\end{example}

The simplest example with a non-trivial even part of the reduced super Pl\"{u}cker relations is $k=2$ and $n|m=4|1$ (the setup of Example~\ref{ex.2in4.1}).
\begin{example}[continued from Example~\ref{ex.2in4.1} and Example~\ref{ex.ktwocontmone}]
We have
\begin{equation}
    T= 2\sum_{a<b} T^{ab}e_a\wed e_b + 2 \t^{a}e_a\wed \e  +
     s\,\e \wed \e \,.
\end{equation}
Here $a,b=1,\ldots,4$. The components $T^{ab}$ satisfy the only non-trivial relation
\begin{equation}\label{eq.classplueck24}
    T^{13}T^{24}=T^{12}T^{34}+T^{14}T^{23}\,,
\end{equation}
which is the classical Pl\"{u}cker relation for $G_2(\R{4})$ and $\t^a$ satisfy the linear relations
\begin{equation}\label{eq.reltheta2in41}
    T^{ab}\t^c=T^{ac}\t^b+T^{cb}\t^a\,,
\end{equation}
for all $a,b,c$ (there will be four different relations here).
Besides that, there is the relation
\begin{equation}
    T^{ab}s=-2\t^a\t^b
\end{equation}
from where $s$ can be expressed in each open domain where $T^{ab}$ is invertible, and further relations for $\t^a$ and $s$
\begin{equation}
    \t^a\,s=0  \quad \text{and}\quad s^2=0\,,
\end{equation}
which will be then identically satisfied. The  reduced  super Pl\"{u}cker  relations \eqref{eq.classplueck24},\eqref{eq.reltheta2in41} correspond  to the description $G_{2|0}(\R{4|1})\cong \Pi E$, where $E\to G_2(\R{4})$ is the tautological bundle.
\end{example}

This example of $2$-planes in the $4|1$-space was the only example of super Pl\"{u}cker embedding  previously known in the literature and the super Pl\"{u}cker relations for it were first obtained by Cervantes--Fioresi--Lled\'{o}~\cite{cervantes:quantum-chiral-2011}. (The super Grassmannian $G_{2|0}(4|1)$ can be interpreted   as a ``compactified super Minkowski'' space following twistor approach~\cite{manin:gaugeeng}.)

\section{Super Pl\"{u}cker embedding. General case: $r|s$-planes in $n|m$-space} \label{sec.general}

\subsection{Reformulation as a matrix problem} \label{subsec.reformul}

We return to the case of an $r|s$-plane $L$ in the $n|m$-dimensional superspace $V$. Such a  plane in spanned by a basis consisting of $r$ even and $s$ odd vectors,
\begin{equation}\label{eq.rsplane}
    L=\spa(\up_1,\ldots,\up_r\,|\,\up_{\hat 1},\ldots,\up_{\hat s})\,.
\end{equation}
As we already discussed, the naive idea of taking the wedge product of all basis vectors regardless of their parities (which would be an element of $\L^{r+s}(V)$) will not lead to a quantity invariantly associated with the plane. Instead we should use the ``non-linear wedge product'' introduced in Example~\ref{ex.nonlinwedge} in subsection~\ref{subsec.exteriorpowers}, $[\up_1,\ldots,\up_r\,|\,\up_{\hat 1},\ldots,\up_{\hat s}]\in \L^{r|s}(V)$. By definition, it is a function of $r$ even and $s$ odd covectors given by
\begin{equation}\label{eq.nwedge}
    [\up_1,\ldots,\up_r\,|\,\up_{\hat 1},\ldots,\up_{\hat s}](p^1,\ldots,p^r\,|\,p^{\hat 1},\ldots,p^{\hat s})=
    \Ber\left(\langle \up_i, p^j\rangle\right)\,.
\end{equation}
Here $i,j\in \{1,\ldots,r\}\cup \{\hat 1,\ldots,\hat s\}$ and $i$ is the first index (row number) while $j$ is the second index (column number).
\begin{example}
If $s=0$, then all vectors $\up_i$ are even and
\begin{equation*}
    [\up_1,\ldots,\up_r](p^1,\ldots,p^r)=\det \left(\langle \up_i, p^j\rangle\right)\,,
\end{equation*}
where $i,j=1,\ldots,r$. Hence in this case
\begin{equation*}
    [\up_1,\ldots,\up_r]= r!\,\up_1\wed \ldots\wed\up_r\,.
\end{equation*}
\end{example}
So we have agreement with constructions in the previous section.
If the basis vectors $\up_i$ are replaced by other basis vectors $\up_{i'}=g_{i'}{}^{i}\up_i$, where $g=(g_{i'}{}^{i})\in \GL(r|s)$, then
\begin{equation}\label{eq.changenonlinwed}
    [\up_{1'},\ldots,\up_{r'}\,|\,\up_{\hat 1'},\ldots,\up_{\hat s'}]=\Ber g\cdot[\up_1,\ldots,\up_r\,|\,\up_{\hat 1},\ldots,\up_{\hat s}]\,.
\end{equation}
Hence the assignment $L\mapsto [\up_1,\ldots,\up_r\,|\,\up_{\hat 1},\ldots,\up_{\hat s}]$ can be seen as an  invertible
$\L^{r|s}(V)$-valued section of the line bundle $(\Ber E)^*\to G_{r|s}(V)$, where $E\to G_{r|s}(V)$ is the tautological bundle (the ``tautological section''). Passing to the projectivization gives a well-defined map
\begin{equation}\label{eq.splueckprelim}
    \pl\co G_{r|s}(V) \to P(\L^{r|s}(V))\,, \quad L\mapsto \pl(L):=\cl [\up_1,\ldots,\up_r\,|\,\up_{\hat 1},\ldots,\up_{\hat s}]
\end{equation}
(here $\cl$ stands for  equivalence class). This is our \textbf{preliminary definition} of the \emph{super Pl\"{u}cker map} for the general case of $r|s$-planes. We will shortly see that it is not entirely satisfactory and we will  how it needs to be  amended.

Fix  a basis in the superspace $V$. Then vectors in $V$ are described by their coordinates, which we will write as rows (i.e. left coordinates). A basis $\up_i$ spanning a plane $L$ will be represented by an $r|s\times n|m$ matrix $U$, which we consider as the matrix of homogeneous coordinates of a point $L$ of the super Grassmannian $G_{r|s}(V)$ (see subsection~\ref{subsec.recol}). Homogeneity means with respect to the action of the supergroup $\GL(r|s)$, i.e. $U\sim gU$ for all $g\in GL(r|s)$. Similarly, covectors $p\in V^*$ can be represented by their right coordinates written as columns, so that the pairing between $V$ and $V^*$ takes the form $\langle\up, p\rangle=u^ap_a$ (summation over all indices $a\in   \{1,\ldots,n\}\cup \{\hat 1,\ldots,\hat m\}$). Respectively, an array of $r|s$ covectors will be represented by an $n|m\times r|s$ matrix.

Our   problem can reformulated in terms of matrices as follows.

Given an even $r|s\times n|m$ matrix $U$ of rank $r|s$, we define the  \textbf{``Pl\"{u}cker transform''} of $U$ as a function $\pl(U)$ on even $n|m\times r|s$ matrices  $P$ of rank $r|s$,
\begin{equation}\label{eq.plup}
    \pl(U)(P):=\Ber (UP)\,.
\end{equation}
(We use the same notation as in~\eqref{eq.splueckprelim}, though in~\eqref{eq.splueckprelim} we consider equivalence classes.)
Now the   problem is:

(1) to find the matrix $U$ from a given function $\pl(U)$, and

(2)  to describe the image of $\pl$ among functions of matrices.

Note that $\pl(U)(P)$ is a rational function of the matrix $P$  with the domain of definition depending on $U$. Whether one should regard    the map $U\mapsto \pl(U)$   itself as ``rational'', is a subtle point. On one hand, it is given by rational functions of the matrix entries of $U$. On the other hand, the function $\pl(U)(P) =\Ber (UP)$ as a rational function of a matrix $P$ is defined for \emph{all} matrices $U$ of rank $r|s$; there is no situation of division by  zero (identically) for some matrix $U$. The expression $\pl(U)(P)$ has poles as a function of $P$ and these poles vary with $U$. From this viewpoint, we perhaps can think of $U\mapsto \pl(U)$  as a regular map (from matrices to functions).

From~\eqref{eq.changenonlinwed}, which in the matrix form becomes
\begin{equation}
    \pl({gU})  =\Ber g \cdot\pl(U)\,,
\end{equation}
for   $g\in \GL(r|s)$, it follows that the matrix $U$ is defined by the function $\pl({U})$   non-uniquely. To take care of this non-uniqueness, one can impose an extra   ``gauge condition''. Basically, this is a different language for replacing $U$ by a matrix of inhomogeneous coordinates in a particular affine chart on the super Grassmannian. Such charts (see subsection~\ref{subsec.recol}) are   numbered by   choices of $r|s$ columns in $U$, i.e. $r$ even and $s$ odd columns giving an invertible square submatrix. One can divide by it from the left and obtain a matrix where the corresponding submatrix is replaced by the $r|s\times r|s$ identity  matrix. Then we can explore if the Pl\"ucker transform is invertible when restricted on such gauge-fixed matrices. Before developing   general theory, let us look at an example.

\subsection{Example: $1|1$-planes in $2|2$-space}
\label{subsec.11in22}

Let $U$ be a $1|1\times 2|2$ matrix of a gauge-fixed form
\begin{equation}\label{eq.matrixA}
   U=\begin{pmatrix}
        x   & 1 & \vline & \x  & 0 \\
        \hline \vphantom{\int_a^b}
        \h & 0 & \vline & y & 1 \\
      \end{pmatrix}\,.
\end{equation}
Consider its Pl\"ucker transform $\pl(U)$. For a matrix $P$,
\begin{equation}\label{eq.matrixB}
    P=\begin{pmatrix}
        p^{1}_1 &   \vline & p^{\hat 1}_1 \\[3pt]
        p^{1}_2 &   \vline & p^{\hat 1}_2 \\[3pt]
        \hline \vphantom{\int_a^b}
        p^{1}_{\hat 1} & \vline & p^{\hat 1}_{\hat 1} \\[3pt]
        p^{1}_{\hat 2} & \vline & p^{\hat 1}_{\hat 2} \\
      \end{pmatrix}=
      \begin{pmatrix}
        p_1 & \vline & \pi_1 \\[3pt]
        p_2 & \vline & \pi_2 \\[3pt]
        \hline \vphantom{\int_a^b}
        p_{\hat 1} & \vline & \pi_{\hat 1} \\[3pt]
        p_{\hat 2} & \vline & \pi_{\hat 2} \\
      \end{pmatrix}\,,
\end{equation}
we have explicitly
\begin{multline}\label{eq.plAP}
   \pl(U)(P)=\Ber(UP)=\Ber\begin{pmatrix}
                             x p_1 + p_2 +\x p_{\hat 1} & \vline
                             & x \pi_1 + \pi_2 +\x \pi_{\hat 1} \\
                             \hline \vphantom{\int_a^b}
                             \h p_1 + y p_{\hat 1} + p_{\hat 2} & \vline
                             & \h \pi_1 + y \pi_{\hat 1} + \pi_{\hat 2} \\
                           \end{pmatrix} \\
   = \frac{x p_1 + p_2 +\x p_{\hat 1}
    -(\h p_1 + y p_{\hat 1} + p_{\hat 2})
    (\h \pi_1 + y \pi_{\hat 1} + \pi_{\hat 2})^{-1}
    (x \pi_1 + \pi_2 +\x \pi_{\hat 1})}{\h \pi_1 + y \pi_{\hat 1} + \pi_{\hat 2}}\,.
\end{multline}
Two facts can be observed. For an even $P$, the entries $\pi_1$, $\pi_2$, $p_{\hat 1}$ and $p_{\hat 2}$ are odd, so the entries $\x$ and $\h$ of $U$ always appear in $\pl(U)(P)$ accompanied by a nilpotent (odd) factor. This is no wonder, since $\Ber (UP)$ is even, so the coefficient at
$\x$ or $\h$ in the expansion    must be odd. Secondly, the entry $y$ of the matrix $U$ appears only in the denominator in $\pl(U)(P)$, hence solving for it   requires taking the inverse.

The first observation, at the first glance, makes  it impossible to solve for $\x$ and $\h$. However, the way out is given by the property of $\pl(U)$ following from the properties of Berezinian. Namely, as Berezinian is a multilinear function with respect to even rows or even columns, the function $\pl(U)(P)$ is also multilinear in the $r$ even columns of the matrix $P$. So in our particular case, $\pl(U)(P)$ can be
defined for  a column of arbitrary  parity  put  in the first (even) position  (and keeping an odd independent column  in the odd position).

Let $e^1$, $e^2$, $e^{\hat 1}$ and $e^{\hat 2}$ be the covectors of the standard basis:
\begin{equation*}
    e^1=\begin{pmatrix}
          1 \\
          0 \\
          \hline \vphantom{\int_a^b}
          0 \\
          0 \\
        \end{pmatrix}\,,
    e^2=\begin{pmatrix}
          0 \\
          1 \\
          \hline \vphantom{\int_a^b}
          0 \\
          0 \\
        \end{pmatrix}\,,
    e^{\hat 1}=\begin{pmatrix}
          0 \\
          0 \\
          \hline \vphantom{\int_a^b}
          1 \\
          0 \\
        \end{pmatrix}\,,
     e^{\hat 2}=\begin{pmatrix}
          0 \\
          0 \\
          \hline \vphantom{\int_a^b}
          0 \\
          1 \\
        \end{pmatrix}\,.
\end{equation*}
Then, writing the argument of $\pl(U)$ as an array of covectors, we have:
\begin{align*}
    \pl(U)(e^1|e^{\hat 1})&= \Ber\begin{pmatrix}
                                 x & \vline & \x \\
                                 \hline \vphantom{\int_a^b}
                                 \h & \vline & y \\
                               \end{pmatrix}= \frac{x-\x y^{-1}\h}{y}\,,\\
    \pl(U)(e^1|e^{\hat 2})&= \Ber\begin{pmatrix}
                                 x  & \vline& 0 \\
                                 \hline \vphantom{\int_a^b}
                                 \h & \vline & 1 \\
                               \end{pmatrix}= x\,,\\
      \pl(U)(e^2|e^{\hat 1})&= \Ber\begin{pmatrix}
                                 1 & \vline & \x \\
                                 \hline \vphantom{\int_a^b}
                                 0 & \vline & y \\
                               \end{pmatrix}= \frac{1}{y}\,,\\
      \pl(U)(e^2|e^{\hat 2})&= \Ber\begin{pmatrix}
                                 1  & \vline& 0 \\
                                 \hline \vphantom{\int_a^b}
                                 0  & \vline& 1 \\
                               \end{pmatrix}= 1\,,
\end{align*}
and with a ``ghost'' (wrong parity) column as the first argument,
\begin{align*}
    \pl(U)(e^{\hat 1}|e^{\hat 1})&= \Ber\begin{pmatrix}
                                 \x & \vline & \x \\
                                 \hline \vphantom{\int_a^b}
                                 y & \vline & y \\
                               \end{pmatrix}= 0\,,\\
    \pl(U)(e^{\hat 1}|e^{\hat 2})&= \Ber\begin{pmatrix}
                                 \x & \vline & 0 \\
                                 \hline \vphantom{\int_a^b}
                                 y & \vline & 1 \\
                               \end{pmatrix}= \x\,,\\
      \pl(U)(e^{\hat 2}|e^{\hat 1})&= \Ber\begin{pmatrix}
                                 0 & \vline & \x \\
                                 \hline \vphantom{\int_a^b}
                                 1 & \vline & y \\
                               \end{pmatrix}= -\frac{\x}{y^2}\,,\\
      \pl(U)(e^{\hat 2}|e^{\hat 2})&= \Ber\begin{pmatrix}
                                 0 & \vline & 0 \\
                                 \hline \vphantom{\int_a^b}
                                 1 & \vline & 1 \\
                               \end{pmatrix}= 0\,.
\end{align*}
Hence, in particular, we recover $x$ and $\x$ as
\begin{align}
    x&= \pl(U)(e^1|e^{\hat 2})\,,\\
    \x&=  \pl(U)(e^{\hat 1}|e^{\hat 2})\,.
\end{align}

To obtain similarly $y$ and $\h$, we shall use the inverse Berezinian function $\Ber^*g$   introduced in Sec.~\ref{subsec.extalgebra}. We define the \emph{$\Pi$-dual Pl\"ucker transform} $\pl^{\,*}(U)$ of a matrix $U$
as the function
\begin{equation}\label{eq.plueckinv}
    \pl^*(U)(P):=\Ber^*(UP)\,.
\end{equation}
It is now linear in the column $p^{\hat 1}$, so a ``ghost'', i.e. wrong parity, value can be substituted.
Then similarly to the above, we    have
\begin{align*}
    \pl^*(U)(e^1|e^{\hat 1})&= \Ber^*\begin{pmatrix}
                                 x & \vline & \x \\
                                 \hline \vphantom{\int_a^b}
                                 \h & \vline & y \\
                               \end{pmatrix}= \frac{y-\h x^{-1}\x}{x}\,,\\
    \pl^*(U)(e^1|e^{\hat 2})&= \Ber^*\begin{pmatrix}
                                 x & \vline & 0 \\
                                 \hline \vphantom{\int_a^b}
                                 \h & \vline & 1 \\
                               \end{pmatrix}= \frac{1}{x}\,,\\
      \pl^*(U)(e^2|e^{\hat 1})&= \Ber^*\begin{pmatrix}
                                 1 & \vline & \x \\
                                 \hline \vphantom{\int_a^b}
                                 0 & \vline & y \\
                               \end{pmatrix}= y\,,\\
      \pl^*(U)(e^2|e^{\hat 2})&= \Ber^*\begin{pmatrix}
                                 1 & \vline & 0 \\
                                 \hline \vphantom{\int_a^b}
                                 0 & \vline & 1 \\
                               \end{pmatrix}= 1\,,
\end{align*}
and also  with an even covector in the odd position:
\begin{align*}
    \pl^*(U)(e^1|e^{1})&= \Ber^*\begin{pmatrix}
                                 x & \vline & x \\
                                 \hline \vphantom{\int_a^b}
                                 \h & \vline & \h \\
                               \end{pmatrix}= 0\,,\\
    \pl^*(U)(e^1|e^{2})&= \Ber^*\begin{pmatrix}
                                 x & \vline & 1 \\
                                 \hline \vphantom{\int_a^b}
                                 \h & \vline & 0 \\
                               \end{pmatrix}= -\frac{\h}{x^2}\,,\\
      \pl^*(U)(e^2|e^{1})&= \Ber^*\begin{pmatrix}
                                 1 & \vline & x \\
                                 \hline \vphantom{\int_a^b}
                                 0 & \vline & \h \\
                               \end{pmatrix}= \h\,,\\
      \pl^*(U)(e^2|e^{2})&= \Ber^*\begin{pmatrix}
                                 1 & \vline & 1 \\
                                 \hline \vphantom{\int_a^b}
                                 0 & \vline & 0 \\
                               \end{pmatrix}= 0\,.
\end{align*}
Hence, in particular,
\begin{align}
    y&= \pl^*(U)(e^2|e^{\hat 1})\,,\\
    \h&=  \pl^*(U)(e^2|e^{1})\,.
\end{align}

This example demonstrates the following: (1) it is indeed possible to reconstruct the matrix $U$ provided we use \emph{both} functions $\pl(U)$ and $\pl^*(U)$; (2) reconstruction formulas require  \emph{extending} each of the functions $\pl(U)$ and $\pl^*(U)$ so to allow   ``ghost'' covectors as their arguments. We also see that there is redundancy in the data provided by the values of $\pl(U)$ and $\pl^*(U)$. (This redundancy will be the source of    ``super Pl\"ucker relations'' in this framework.)

\subsection{General formulation. Proof of embedding}
\label{subsec.proofemb}

Return to the geometric language. The above preliminary definition of the super Pl\"{u}cker map for the super Grassmannian $G_{r|s}(V)$ (given by formula~\eqref{eq.splueckprelim}) will be now amended as follows.
\begin{proposition}
There is a natural identification
\begin{equation}\label{eq.pigrass}
    G_{r|s}(V)= G_{s|r}(\Pi V)\,.
\end{equation}
\end{proposition}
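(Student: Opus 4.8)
The plan is to realize the identification concretely through the parity-reversion functor $\Pi$, which sends a subspace $L\subset V$ to $\Pi L\subset \Pi V$; since $\Pi$ interchanges even and odd dimensions, an $r|s$-plane is carried to an $s|r$-plane, so on points we obtain a map $G_{r|s}(V)\to G_{s|r}(\Pi V)$, $L\mapsto \Pi L$. I would then prove this is an isomorphism of supermanifolds by reducing everything to the matrix of homogeneous coordinates $U$ of~\eqref{eq.U} and its parity reversion $U^{\Pi}$, exactly as flagged after~\eqref{eq.piongrass}.

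First I would check that $U\mapsto U^{\Pi}$ lands in the right space. If $U$ is an even $(r|s)\times(n|m)$ matrix of rank $r|s$, then $U^{\Pi}$ has size $(s|r)\times(m|n)$: parity reversion only relabels rows and columns, so the entries keep their parities while the diagonal and antidiagonal blocks are swapped, whence $U^{\Pi}$ is again an even supermatrix, and it represents an array of $s$ even and $r$ odd vectors in $\Pi V$ (note $\dim\Pi V=m|n$). An invertible $(r|s)\times(r|s)$ submatrix of $U$ becomes, after relabeling, an invertible $(s|r)\times(s|r)$ submatrix of $U^{\Pi}$, so $\rank U^{\Pi}=s|r$ and $U^{\Pi}\in\St_{s|r}(\Pi V)$.

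Next I would verify compatibility with the group actions and the atlases. Parity reversion is multiplicative, $(gU)^{\Pi}=g^{\Pi}U^{\Pi}$, and restricts to a group isomorphism $\GL(r|s)\xrightarrow{\sim}\GL(s|r)$, so it carries $\GL(r|s)$-orbits to $\GL(s|r)$-orbits and hence descends to a bijection on the quotients $\St_{r|s}(V)/\GL(r|s)$ and $\St_{s|r}(\Pi V)/\GL(s|r)$. At the level of charts, a chart $\ach=(a_1<\dots<a_r\mid a_{\hat 1}<\dots<a_{\hat s})$ for $G_{r|s}(V)$ corresponds after column relabeling to a chart for $G_{s|r}(\Pi V)$, and in inhomogeneous coordinates the map reads $\tensor[^{\ach}]{U}{}\mapsto(\tensor[^{\ach}]{U}{})^{\Pi}$, which merely permutes the free matrix entries by parity; this is an invertible (indeed linear) change of coordinates, compatible with the transition formula~\eqref{eq.affcoordchange} because parity reversion respects products and inverses. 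Since $\Pi^2=\id$ canonically, the same construction applied to $\Pi V$ furnishes the inverse, so the map is an isomorphism of supermanifolds.

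The construction is essentially bookkeeping, so I expect no deep obstacle; the one point that deserves care is the precise sense of the word ``identification''. To be rigorous one must establish an isomorphism of \emph{supermanifolds} rather than merely of underlying sets or topological spaces. I would handle this through the atlas as above (which sidesteps tracking how $\Pi$ interacts with the auxiliary base superalgebra of a family); alternatively, one can argue functorially, noting that $\Pi$ induces a natural equivalence between the functor of $r|s$-dimensional summands of $A\otimes V$ and that of $s|r$-dimensional summands of $A\otimes\Pi V$, and then invoke the universal property defining the super Grassmannians. The only genuinely checkable claim hidden in the bookkeeping is that parity reversion is a group isomorphism $\GL(r|s)\cong\GL(s|r)$ compatible with $\Ber$ via $\Ber g^{\Pi}=(\Ber g)^{-1}$, which was already recorded in subsection~\ref{subsec.ber}.
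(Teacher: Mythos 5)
Your proposal is correct and follows exactly the paper's route: the paper's own proof is the one-line observation that $L\mapsto \Pi L$ gives the correspondence (with the matrix description $U\mapsto U^{\Pi}$ already noted after~\eqref{eq.piongrass}), and your argument is simply a careful elaboration of that same idea, verifying the rank, the equivariance under $\GL(r|s)\cong\GL(s|r)$, the chart compatibility, and the involutivity of $\Pi$. All of these checks are sound, so your write-up is a valid (indeed more detailed) version of the paper's proof.
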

\begin{proof}
To every $r|s$-plane $L\subset V$ corresponds an $s|r$-plane $\Pi L\subset \Pi V$, and conversely.
\end{proof}
In parallel with the map $\pl\co L\mapsto [\up_1,\ldots,\up_r\,|\,\up_{\hat 1},\ldots,\up_{\hat s}]\in \L^{r|s}(V)$, where we do not pass to  equivalence classes, we consider a map $\pl^*$, where $\pl^*(L)$ is defined as $\pl(\Pi L)\in \L^{s|r}(\Pi V)$ (also without passing to  equivalence classes). We shall consider $\pl(L)$ and $\pl^*(L)$ together. In the matrix language, for the matrix of homogeneous coordinates $U$ of a plane $L$, these are the functions $\pl(U)$ and $\pl^*(U)$ given by~\eqref{eq.plup} and~\eqref{eq.plueckinv}. As we have explained, when considered on the original domain of even  $n|m\times r|s$ matrices $P$, the   functions $\pl(U)$ and $\pl^*(U)$ are  related simply by
\begin{equation*}
    \pl^*(U)=\frac{1}{\pl(U)}\,.
\end{equation*}
However, we need to consider each of the functions $\pl(U)$ and $\pl^*(U)$   \emph{extended} by   multilinearity to larger domains  (with the possibility of ``ghost'' columns),   different for $\pl(U)$ and $\pl*(U)$.  Hence we need both functions. Note that while $\pl(gU)=\Ber g\cdot \pl(U)$, we have
\begin{equation*}
    \pl^*(gU)=(\Ber g)^{-1}\cdot \pl^*(U)\,.
\end{equation*}
With this in mind, we define finally
the \textbf{super Pl\"ucker map} for the super Grassmannian $G_{r|s}(V)$ as the map
\begin{equation}
\Pl\co G_{r|s}(V)\to P_{1,-1}\left(\L^{r|s}(V)\oplus \L^{s|r}(\Pi V)\right)\,, \quad
     L\mapsto  \cl(\pl(L), \pl^*(L))\,
\end{equation}
(with    capital $\textswab{P}$). Here $P_{1,-1}$ denotes the weighted projective (super)space with weights $+1$ and $-1$ respectively, for the corresponding direct summands. Namely, points of
\begin{equation*}
    P_{1,-1}\left(\L^{r|s}(V)\oplus \L^{s|r}(\Pi V)\right)
\end{equation*}
are equivalence classes of pairs $(F, G)$, where $F\in \L^{r|s}(V)$ and $G\in \L^{s|r}(\Pi V)$, both no-zero, and $(F, G)\sim (\la F, \la^{-1}G)$.

\begin{remark}
In the standard usage, weights for
weighted projective spaces  are     in $\mathbb{N}$. Standard weighted projective spaces   are spaces with singularities (if the weights are unequal), not smooth manifolds.  In our case, we have to consider an analog of weighted projective spaces with weights $1$ or $-1$.  The space $P_{1,-1}\left(\L^{r|s}(V)\oplus \L^{s|r}(\Pi V)\right)$ is itself infinite-dimensional, but below we shall construct its finite-dimensional version, which will also be a weighted projective superspace with weights $\pm 1$. We will see that in such a case, it will be a \emph{smooth}  (non-singular) supermanifold.
\end{remark}

\begin{theorem}\label{thm.embedgen}
The super Pl\"ucker map $\Pl$  gives an embedding of the  super   Grassmannian  $G_{r|s}(V)$ into $P_{1,-1}\left(\L^{r|s}(V)\oplus \L^{s|r}(\Pi V)\right)$.
\end{theorem}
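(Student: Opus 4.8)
The plan is to prove that $\Pl$ is an embedding by constructing an explicit regular left inverse on each affine chart of $G_{r|s}(V)$, generalizing the reconstruction performed in the example of subsection~\ref{subsec.11in22}. Well-definedness of $\Pl$ is already in hand: since $\pl(gU)=\Ber g\cdot\pl(U)$ and $\pl^*(gU)=(\Ber g)^{-1}\cdot\pl^*(U)$ for $g\in\GL(r|s)$, the pair $(\pl(U),\pl^*(U))$ transforms exactly by the rule $(F,G)\sim(\la F,\la^{-1}G)$ with $\la=\Ber g$, so the class $\cl(\pl(L),\pl^*(L))$ depends only on the plane $L$. It remains to establish injectivity together with the immersion property, and I would obtain both at once from the reconstruction formulas.

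Fix an affine chart numbered by $\ach=(a_1<\ldots<a_r\,|\,a_{\hat 1}<\ldots<a_{\hat s})$ and gauge-fix the homogeneous matrix $U$ so that the submatrix $U^{\ach}$ in these columns is the identity; the remaining entries $u_i{}^b$ with $b\notin\aund$ are then precisely the inhomogeneous coordinates on the chart. The heart of the argument is the claim that \emph{each} entry $u_i{}^b$ is a single value of a multilinearly extended Pl\"ucker transform. For an even row $i\in\{1,\ldots,r\}$ I would evaluate $\pl(U)$ on the array of dual basis covectors for $\ach$ with the $i$-th even slot $e^{a_i}$ replaced by $e^b$; by the block formula~\eqref{eq:ber} for $\Ber$, the product $UP$ becomes the identity with its $i$-th column replaced by the $b$-th column of $U$, and its Berezinian collapses to the lone entry $u_i{}^b$. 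For an odd row $i\in\{\hat 1,\ldots,\hat s\}$ the very same mechanism works after replacing $\Ber$ by $\Ber^*$, that is, using $\pl^*(U)$ and perturbing the $i$-th odd slot, because $\Ber^*$ is the function that is multilinear in the odd columns (formula~\eqref{eq.invber}). When $b$ has the parity opposite to the slot it occupies, this evaluation uses exactly the extension to \emph{ghost} columns guaranteed by the multilinearity of $\Ber$ (respectively $\Ber^*$) in even (respectively odd) columns, as recalled in subsection~\ref{subsec.ber}; this is why both transforms are needed and why neither alone recovers all rows.

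From these formulas injectivity is immediate. The $r|s$-minor $\pl(U)(e^{a_1},\ldots,e^{a_r}\,|\,e^{a_{\hat 1}},\ldots,e^{a_{\hat s}})=\Ber U^{\ach}$ is nonvanishing precisely when $L$ lies in the $\ach$-chart, so from $\Pl(L)$ one reads off which charts contain $L$. Within any such chart, gauge-fixing makes this minor equal to $1$, which pins the scaling factor in the equivalence $(\pl(L'),\pl^*(L'))=(\la\,\pl(L),\la^{-1}\pl^*(L))$ to $\la=1$; hence $\pl(U)=\pl(U')$ and $\pl^*(U)=\pl^*(U')$ as functions, the reconstructed entries coincide, and $U=U'$. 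Because every coordinate $u_i{}^b$ is a fixed regular functional of the data $(\pl(L),\pl^*(L))$, this same assignment is a morphism of supermanifolds furnishing a left inverse to $\Pl$ on the chart; therefore $\Pl$ is an immersion and injective, and on each chart it is an embedding onto its image, the local inverses agreeing on overlaps. This gives the global embedding.

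The step I expect to be the main obstacle is the parity bookkeeping in the general reconstruction: verifying in full generality, and with the correct signs, that $\Ber(UP)$ and $\Ber^*(UP)$ evaluated on a dual-basis array with one slot perturbed by an arbitrary (possibly ghost) covector $e^b$ genuinely collapse to the single entry $u_i{}^b$. The example of subsection~\ref{subsec.11in22} is the prototype, but the general case requires careful use of the block decomposition of $\Ber$ and $\Ber^*$ together with the two distinct multilinearity extensions, and one must confirm that the two transforms jointly---rather than either separately---recover every matrix entry, which is exactly what forces the codomain to be the doubled weighted projective space $P_{1,-1}\left(\L^{r|s}(V)\oplus\L^{s|r}(\Pi V)\right)$ instead of a single projectivization.
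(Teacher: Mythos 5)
Your proposal is correct and follows essentially the same route as the paper: the paper likewise proves the theorem by reconstructing the matrix $U$ chart by chart from evaluations of the pair $(\pl(U),\pl^*(U))$ at basis-covector arrays with one (possibly ghost) slot perturbed, and glues the resulting local inverses on the image. Your gauge-fixed single-entry evaluations are exactly the paper's super Cramer rule formulas~\eqref{eq.wjb}--\eqref{eq.whbhnu} with the chart minors $u^{a_1\ldots a_r|\hmu_1\ldots\hmu_s}$ and $u^{*a_1\ldots a_r|\hmu_1\ldots\hmu_s}$ normalized to $1$, so normalizing the weight-$(1,-1)$ scaling by the chart minor plays the role of the paper's division in those ratios (the paper merely packages the finitely many evaluations through the auxiliary space $\pfin$ via the diagram~\eqref{eq.cdpfin}).
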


Our initial version of the super Pl\"{u}cker map, $\pl\co G_{r|s}(V) \to P(\L^{r|s}(V))$, introduced in subsection~\ref{subsec.reformul} is the composition  $\pl=\pi_1\circ \Pl$ of $\Pl$  with the natural projection
\begin{equation}
    \pi_1\co P_{1,-1}\left(\L^{r|s}(V)\oplus \L^{s|r}(\Pi V)\right)\to P(\L^{r|s}(V))\,.
\end{equation}
(A similar composition $\pi_2\circ \Pl$ gives the map $\pl^*\co G_{r|s}(V) \to P(\L^{s|r}(\Pi V))$.)
While  insufficient in general, the map $\pl$ is sufficient in  the ``algebraic'' case $s=0$. 
The following statement completes the analysis   performed by different method in Section~\ref{sec.simpl}.

\begin{theorem}\label{thm.embedalg}
For $s=0$, the super Pl\"ucker map
\begin{equation*}
    \pl\co G_r(V)\to P(\L^{r}(V))
\end{equation*}
is an embedding.
\end{theorem}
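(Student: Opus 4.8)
The plan is to verify the two defining properties of an embedding of supermanifolds separately: injectivity on points and injectivity of the differential. The second I would obtain in the sharpest possible form, by exhibiting $\pl$ as a coordinate inclusion (the graph of the remaining Pl\"ucker coordinates) in each affine chart, which makes the immersion property and the local retraction simultaneously evident.

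First I would prove injectivity. For $L=\spa(\up_1,\ldots,\up_r)$ and $T=\up_1\wed\ldots\wed\up_r$, the Lemma identifying $L_T$ for simple multivectors gives $L_T=L$. Since $\pl(L)=\cl(T)$, it remains to check that $L$ is recovered from the projective class alone: replacing $T$ by $\la T$ with $\la$ an invertible even scalar multiplies each partial evaluation $T(p^1,\ldots,p^{r-1},-)$ by $\la$ (up to a common sign), and by Lemma~\ref{lm.LT} multiplies each spanning component by $\la$, so the span $L_T$ is unchanged. Hence $L=L_{\pl(L)}$ is determined by $\cl(T)$, and $\pl$ is injective.

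Next I would show that $\pl$ restricts to a closed embedding on each affine chart. Fix a chart $\ach=(a_1<\ldots<a_r)$ of $G_r(V)$ given by a choice of $r$ even columns, with inhomogeneous coordinate matrix $\tensor[^{\ach}]{U}{}$ normalized by~\eqref{eq.inhomcoord} so that the columns indexed by $\aund$ form the $r\times r$ identity. The all-even Pl\"ucker coordinate $T^{\aund}$ (the ordinary $r$-minor on those columns) is then $1$, and its invertibility characterizes the chart, matching it with the affine chart $\{T^{\aund}\neq 0\}$ of $P(\L^r(V))$. The key point is the minor/cofactor identity, which survives in the super setting because $\L^{r|0}(V)=\L^r(V)$ and determinants behave multilinearly on matrices with even rows: for each $c$ and each $1\leq i\leq r$, the Pl\"ucker coordinate obtained from $\aund$ by replacing $a_i$ with $c$ equals, up to a fixed sign, the entry of $\tensor[^{\ach}]{U}{}$ in row $i$ and column $c$. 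For even $c$ this is an ordinary minor; for odd $c$ it is an odd (``ghost-column'') minor of the type discussed in subsection~\ref{subsec.ber}. Thus in the chart $\{T^{\aund}=1\}$ the affine coordinates of $G_r(V)$ appear, up to sign, among the affine Pl\"ucker coordinates, while the remaining Pl\"ucker coordinates are polynomial functions of them. The image is therefore the graph of these functions, and $\pl|_{\ach}$ is a diffeomorphism onto a closed coordinate submanifold. Since the charts $\ach$ of $G_r(V)$ correspond exactly to the charts $\{T^{\aund}\neq 0\}$ of $P(\L^r(V))$, the injective map $\pl$ is a local embedding everywhere, hence an embedding.

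I expect the main obstacle to be the super minor/cofactor identity with exact signs, and in particular the odd columns: there the recovered affine coordinate is an odd ghost-column minor rather than an ordinary determinant, so some care is needed to confirm that these odd entries are recovered faithfully and that the signs are consistent with Lemma~\ref{lm.LT}. Everything else is a routine transcription of the classical chart-by-chart argument.
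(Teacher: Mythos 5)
Your proposal is correct, and at its core it runs on the same mechanism as the paper's proof: the single-replacement Pl\"{u}cker coordinates you use (the minor of the gauge-fixed matrix with column $a_i$ replaced by column $c$, including the odd ghost-column case) are exactly the paper's ``essential super Pl\"{u}cker coordinates'' $u^{a_1\ldots a_r}$, $u^{a_1\ldots a_{r-1}\hmu}$, and your minor/cofactor identity in the normalized gauge $U^{\aund}=1$ is precisely the (super) Cramer rule by which the paper defines its local inverses $w_j{}^b=u^{a_1\ldots a_{j-1}ba_{j+1}\ldots a_r}/u^{a_1\ldots a_r}$ and $w_j{}^{\hnu}=u^{a_1\ldots a_{j-1}\hnu a_{j+1}\ldots a_r}/u^{a_1\ldots a_r}$; for $s=0$ only ordinary determinants occur, multilinear in columns, so the ghost column causes no sign trouble, just as you anticipated. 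The packaging differs in two ways. The paper factors $\pl$ through the finite-dimensional quotient $\pess$, constructs local inverses on the domains where $u^{\aund}$ is invertible, and observes that they glue to a single inverse on the image, which yields the embedding and gives injectivity for free; you instead work directly in the affine charts of $P(\L^{r}(V))$ and exhibit the image chart-by-chart as the graph of polynomial functions of the single-replacement coordinates, which buys the extra (true and useful) information that the image is closed in each chart $\{T^{\aund}\neq 0\}$. Your separate injectivity argument via the associated space $L_T$ (Lemma~\ref{lm.LT} together with $L_T=\spa(\up_1,\ldots,\up_r)$ for simple $T$) is sound but logically redundant: your own observation that $T^{\aund}$ is invertible on the image precisely over the $\aund$-chart of $G_r(V)$ means $\pl^{-1}\{T^{\aund}\neq 0\}$ equals that chart, which combined with injectivity on each chart already gives global injectivity and the saturation needed to upgrade ``injective local embedding'' to ``embedding'' --- this saturation step is the one thing you state tersely and should spell out, since injective local embeddings are not embeddings in general. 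One small looseness: the charts $\{T^{\aund}\neq 0\}$ with all-even $\aund$ do not exhaust the affine charts of $P(\L^{r}(V))$ (even components with pairs of odd indices give further charts), so ``correspond exactly'' should be read as ``these charts cover the image,'' which holds by the non-degeneracy of simple multivectors noted in Section~\ref{sec.simpl}.
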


We shall give proofs of Theorem~\ref{thm.embedgen} and Theorem~\ref{thm.embedalg} along similar lines. We shall evaluate the functions $\pl(U)$ and $\pl^*(U)$ at some combinations of basis covectors are show that this is sufficient to reconstruct $U$ up to equivalence, i.e. to reconstruct $L\in G_{r|s}(V)$. In other words, we will construct the inverse to $\Pl$ (and to $\pl$ for $s=0$) on the image.  (Later we shall come back to the second problem, of describing the image.)

Recall the construction of local coordinates (inhomogeneous) on the super Grassmannian given in subsection~\ref{subsec.recol}. Let $U$ as usual stand for the matrix of homogeneous coordinates on $G_{r|s}(V)$. In the open domain where a square $r|s\times r|s$ submatrix $U^{\aund}$ is invertible, the inhomogeneous coordinates in the $\aund$th chart are given by the matrix entries of $(U^{\aund})^{-1}U$. Here $\aund$ is a multi-index, $\aund=a_1,\ldots,a_r|\hat\mu_1,\ldots,\hat\mu_r$, giving the indices of $r$ even and $s$ odd columns in $U$. In order to express the entries of $(U^{\aund})^{-1}U$, we will use the  ``super Cramer  rule''.

Note that the familiar Cramer rule  for solving linear equations can be interpreted as the formula for the coordinates of the column-vector $c=A^{-1}b$,  for an  $r\times r$ invertible matrix $A$ and an arbitrary column-vector $b$:  $c_j={\det(A^1,\ldots,A^{j-1},b,A^{j+1},\ldots,A^r)}/{\det A}$,
i.e. at the top the column-vector $b$ is inserted in the $j$th position in the matrix $A$ replacing its $j$th column $A^j$. Due to the multilinearity of determinant, this  formula holds true regardless of the nature of the coordinates of $b$. In particular, $b$ can be an odd vector.
Now,  in the supercase, for an even invertible $r|s\times r|s$ matrix $A$ and a column-vector $b$ (of size $r|s$), the  \textbf{super Cramer rule}  gives   the coordinates of $c=A^{-1}b$ as
\begin{equation*}
    c_j=\begin{cases}
    \dfrac{\Ber(A^1,\ldots,A^{j-1},b,A^{j+1},\ldots,A^r|A^{\hat 1},\ldots,A^{\hat s})}{\Ber A} \quad  \text{for $j=1\ldots,r$}\,,
    \vspace{1em}\\
    \dfrac{\Ber^*(A^1,\ldots,A^r|A^{\hat 1},\ldots,A^{\hat j-1},b,A^{\hat j+1},\ldots,A^{\hat s})}{\Ber^* A}\quad  \text{for $j=\hat 1\ldots,\hat s$}\,.
    \end{cases}
\end{equation*}
Note that here the column-vector $b$ is inserted into an even position in the Berezinian $\Ber A$, 
which is multilinear in the even columns, hence such a substitution is valid regardless of the parity of $b$. Likewise, $b$ is inserted into an odd position, 
into the inverse Berezinian $\Ber^* A$, which is multilinear in the odd columns. Therefore this formula for $A^{-1}b$ applies for an even column-vector $b$ and returns an even column-vector and for an odd column-vector $b$ and returns an odd column-vector. (The super Cramer  rule was  obtained by Bergvelt and Rabin~\cite{rabin:duke}   using Gelfand--Retakh's quasideterminants~\cite{gelfand:retakh91,gelfand:retakh97}  and    by  a different approach in~\cite{tv:ber}.   See also~\cite{tv:superdif}.)

With this in mind, we can give the following formulas for the inhomogeneous coordinates on the super Grassmannian $G_{r|s}(V)$, $\dim V=n|m$, in the $\aund$th chart (where the submatrix $U^{\aund}$ is invertible)\,:
\begin{align}\label{eq.wjb}
    w_j{}^b&=\frac{u^{a_1\ldots a_{j-1}ba_{j+1}\ldots a_r|\hat\mu_1\ldots\hat \mu_s}}{u^{a_1\ldots a_r|\hat\mu_1\ldots\hat \mu_s}}\,, \quad j=1,\ldots,r\,, \ b=1,\ldots,n\,,\vspace{2em}\\
    \label{eq.wjhnu}
    w_j{}^{\hnu}&=\frac{u^{a_1\ldots a_{j-1}\hnu a_{j+1}\ldots a_r|\hat\mu_1\ldots\hat \mu_s}}{u^{a_1\ldots a_r|\hat\mu_1\ldots\hat \mu_s}}\,, \quad j=1,\ldots,r\,, \ \nu=1,\ldots,m\,,\vspace{2em}\\
    \label{eq.whbb}
    w_{\hb}{}^b&=\frac{u^{*a_1\ldots a_r|\hmu_1\ldots\hmu_{\beta-1}b\hmu_{\beta+1}\ldots\hmu_s}}{u^{*a_1\ldots a_r|\hmu_1\ldots\hmu_s}}\,, \quad \be=1,\ldots,s\,, \ b=1,\ldots,n\,,\vspace{2em}\\
    \label{eq.whbhnu}
    w_{\hb}{}^{\hnu}&=\frac{u^{*a_1\ldots a_r|\hmu_1\ldots\hmu_{\beta-1}\hnu\hmu_{\beta+1}\ldots\hmu_s}}{u^{*a_1\ldots a_r|\hmu_1\ldots\hmu_s}}\,, \quad \be=1,\ldots,s\,, \ \nu=1,\ldots,m\,.
\end{align}
Here $w_j{}^b$ etc. are the matrix entries of the matrix $W=(U^{\aund})^{-1}U$ and we have introduced  
notation for the minors and ``wrong'' minors (with one ``ghost'' column) of the matrix $U$\,:
\begin{equation}\label{eq.ua1mus}
    u^{a_1\ldots a_r|\hmu_1\ldots\hmu_s}:=\Ber U^{a_1\ldots a_r|\hmu_1\ldots\hmu_s}=\pl(U)(e^{a_1},\ldots,e^{a_r}|e^{\hmu_1},\ldots,e^{\hmu_s})\,,
\end{equation}
\begin{equation}\label{eq.usta1mus}
    u^{*a_1\ldots a_r|\hmu_1\ldots\hmu_s}:=\Ber^* U^{a_1\ldots a_r|\hmu_1\ldots\hmu_s}
    =\pl^*(U)(e^{a_1},\ldots,e^{a_r}|e^{\hmu_1},\ldots,e^{\hmu_s})\,,
\end{equation}
\begin{multline}\label{eq.ua1numus}
    u^{a_1\ldots a_{j-1}\hnu a_{j+1}\ldots a_r|\hmu_1\ldots\hmu_s}:=\Ber U^{a_1\ldots a_{j-1}\hnu a_{j+1}\ldots a_r|\hmu_1\ldots\hmu_s}\\
    =\pl(U)(e^{a_1},\ldots,e^{a_{j-1}},e^{\hnu}, e^{a_{j+1}},\ldots, e^{a_r}|e^{\hmu_1},\ldots,e^{\hmu_s})\,,
\end{multline}
\begin{multline}\label{eq.usta1bmus}
    u^{*a_1\ldots a_r|\hmu_1\ldots \hmu_{\be-1}b \hmu_{\be+1}\ldots \hmu_s}:=\Ber^* U^{a_1\ldots a_r|\hmu_1\ldots \hmu_{\be-1}b \hmu_{\be+1}\ldots \hmu_s}\\
    =\pl^*(U)(e^{a_1},\ldots,e^{a_r}|e^{\hmu_1},\ldots,e^{\hmu_{\be-1}},e^b,e^{\hmu_{\be+1}},\ldots,e^{\hmu_s})\,.
\end{multline}
Here $U^{a_1\ldots a_r|\hmu_1\ldots\hmu_s}$ is the   submatrix of the matrix $U$ obtained by choosing $r$ even and $s$ odd columns with the indicated indices. It is an even square matrix. Its Berezinian (or inverse Berezinian) therefore coincides with the evaluation of the Pl\"{u}cker transform (or the $\Pi$-dual Pl\"{u}cker transform) of $U$ at the basis covectors with the corresponding indices. 
At the same time, $U^{a_1\ldots a_{j-1}\hnu a_{j+1}\ldots a_r|\hmu_1\ldots\hmu_s}$ and $U^{a_1\ldots a_r|\hmu_1\ldots \hmu_{\be-1}b \hmu_{\be+1}\ldots \hmu_s}$ are ``wrong'' square matrices (neither even nor odd) obtained from the even matrix $U^{a_1\ldots a_r|\hmu_1\ldots\hmu_s}$ by replacing   its column  in the $j$th (even) position or the $\hat\be$th (odd) position, respectively, by a wrong parity column, the $\hnu$th column (odd) or the $b$th column (even) of the matrix $U$. By the properties of $\Ber$ and $\Ber^*$, the Berezinian (or the inverse Berezinian, respectively) of such a matrix makes sense and gives an odd value. (See also discussion in subsection~\ref{subsec.11in22}.)

Now we shall introduce \emph{auxiliary supermanifolds} denoted by $\pfin$ and $\pess$.

By definition, the supermanifold $\pfin$ is the weighted projective superspace with homogeneous coordinates 
\begin{equation}\label{eq.coorfinrs}
    \underbrace{u^{a_1\ldots a_r|\hmu_1\ldots\hmu_s}\ \text{(even)}\,, u^{a_1\ldots a_{r-1}\hnu|\hmu_1\ldots\hmu_s}\ \text{(odd)}}_{\text{weight $+1$}}\,,
    \underbrace{u^{*a_1\ldots a_r|\hmu_1\ldots\hmu_s}\ \text{(even)}\,, u^{*a_1\ldots a_{r}|b\hmu_1\ldots\hmu_{s-1}}\ \text{(odd)}}_{\text{weight $-1$}}\,.
\end{equation}
Here the symbols $u^{a_1\ldots a_r|\hmu_1\ldots\hmu_s}$, $u^{a_1\ldots a_{r-1}\hnu|\hmu_1\ldots\hmu_s}$, $u^{*a_1\ldots a_r|\hmu_1\ldots\hmu_s}$ and $u^{*a_1\ldots a_{r}|b\hmu_1\ldots\hmu_{s-1}}$ are initially defined for $a_1< \ldots<a_r$ and $\hmu_1<\ldots<\hmu_s$, while $\hnu\notin \{\hmu_1,\ldots,\hmu_s\}$, $b\notin \{a_1,\ldots,a_r\}$.

Likewise, the supermanifold $\pess$ is by definition the   projective superspace with homogeneous coordinates
\begin{equation}
    u^{a_1\ldots a_r}\ \text{(even)}\,, u^{a_1\ldots a_{r-1}\hmu}\ \text{(odd)}\,.
\end{equation}
Here $a_1< \ldots<a_r$.

For   convenience of notation, we extend the definition of $u^{a_1\ldots a_r|\hmu_1\ldots\hmu_s}$ and $u^{a_1\ldots \hnu \ldots a_{r-1}|\hmu_1\ldots\hmu_s}$ to arbitrary combinations of indices $a_1,\ldots,a_r,\hmu_1,\ldots,\hmu_s,\hnu$ and   arbitrary position of the index $\hnu$ by  the condition   of antisymmetry in the indices in the first $r$ even positions and in the last $s$ odd positions. Also,   the indices $\hmu_1,\ldots,\hmu_s$ must be all different, while some indices $a_1,\ldots,a_r$ may be coinciding (in which case the corresponding variables will be zero by the antisymmetry), and if $\hnu\in \{\hmu_1,\ldots,\hmu_s\}$, then
$u^{a_1\ldots \hnu \ldots a_{r-1}|\hmu_1\ldots\hmu_s}=0$. Similarly for $u^{*a_1\ldots a_r|\hmu_1\ldots\hmu_s}$ and $u^{*a_1\ldots a_{r}|\hmu_1\ldots b \ldots \hmu_{s-1}}$, i.e. antisymmetry inside each group of indices, the indices in the even positions must be distinct (but may coincide in the odd positions,   the corresponding variables equal to zero), and $u^{*a_1\ldots a_{r}|\hmu_1\ldots b \ldots \hmu_{s-1}}=0$ if $b\in \{a_1,\ldots,a_r\}$.

Also, the definition of the variables $u^{a_1\ldots a_r}$ and $u^{a_1\ldots a_{r-1}\hmu}$    for convenience of notation   is extended to arbitrary combinations of indices $a_1,\ldots,a_r$ and to arbitrary position of $\hmu$ by the condition of the antisymmetry in all indices.

\begin{proposition}
These formal properties of the variables $u^{a_1\ldots a_r|\hmu_1\ldots\hmu_s}$, etc., follow the properties of the corresponding minors (and ``wrong'' minors) of the matrix $U$.
\end{proposition}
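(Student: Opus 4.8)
The plan is to read off each of the claimed ``formal properties''\,---\,graded antisymmetry within the even index group and within the odd index group, vanishing when two indices of the same type coincide, and the vanishing of a ghost variable when its ghost index coincides with a genuine index of the matching parity\,---\,directly from the defining formulas \eqref{eq.ua1mus}--\eqref{eq.usta1bmus}, in which every variable is literally an (inverse) Berezinian of a square submatrix of $U$, equivalently an evaluation of $\pl(U)$ or $\pl^*(U)$ on basis covectors. Since $\pl(U)\in\L^{r|s}(V)$ and $\pl^*(U)\in\L^{s|r}(\Pi V)$ are Voronov--Zorich multivectors, they satisfy the covariance \eqref{eq.bercond} and are multilinear in the even, respectively odd, covector slots (subsection~\ref{subsec.ber}), so that the ghost substitutions are legitimate; these are the two features I would use throughout. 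Thus the proposition reduces to checking that genuine (ghost) Berezinian minors obey exactly the relations that were \emph{imposed} in the definition of $\pfin$.

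First I would treat antisymmetry. In \eqref{eq.bercond} take $g\in\GL(r|s)$ to be the permutation supermatrix transposing two even covector slots; then $\Ber g=-1$, so $\pl(U)$ is antisymmetric, in the graded sense, under interchange of two of its first $r$ arguments, while transposing two odd slots also gives $\Ber g=-1$ and hence antisymmetry in the last $s$ arguments. In terms of indices this is precisely the antisymmetry of $u^{a_1\ldots a_r|\hmu_1\ldots\hmu_s}$ in $a_1,\ldots,a_r$ and in $\hmu_1,\ldots,\hmu_s$ separately. The same permutations applied to $\pl^*(U)$ pick up the factor $\Ber^* g=(\Ber g)^{-1}=-1$, giving the corresponding statement for the starred variables, and since the multilinear extensions are governed by the same covariance the argument also covers the ghost variables \eqref{eq.ua1numus} and \eqref{eq.usta1bmus} within each parity block.

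Next, vanishing on a repeated index inside a single parity block follows from antisymmetry together with characteristic zero: two equal even (or two equal odd) arguments force $u=-u$, hence $u=0$. For the even block one may instead subtract one argument from the other by an elementary supermatrix, which lies in $\GL(r|s)$ and has $\Ber=1$, producing a zero even slot on which $\pl(U)$ vanishes by its homogeneity of degree $+1$ in even columns; note that this second route is \emph{not} available for the odd block, where the homogeneity is of degree $-1$, so there one must rely on antisymmetry.

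The step I expect to be the main obstacle is the one genuinely mixed vanishing, $u^{a_1\ldots\hnu\ldots a_r|\hmu_1\ldots\hmu_s}=0$ when $\hnu\in\{\hmu_1,\ldots,\hmu_s\}$, and its starred analogue. Here the submatrix contains the single odd column $U^{\hnu}$ twice, once as a ghost in an even slot $j$ and once as a genuine column in the odd slot $\be$; the two separate antisymmetries do not see this coincidence, and the elementary operation that would subtract one slot from the other fails to lie in $\GL(r|s)$ (it would require an even scalar in an odd off-diagonal position), so covariance cannot be invoked. I would therefore argue straight from the Schur-complement formula \eqref{eq:ber}. In block form the ghost column contributes its even-row part $c'$ to column $j$ of $M_{00}$ and to column $\be$ of $M_{01}$, and its odd-row part $c''$ to column $j$ of $M_{10}$ and to column $\be$ of $M_{11}$; since $c''$ is a column of $M_{11}$ one has $M_{11}^{-1}c''=e_{\be}$, whence column $j$ of $M_{00}-M_{01}M_{11}^{-1}M_{10}$ equals $c'-M_{01}e_{\be}=c'-c'=0$, so the numerator determinant, and hence the extended Berezinian, is zero. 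The identical computation with \eqref{eq.invber} disposes of the starred variables. Collecting the antisymmetry, the within-block repeated-index vanishing, and this mixed vanishing reproduces exactly the relations imposed in the definition of $\pfin$, which is the assertion of the proposition.
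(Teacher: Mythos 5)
Most of what you write is a correct (and much more detailed) elaboration of the paper's own proof, which is literally the one line ``By the properties of Berezinian and determinant'': antisymmetry within each parity block follows from the covariance \eqref{eq.bercond} with transposition matrices (for which $\Ber g=\Ber^*g=-1$), and your Schur-complement computation for the mixed vanishing $u^{a_1\ldots a_{j-1}\hnu a_{j+1}\ldots a_r|\hmu_1\ldots\hmu_s}=0$ when $\hnu\in\{\hmu_1,\ldots,\hmu_s\}$ is exactly right and is the only genuinely nontrivial check: since $c''$ is column $\be$ of $M_{11}$, one has $M_{11}^{-1}c''=e_{\be}$, so column $j$ of $M_{00}-M_{01}M_{11}^{-1}M_{10}$ is $c'-c'=0$ and the numerator of \eqref{eq:ber} dies; dually with \eqref{eq.invber} for the starred variables.

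However, one step is wrong: the claim that two equal \emph{odd} arguments of $\pl(U)$ force $u=-u$ and hence $u=0$ (and, by the same logic, two equal even arguments of $\pl^*(U)$). The relation $u=-u$ presupposes that the value exists, and here it does not: $\pl(U)(P)=\Ber(UP)$ is homogeneous of degree $-1$ in each odd column and is defined only for linearly independent odd covectors. If two odd columns of $P$ coincide, then in \eqref{eq:ber} applied to $g=UP$ both $g_{11}$ and $g_{01}$ acquire a pair of equal columns, so the denominator $\det\left(g_{11}-g_{10}g_{00}^{-1}g_{01}\right)$ vanishes (equivalently $g_{11}$ is not invertible in the other Schur formula), and $\Ber(UP)$ has a pole there rather than a zero. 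This is precisely why the definition of $\pfin$ treats the two blocks asymmetrically: coinciding indices among $a_1,\ldots,a_r$ give variables ``zero by the antisymmetry,'' while the indices $\hmu_1,\ldots,\hmu_s$ are required to be \emph{all different} --- symbols with a repeated odd index are excluded from the coordinate system, not set to zero (and dually for $u^{*}$, where the even indices must be distinct while odd coincidences do give zero, via two equal columns in the numerator of $\Ber^*$). So your opening list of ``formal properties'' misstates what is to be verified: vanishing on a repeated index is imposed in only one parity block per transform, and for the other block the matching property of the minors is that they are likewise undefined. With that correction --- noting that in the even block of $\pl(U)$ both of your routes, antisymmetry or an elementary column operation plus degree-$+1$ homogeneity, do yield genuine vanishing --- the rest of your argument stands and coincides in approach with the paper's.
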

\begin{proof} By the properties of Berezinian and determinant.
\end{proof}

We call the variables $u^{a_1\ldots a_r|\hmu_1\ldots\hmu_s}$, $u^{a_1\ldots \hnu \ldots a_{r-1}|\hmu_1\ldots\hmu_s}$, $u^{*a_1\ldots a_r|\hmu_1\ldots\hmu_s}$ and $u^{*a_1\ldots a_{r}|\hmu_1\ldots b \ldots\hmu_{s-1}}$  the  \textbf{{essential  super  Pl\"{u}cker coordinates}} for the super Grassmannian $G_{r|s}(V)$  and the variables  $u^{a_1\ldots a_r}$ and  $u^{a_1\ldots \hmu \ldots a_{r-1}}$ the \textbf{{essential super  Pl\"{u}cker coordinates}} for the super Grassmannian $G_{r}(V)$.

Define the mappings (which we denote by the same symbols $\Pl$ and $\pl$)
\begin{equation}\label{eq.splueckfin}
    \Pl\co G_{r|s}(V)\to \pfin\,,
\end{equation}
given by formulas~\eqref{eq.ua1mus}, \eqref{eq.usta1mus}, \eqref{eq.ua1numus} and \eqref{eq.usta1bmus}, and
\begin{equation}\label{eq.splueckess}
    \pl\co G_{r}(V)\to \pess\,,
\end{equation}
given by (the similar) formulas $u^{a_1\ldots a_r}=\det U^{a_1\ldots a_r}$ and $u^{a_1\ldots  \hnu  \ldots a_r} =\det U^{a_1\ldots  \hnu  \ldots a_r}$.

\begin{proof}[Proof of Theorem~\ref{thm.embedgen}]
There is a commutative diagram
\begin{equation}\label{eq.cdpfin}
    \begin{tikzcd}
    G_{r|s}(V) \arrow{r}{\Pl} \arrow[swap]{dr}{\Pl} & P_{1,-1}\left(\L^{r|s}(V)\oplus \L^{s|r}(\Pi V)\right) \arrow{d}{ } \\
     & \pfin
  \end{tikzcd}
\end{equation}
Thus it suffices to show that the map~\eqref{eq.splueckfin} has an inverse on its image. We can define a ``local'' inverse by  formulas~\eqref{eq.wjb}, \eqref{eq.wjhnu}, \eqref{eq.whbb}, \eqref{eq.whbhnu} in the domain of $\pfin$ where $u^{a_1\ldots a_r|\hmu_1\ldots\hmu_s}$ and $u^{*a_1\ldots a_r|\hmu_1\ldots\hmu_s}$ are invertible. The image of the $\aund$th  coordinate domain of $G_{r|s}(V)$ is contained in it (where $\aund={a_1\ldots a_r|\hmu_1\ldots\hmu_s}$). Note that the matrix $W$ given by these formulas does not equal  $U$, but $W\sim U$. On the intersection of any two such domains, we obtain $W\sim U$ and $W'\sim U$. Hence these ``local'' inverses glue into a global inverse map  $\Pl^{-1}$ from the image of $\Pl$ to the super Grassmannian.
\end{proof}

\begin{proof}[Proof of Theorem~\ref{thm.embedalg}]
Similarly with the previous proof,  consider a
commutative diagram
\begin{equation}\label{eq.cdpess}
    \begin{tikzcd}
    G_{r}(V) \arrow{r}{\pl} \arrow[swap]{dr}{\pl} & P\left(\L^{r}(V)\right) \arrow{d}{ } \\
     & \pess
  \end{tikzcd}
\end{equation}
``Local inverses'' to the map $\pl\co G_{r}(V)\to \pess$ defined on its image are given by the formulas
\begin{align*}
    w_j{}^b&=\frac{u^{a_1\ldots a_{j-1}ba_{j+1}\ldots a_r}}{u^{a_1\ldots a_r}}\,, \quad j=1,\ldots,r\,, \ b=1,\ldots,n\,,\vspace{2em}\\
    \label{eq.wjhnu}
    w_j{}^{\hnu}&=\frac{u^{a_1\ldots a_{j-1}\hnu a_{j+1}\ldots a_r}}{u^{a_1\ldots a_r}}\,, \quad j=1,\ldots,r\,, \ \nu=1,\ldots,m\,,
\end{align*}
(in the domain where $u^{a_1\ldots a_r}$ is invertible)
which are a particular case of~\eqref{eq.wjb}\,--\,\eqref{eq.whbhnu}. By the same argument, they glue together into a single inverse map for $\pl$ from the image of  $\pl\co G_{r}(V)\to \pess$ to the super Grassmannian $G_{r}(V)$. Hence this map is an embedding and the map $\pl\co G_{r}(V)\to P\left(\L^{r}(V)\right)$ is also an embedding.
\end{proof}

\begin{remark}
The vertical arrows in the commutative diagrams in~\eqref{eq.cdpfin} and~\eqref{eq.cdpess} are rational maps. The domain of definition of the projection $P\left(\L^{r}(V)\right)\to \pess$ is exactly the projectivization of the subset of non-degenerate $r$-vectors considered in Section~\ref{sec.simpl}.
\end{remark}

\begin{proposition}
In the algebraic case $s=0$, the variables $u^{a_1\ldots a_r}$ and $u^{a_1\ldots  a_{r-1}\hmu}$  evaluated on $G_r(V)$ coincide  up to the factor of $r!$  with the corresponding components $T^{a_1\ldots a_r}$ and $T^{a_1\ldots  a_{r-1}\hmu}$ of the $r$-vector $T=\up_1\wed\ldots\wed \up_r$ considered in the previous section.
\end{proposition}
\begin{proof} Directly.
\end{proof}

The multivector $T$ has other components, with more than one odd index; however, they are not needed for the inversion of the super Pl\"{u}cker map.  In subsection~\ref{subsec.ex}, we also showed that they can be excluded algebraically from the full set of the super Pl\"{u}cker relations.  This justifies the   terminology ``essential super Pl\"{u}cker coordinates''.

\begin{remark}
In the general case, the situation is more delicate. The  essential super Pl\"{u}cker coordinates  $u^{a_1\ldots a_r|\hmu_1\ldots\hmu_s}$, $u^{a_1\ldots a_{r-1}\hnu|\hmu_1\ldots\hmu_s}$, $u^{*a_1\ldots a_r|\hmu_1\ldots\hmu_s}$ and $u^{*a_1\ldots a_{r}|b\hmu_1\ldots\hmu_{s-1}}$ are not    ``components''  of the $r|s$-vector $F=[\up_1\ldots,\up_r|\up_{\hat 1},\ldots,\up_{\hat s}]$ in the same sense as $u^{a_1\ldots a_r}$ and $u^{a_1\ldots  a_{r-1}\hmu}$   for $s=0$. But as the evaluations of $F$ or $F^*=1/F$ at particular combinations of basis covectors, they make $\pfin$  a  finite-dimensional quotient  of the space $P_{1,-1}\left(\L^{r|s}(V)\oplus \L^{s|r}(\Pi V)\right)$. (Strictly speaking  $\pess$ and $\pfin$ are   quotients of certain dense open domains.)
\end{remark}

\subsection{Super Pl\"{u}cker relations for essential super Pl\"{u}cker coordinates} \label{subsec.genrelat} We want now to find relations (for the essential super Pl\"{u}cker coordinates)
that  specify the image of $G_{r|s}(V)$ in $\pfin$ and   the image of $G_{r}(V)$ in $\pess$. In the latter case, it will be interesting to compare with what we have obtained in Section~\ref{sec.simpl} by the algebraic method. Our strategy is based on the following simple statement about maps.

\begin{proposition}
Let $f\co A\to B$ and $g\co B\to A$ be maps of sets in the opposite directions.  Let $g$ restricted to $\Im f\subset B$ be the inverse for  the corestriction of  $f$, i.e., $g\circ f=\id_A$. Then the equation $(f\circ g)(b)=b$ for the unknown $b\in B$ specifies $\Im f$ as a subset of $B$.
\end{proposition}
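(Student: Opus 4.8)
The plan is to prove the set equality $\{b\in B : (f\circ g)(b) = b\} = \Im f$ by establishing the two inclusions separately. The only hypothesis I will genuinely consume is $g\circ f = \id_A$, which records that $g$ inverts $f$ on the image. I would first stress a small bookkeeping point: $g$ is defined on all of $B$ (not merely on $\Im f$), so that $(f\circ g)(b)$ is meaningful for every $b\in B$ and the fixed-point condition $(f\circ g)(b)=b$ is a well-posed condition on $B$.

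First I would check that every point of $\Im f$ is a fixed point of $f\circ g$. Take $b\in\Im f$, so that $b=f(a)$ for some $a\in A$. Then
\begin{equation*}
    (f\circ g)(b) = f\bigl(g(f(a))\bigr) = f\bigl((g\circ f)(a)\bigr) = f(a) = b,
\end{equation*}
where the middle equality is exactly the hypothesis $g\circ f=\id_A$. Thus $b$ satisfies the stated condition, giving $\Im f \subseteq \{b : (f\circ g)(b)=b\}$.

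For the reverse inclusion I would argue directly from the definition of the image. Suppose $(f\circ g)(b)=b$ for some $b\in B$, and set $a:=g(b)\in A$. The condition then reads $f(a)=b$, so $b$ is a value of $f$, i.e. $b\in\Im f$. This yields $\{b : (f\circ g)(b)=b\}\subseteq \Im f$, and combining the two inclusions completes the argument.

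I do not anticipate any genuine obstacle, since the statement is a purely formal consequence of the two assumptions; the content is entirely in identifying which hypothesis is used where. The reverse inclusion needs nothing beyond the definition of $\Im f$ (the fixed-point equation already displays $b$ as $f(g(b))$), while the forward inclusion is precisely the step that exhausts the inverse property $g\circ f=\id_A$. The one place warranting a line of care is making explicit that $g$ is globally defined on $B$, so that the right-hand side really does cut out a subset of $B$ rather than of $\Im f$.
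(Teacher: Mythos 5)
Your proof is correct and is exactly the standard two-inclusion argument that the paper leaves implicit (its proof reads, in full, ``Obvious''): the forward inclusion consumes $g\circ f=\id_A$, and the reverse inclusion is immediate from the fixed-point equation exhibiting $b=f(g(b))$. Your added remark that $g$ must be globally defined on $B$ for the condition to be well-posed is a sensible piece of bookkeeping, but there is nothing here that diverges from the intended argument.
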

\begin{proof} Directly.
\end{proof}
We shall apply this argument to the map
\begin{equation*}
    \Pl\co G_{r|s}\to \pfin
\end{equation*}
given by  formulas~\eqref{eq.ua1mus}\,--\,\eqref{eq.usta1bmus} and the map that we used before without any notation and that we shall now denote $\Pl^{-1}_{(\aund)}$,
\begin{equation*}
    \Pl^{-1}_{(\aund)}\co \pfin\to G_{r|s}(V)\,,
\end{equation*}
for some fixed $\aund=a_1\ldots a_r|\hmu_1\ldots\hmu_s$,
defined on the domain where the variables $u^{a_1\ldots a_r|\hmu_1\ldots\hmu_s}$ and $u^{*a_1\ldots a_r|\hmu_1\ldots\hmu_s}$  are invertible  and given by~\eqref{eq.wjb}\,--\,\eqref{eq.whbhnu}. Then $\Pl^{-1}_{(\aund)}\circ \Pl=\id_{G_{r|s}(V)}$ by the super Cramer rule:  in the matrix language, the composition gives $U\mapsto  (U^{\aund})^{-1}U\sim U$. This is what we have used in the previous subsection. But the composition in the opposite order, $\Pl\circ \Pl^{-1}_{(\aund)}$ is not   identity on   $\pfin$. The requirement for it being identity gives equations specifying the image of $\Pl$ in $\pfin$. The same argument applies for the simpler case of $\pl\co G_r(V)\to P(\L^r(V))$.

To see how it practically works and leads to  the desired super  Pl\"{u}cker relations, consider first the familiar classical case.

\begin{example} Consider $G_2(V)$ for $\dim V=n$. There are Pl\"{u}cker coordinates $u^{ab}$, $a,b=1,\ldots,n$. We consider them as homogeneous coordinates on the projective space $P(\L^2(V))$. Fix some pair $\aund=ab, a<b$. In the domain where $u^{ab}\neq 0$, we have the ``inverse Pl\"{u}cker map'' $\pl_{(\aund)}^{-1}$ that sends the collection of variables $u^{cd}$, $c,d=1,\ldots,n$, to a $2\times n$ matrix $U'$ with the entries $u'_1{}^c=u^{cb}/u^{ab}$ and $u'_2{}^c=u^{ac}/u^{ab}$. (Here we use $U'$ instead of $W$.) The requirement that $\pl\circ \pl^{-1}_{(\aund)}=\id$ is equivalent to ${u'}{}^{cd}=\la\, u^{cd}$ for all $c,d=1,\ldots,n$ with some factor $\la$ independent of $c$ and $d$. Here ${u'}{}^{cd}=\det U'{}^{cd}$. Explicitly:
\begin{equation*}
    \begin{vmatrix}
      u^{cb}/u^{ab} & u^{db}/u^{ab} \\
      u^{ac}/u^{ab} & u^{ad}/u^{ab} \\
    \end{vmatrix}=\la u^{cd}
\end{equation*}
for all $c,d$. When $cd=ab$, this gives $1=\la\,u^{ab}$. So $\la=(u^{ab})^{-1}$ and we obtain
\begin{equation*}
   (u^{ab})^{-2} \begin{vmatrix}
      u^{cb}  & u^{db}  \\
      u^{ac}  & u^{ad}  \\
    \end{vmatrix}=(u^{ab})^{-1} u^{cd}
\end{equation*}
or, by getting rid of the denominator,
\begin{equation*}
  \begin{vmatrix}
      u^{cb}  & u^{db}  \\
      u^{ac}  & u^{ad}  \\
    \end{vmatrix}= u^{ab}u^{cd}\,,
\end{equation*}
which is exactly the classical three-term Pl\"{u}cker relation
\begin{equation*}
    u^{ab}u^{cd}=u^{ac}u^{bd}+u^{ad}u^{cb}\,.
\end{equation*}
Note that we initially worked in the domain where $u^{ab}$ is invertible, but the final form of the relation does not depend on this assumption, and it has the same form in all such domains, hence   specifies the image of the Pl\"{u}cker map on the whole $P(\L^2(V))$.
\end{example}

For $r>2$, we shall see that an additional argument is needed in order to obtain the classical form of the Pl\"{u}cker relations:

\begin{example}
\label{ex.essrinn}
Consider now $G_r(V)$ for $\dim V=n$ when $r>2$. By arguing as above, we have the direct Pl\"{u}cker map $\pl$, which sends (the equivalence class of) an $r\times n$ matrix $U$ to (the equivalence class of) an array of homogeneous coordinates $u^{\aund}$, $\aund=a_1\ldots a_r$, by the formula $u^{\aund}=\det U^{\aund}$, for all $\aund$, and the (locally defined) inverse Pl\"{u}cker map $\pl_{(\aund)}^{-1}$ defined in the open domain where the variable $u^{\aund}$ is invertible, for a fixed chosen ${\aund}$, which sends the variables $u^{\bund}$, for all $\bund=b_1\ldots b_r$, to (the equivalence class of) a matrix $U'$, whose matrix entries are
$u'_i{}^b=u^{a_1\ldots a_{a_{i-1}ba_{i+1}}\ldots a_r}/u^{a_1\ldots a_r}$, where $i=1,\ldots,r$ and $b=1,\ldots,n$. The   requirement that $\pl\circ \pl^{-1}_{(\aund)}=\id$ given by the relation
\begin{equation*}
    (u^{a_1\ldots a_r})^{-r}\begin{vmatrix}
                              u^{b_1a_2\ldots a_r} & u^{b_2a_2\ldots a_r} & \ldots & u^{b_ra_2\ldots a_r}\\
                              u^{a_1b_1\ldots a_r} & u^{a_1b_2\ldots a_r} & \ldots & u^{a_1b_r\ldots a_r}\\
                              \ldots               & \ldots               & \ldots & \ldots    \\
                              u^{a_1a_2\ldots b_1} & u^{a_1a_2\ldots b_2} & \ldots & u^{a_1a_2\ldots b_r}
                            \end{vmatrix}=
                            \la\,u^{b_1\ldots b_r}
\end{equation*}
for all $b_1\ldots b_r$, with some factor $\la$ to be determined. As above, by using $\bund=\aund$, we find $\la=(u^{a_1\ldots a_r})^{-1}$. By substituting it   and getting rid of the denominator, we arrive at the relations
\begin{equation*}
                \begin{vmatrix}
                              u^{b_1a_2\ldots a_r} & u^{b_2a_2\ldots a_r} & \ldots & u^{b_ra_2\ldots a_r}\\
                              u^{a_1b_1\ldots a_r} & u^{a_1b_2\ldots a_r} & \ldots & u^{a_1b_r\ldots a_r}\\
                              \ldots               & \ldots               & \ldots & \ldots    \\
                              u^{a_1a_2\ldots b_1} & u^{a_1a_2\ldots b_2} & \ldots & u^{a_1a_2\ldots b_r}
                            \end{vmatrix}=
                            (u^{a_1\ldots a_r})^{r-1}\,u^{b_1\ldots b_r}
\end{equation*}
for all $a_1\ldots a_r$ and $b_1\ldots b_r$, that specify  the image of $\pl$ in $P(\L^r(V))$. This is a set of polynomial equations of degree $r$, not of degree  $2$  as one would   expect from the exterior algebra arguments. 
However, still acting in the domain where the variable $u^{\aund}$ is invertible, we shall show now how to reduce the degree. Substitute in the above relation $b_1:=a_1$. Then in the first row of the determinant all elements but $u^{b_1a_2\ldots a_r}$ that will become $u^{a_1a_2\ldots a_r}$, will vanish, the determinant will factorize, and the relation will give
\begin{equation*}
    \cof(u^{b_1a_2\ldots a_r})=(u^{a_1\ldots a_r})^{r-2}\,u^{a_1b_2\ldots b_r}\,.
\end{equation*}
Acting in the similar way and substituting $b_2:=a_1$ into the relation, we obtain also
\begin{equation*}
    \cof(u^{b_2a_2\ldots a_r})=(u^{a_1\ldots a_r})^{r-2}\,u^{b_1a_1b_3\ldots b_r}\,.
\end{equation*}
And so on, for all $j=1,\ldots,r$, we obtain
\begin{equation*}
    \cof(u^{b_ja_2\ldots a_r})=(u^{a_1\ldots a_r})^{r-2}\,u^{b_1\ldots a_1\ldots b_r}\,.
\end{equation*}
with $a_1$ at the $j$th place at the right-hand side. Substituting these expressions for the cofactors of the elements of the first row in the determinant, we arrive at the equation
\begin{equation*}
    \sum_{j=1}^{r} u^{b_ja_2\ldots a_r}(u^{a_1\ldots a_r})^{r-2}u^{b_1\ldots b_{j-1}a_1b_{j+1}\ldots b_r}=(u^{a_1\ldots a_r})^{r-1}\,u^{b_1\ldots b_r}\,.
\end{equation*}
After reducing by  the common factor of $(u^{a_1\ldots a_r})^{r-2}$, this finally gives the desired  classical quadric Pl\"{u}cker relation, in the form
\begin{equation*}
    u^{a_1\ldots a_r}u^{b_1\ldots b_r}=\sum_{j=1}^{r} u^{b_ja_2\ldots a_r}u^{b_1\ldots b_{j-1}a_1b_{j+1}\ldots b_r}\,,
\end{equation*}
for all $a_1\ldots a_r$ and $b_1\ldots b_r$.
\end{example}

We shall now show that the idea used in Example~\ref{ex.essrinn} carries over to   $r$-planes in the superspace.

\begin{theorem}[``super Pl\"{u}cker relations for essential variables for $r$-planes in   $n|m$-space'']
\label{thm.essplueckr0}
On $\pessth$, the super Pl\"{u}cker relations take the form
\begin{align}\label{eq.esssplueckev}
    u^{a_1\ldots a_r}u^{b_1\ldots b_r}&=\sum_{j=1}^{r} u^{b_ja_2\ldots a_r}u^{b_1\ldots b_{j-1}a_1b_{j+1}\ldots b_r} &\ \text{\emph{(even)}}
    \\
    \label{eq.esssplueckod}
    u^{a_1\ldots a_r}u^{b_1\ldots b_{r-1}\hmu}&=\sum_{j=1}^{r-1} u^{b_ja_2\ldots a_r}u^{b_1\ldots b_{j-1}a_1b_{j+1}\ldots b_{r-1}\hmu}+ u^{\hmu a_2\ldots a_r}u^{b_1\ldots b_{r-1}a_1} &\ \text{\emph{(odd)}}
\end{align}
for all $a_1,\ldots,a_r$, $b_1,\ldots,b_r$ in~\eqref{eq.esssplueckev} and all  $a_1,\ldots,a_r$, $b_1,\ldots,b_{r-1}$ and all $\hmu$  in~\eqref{eq.esssplueckod}.
\end{theorem}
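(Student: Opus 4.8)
The plan is to reuse, almost verbatim, the inversion argument of Example~\ref{ex.essrinn}, now carried out over the superspace $V$ of dimension $n|m$ while keeping track of the two types of essential coordinates. I would apply the inversion criterion (the Proposition preceding Theorem~\ref{thm.essplueckr0}) to the direct map $\pl\co G_{r}(V)\to \pess$, which sends the class of a matrix $U$ to $u^{a_1\ldots a_r}=\det U^{a_1\ldots a_r}$ (even) and $u^{a_1\ldots a_{r-1}\hmu}=\det U^{a_1\ldots a_{r-1}\hmu}$ (odd, a ``wrong'' minor), and to the local inverse $\pl^{-1}_{(\aund)}$ defined, where $u^{\aund}$ is invertible, by the Cramer formulas $w_j{}^b=u^{a_1\ldots a_{j-1}ba_{j+1}\ldots a_r}/u^{\aund}$ and $w_j{}^{\hnu}=u^{a_1\ldots a_{j-1}\hnu a_{j+1}\ldots a_r}/u^{\aund}$. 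Since $\pl^{-1}_{(\aund)}\circ\pl=\id$, the image of $\pl$ is cut out by $\pl\circ\pl^{-1}_{(\aund)}=\id$, i.e. by demanding that the essential coordinates of $W=\pl^{-1}_{(\aund)}(u)$ equal $\la$ times the original ones for a single even invertible $\la$; specializing the even coordinate at $b_1\ldots b_r=a_1\ldots a_r$ forces $\la=(u^{\aund})^{-1}$, and the same $\la$ then governs the odd coordinates.

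For the even relations the computation is literally that of Example~\ref{ex.essrinn}: $\det W^{b_1\ldots b_r}$ involves only the even columns of $W$, so it is an ordinary determinant, and the condition $\det W^{b_1\ldots b_r}=(u^{\aund})^{-1}u^{b_1\ldots b_r}$ becomes, after clearing $(u^{\aund})^{-r}$, the degree-$r$ determinant identity. Its cofactor reduction (substitute $b_j:=a_1$ to get $\cof(u^{b_ja_2\ldots a_r})=(u^{\aund})^{r-2}u^{b_1\ldots a_1\ldots b_r}$, then expand along the first row and cancel $(u^{\aund})^{r-2}$) yields exactly~\eqref{eq.esssplueckev}.

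For the odd relations I would note that $\det W^{b_1\ldots b_{r-1}\hmu}$ is the column determinant of an $r\times r$ matrix whose first $r-1$ columns ($b_1,\ldots,b_{r-1}$) are even and whose last column ($\hmu$) is odd. Factoring the even scalar $(u^{\aund})^{-1}$ out of each column (sign-free, $u^{\aund}$ being even) reduces the identity to $D=(u^{\aund})^{r-1}u^{b_1\ldots b_{r-1}\hmu}$, where $D$ is the column determinant with entries $u^{a_1\ldots a_{i-1}b_k a_{i+1}\ldots a_r}$ and, in the last column, $u^{a_1\ldots a_{i-1}\hmu a_{i+1}\ldots a_r}$. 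Expanding $D$ along the first row, the $r-1$ even-column cofactors reduce as before, giving $\cof(u^{b_ka_2\ldots a_r})=(u^{\aund})^{r-2}u^{b_1\ldots b_{k-1}a_1 b_{k+1}\ldots b_{r-1}\hmu}$, while the odd-column cofactor is obtained by forming the auxiliary even determinant with a general even index $c$ in the last column, specializing $c:=a_1$ so that all but its top entry vanish, and reading off $\cof(u^{\hmu a_2\ldots a_r})=(u^{\aund})^{r-2}u^{b_1\ldots b_{r-1}a_1}$. Substituting both and dividing by $(u^{\aund})^{r-2}$ produces~\eqref{eq.esssplueckod}, the extra term $u^{\hmu a_2\ldots a_r}u^{b_1\ldots b_{r-1}a_1}$ arising precisely from the odd column.

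The main obstacle I anticipate is the sign bookkeeping in the row expansion of the column determinant $D$: expanding along a row mixes columns of different parity, and a priori moving the odd first-row entry $u^{\hmu a_2\ldots a_r}$ to the front past the other factors could generate signs. The point that makes the plan succeed is that in this parity pattern the unique odd column is the last one, so the odd entry only ever passes even entries and no sign appears; hence $D$ obeys the ordinary Laplace expansion and the cofactor reductions are identical to the classical ones. Finally, as in the even case, I would observe that although the calculation is performed in the chart where $u^{\aund}$ is invertible, the resulting relations~\eqref{eq.esssplueckev}--\eqref{eq.esssplueckod} are homogeneous and chart-independent, and therefore specify the image of $\pl$ on all of $\pess$.
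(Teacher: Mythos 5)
Your proposal is correct and takes essentially the same route as the paper's own proof: the same inversion criterion applied to $\pl\co G_r(V)\to \pess$ with the local inverse $\pl^{-1}_{(\aund)}$, the same determination of the factor $\la=(u^{\aund})^{-1}$ by specializing $\bund:=\aund$, and the same cofactor reduction via the substitutions $b_j:=a_1$ — including your device of an auxiliary even determinant with a general index $c$ in the last column specialized at $c:=a_1$ to extract the odd-column cofactor $(u^{\aund})^{r-2}\,u^{b_1\ldots b_{r-1}a_1}$, which is exactly how the paper obtains it. Your explicit check that the column determinant with a single odd last column obeys the ordinary sign-free Laplace expansion is a detail the paper leaves implicit, but it does not change the substance of the argument.
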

\begin{proof}
We can write an $r\times n|m$ matrix $U$ as $U=(u_i{}^a|u_i{}^{\hmu})$, where $i=1,\ldots,r$, $=1,\ldots,n$, and $\mu=1,\ldots, m$. The direct Pl\"{u}cker map $\pl$ sends it to   homogeneous Pl\"{u}cker coordinates $u^{a_1\ldots a_r}$ (even), $u^{a_1\ldots a_{r-1}\hmu}$ (odd) by $u^{a_1\ldots a_r}=\det U^{a_1\ldots a_r}=\det (U^{a_1}\ldots U^{a_r})$ and
$u^{a_1\ldots a_{r-1}\hmu}= \det (U^{a_1}\ldots U^{a_r-1} U^{\hmu})$. The locally defined ``inverse Pl\"{u}cker map'' $\pl_{(\aund)}^{-1}$ defined in the domain where $u^{a_1\ldots a_r}$ is invertible, for some  fixed $a_1<\ldots <a_r$, maps a point  of $\pess$ specified by homogeneous Pl\"{u}cker coordinates $u^{a_1\ldots a_r}, u^{a_1\ldots a_{r-1}\hmu}$ to the matrix $U'$ with the matrix entries  $u'_i{}^b=u^{a_1\ldots a_{a_{i-1}ba_{i+1}}\ldots a_r}/u^{a_1\ldots a_r}$ and $u'_i{}^{\hmu}=u^{a_1\ldots a_{a_{i-1}\hmu a_{i+1}}\ldots a_r}/u^{a_1\ldots a_r}$. The composition $\pl_{(\aund)}^{-1}\circ \pl$ sends $U$ to $U'=(U^{\aund})^{-1}U\sim U$, so is the identity on  the corresponding open domain of  the super Grassmannian  $G_r(V)$. At the same time, the requirement for the composition in the opposite order $\pl\circ \pl_{(\aund)}^{-1}$ to be the identity map specifies the points of $\pess$ that are in the image of the super Pl\"{u}cker map. We get the conditions:
\begin{equation*}
    (u^{a_1\ldots a_r})^{-r}\begin{vmatrix}
                              u^{b_1a_2\ldots a_r} & u^{b_2a_2\ldots a_r} & \ldots & u^{b_ra_2\ldots a_r}\\
                              u^{a_1b_1\ldots a_r} & u^{a_1b_2\ldots a_r} & \ldots & u^{a_1b_r\ldots a_r}\\
                              \ldots               & \ldots               & \ldots & \ldots    \\
                              u^{a_1a_2\ldots b_1} & u^{a_1a_2\ldots b_2} & \ldots & u^{a_1a_2\ldots b_r}
                            \end{vmatrix}=
                            \la\,u^{b_1\ldots b_r}
\end{equation*}
and
\begin{equation*}
    (u^{a_1\ldots a_r})^{-r}
    \begin{vmatrix}
        u^{b_1a_2\ldots a_r} & u^{b_2a_2\ldots a_r} & \ldots & u^{b_{r-1}a_2\ldots a_r} & u^{\hmu a_2\ldots a_r}\\
        u^{a_1b_1\ldots a_r} & u^{a_1b_2\ldots a_r} & \ldots & u^{a_1b_{r-1}\ldots a_r}& u^{a_1\hmu \ldots a_r}\\
        \ldots               & \ldots               & \ldots & \ldots    \\
        u^{a_1a_2\ldots b_1} & u^{a_1a_2\ldots b_2} & \ldots & u^{a_1a_2\ldots b_{r-1}}& u^{a_1a_2\ldots \hmu}
    \end{vmatrix}=
    \la\,u^{b_1\ldots b_{r-1}\hmu}\,,
\end{equation*}
for all $b_1\ldots b_r$ and $\hmu$, with a universal factor $\la$ to be determined. Substituting $b_1\ldots b_r=a_1\ldots a_r$  to the first equality  gives, as above,   $\la =(u^{a_1\ldots a_r})^{-1}$. This gives relations in the form of the two equalities,
\begin{equation*}
    \begin{vmatrix}
    u^{b_1a_2\ldots a_r} & u^{b_2a_2\ldots a_r} & \ldots & u^{b_ra_2\ldots a_r}\\
                              u^{a_1b_1\ldots a_r} & u^{a_1b_2\ldots a_r} & \ldots & u^{a_1b_r\ldots a_r}\\
                              \ldots               & \ldots               & \ldots & \ldots    \\
                              u^{a_1a_2\ldots b_1} & u^{a_1a_2\ldots b_2} & \ldots & u^{a_1a_2\ldots b_r}
                            \end{vmatrix}=
                            (u^{a_1\ldots a_r})^{r-1}\,u^{b_1\ldots b_r}
\end{equation*}
and
\begin{equation*}
    \begin{vmatrix}
        u^{b_1a_2\ldots a_r} & u^{b_2a_2\ldots a_r} & \ldots & u^{b_{r-1}a_2\ldots a_r} & u^{\hmu a_2\ldots a_r}\\
        u^{a_1b_1\ldots a_r} & u^{a_1b_2\ldots a_r} & \ldots & u^{a_1b_{r-1}\ldots a_r}& u^{a_1\hmu \ldots a_r}\\
        \ldots               & \ldots               & \ldots & \ldots    \\
        u^{a_1a_2\ldots b_1} & u^{a_1a_2\ldots b_2} & \ldots & u^{a_1a_2\ldots b_{r-1}}& u^{a_1a_2\ldots \hmu}
    \end{vmatrix}=
    (u^{a_1\ldots a_r})^{r-1}\,u^{b_1\ldots b_{r-1}\hmu}\,.
\end{equation*}
As in the purely even example, we can reduce the degree by expressing the cofactors of the elements in the first row of each determinant from the same identities and reducing by the common factor. In order to do so, we apply  the first identity with the substitutions $b_{\ell} :=a_1$, for   $\ell=1\ldots r$, and similarly for the second identity, for   $\ell=1\ldots r-1$. This gives as the cofactors, respectively, $(u^{a_1\ldots a_r})^{r-2}\,u^{b_1\ldots b_{\ell-1}a_1b_{\ell+1}\ldots b_r}$, $\ell=1\ldots r$ (for the first  determinant) and $(u^{a_1\ldots a_r})^{r-2}\,u^{b_1\ldots b_{\ell-1}a_1b_{\ell+1}\ldots b_{r-1}\hmu}$, $\ell=1\ldots r-1$, and $(u^{a_1\ldots a_r})^{r-2}\,u^{b_1\ldots b_{r-1}a_1}$ (for the second  determinant). Expanding the determinants in the first row and reducing by the common factor of $(u^{a_1\ldots a_r})^{r-2}$ gives~\eqref{eq.esssplueckev} and \eqref{eq.esssplueckod}.
\end{proof}

We can compare Theorem~\ref{thm.essplueckr0} with Theorem~\ref{thm.reducplueck} from Section~\ref{sec.simpl}  for $r=2$.

In Theorem~\ref{thm.reducplueck}, we derived the condition of non-degeneracy for a bivector $T$ in the image of the super Pl\"{u}cker map from    algebraic relations involving all its components of $T$ and showed that it is possible to reduce the set of components regarded as super Pl\"{u}cker coordinates to the subset corresponding to what we called in this section   essential super Pl\"{u}cker coordinates  and accordingly to reduce the full set of relations  obtained by the algebraic method to a subset satisfied by these essential coordinates.

Here, we work only with the essential super Pl\"{u}cker coordinates from the start  (under the assumption of the non-degeneracy),  and acting by a different (actually, more direct) method, deduce that the super Pl\"{u}cker relations~\eqref{eq.esssplueckev} and \eqref{eq.esssplueckod} satisfied by essential super Pl\"{u}cker coordinates are the necessary and sufficient conditions specifying the image of the super Pl\"{u}cker map for $G_r(V)$.
In this way, this generalizes  Theorem~\ref{thm.reducplueck},  from $2$-planes  to, now, arbitrary $r$-planes in $n|m$-space. (It is worth noting that in the ordinary (not super) case  all Pl\"{u}cker coordinates are essential, so there is no need for a distinction and this notion does not arise.\footnote{Our direct method possibly gives also a new proof   for the classical Pl\"{u}cker relations;  except for $r=2$,  we could not find  such a proof in the literature.})

In the general case of $r|s$-planes,  relations for the essential super Pl\"{u}cker coordinates take a more complicated form in comparison with the case of $r$-planes. Consider first the example of $1|1$-planes. Recall that homogeneous coordinates on $\pfinoo$ are 
\begin{equation}\label{eq.coorfin11}
    \underbrace{u^{a|\hmu}\ \text{(even)}\,, u^{\hnu|\hmu}\ \text{(odd)}}_{\text{weight $+1$}}\,,
    \underbrace{u^{*a|\hmu}\ \text{(even)}\,, u^{*a|b}\ \text{(odd)}}_{\text{weight $-1$}}\,.
\end{equation}

\begin{theorem}[``super Pl\"{u}cker relations for  $1|1$-planes in   $n|m$-space'']\label{thm.splueck11}
The super Pl\"{u}cker relations for the variables~\eqref{eq.coorfin11} are:
\begin{align}\label{eq.sp0}
    u^{a|\hmu}u^{*a|\hmu}&=1\,,\\
    u^{a|\hmu}u^{b|\hnu} &=\Ber\begin{pmatrix}
                                 u^{b|\hmu} & \vline & u^{\hnu|\hmu} \\
                                 \hline \vphantom{\int_a^b}
                                 u^{*a|b} & \vline & u^{*a|\hnu} \\
                               \end{pmatrix}\,,
                               \label{eq.sp1}\\
    u^{a|\hmu}u^{\hla|\hnu} &=\Ber\begin{pmatrix}
                                 u^{\hla|\hmu} & \vline & u^{\hnu|\hmu} \\
                                 \hline \vphantom{\int_a^b}
                                 u^{*a|\hla} & \vline & u^{*a|\hnu} \\
                               \end{pmatrix}\,,
                               \label{eq.sp2}\\
                               \label{eq.sp3}
    u^{*a|\hmu}u^{*b|c} &=\Ber^*\begin{pmatrix}
                                 u^{b|\hmu} & \vline & u^{c|\hmu} \\
                                 \hline \vphantom{\int_a^b}
                                 u^{*a|b} & \vline & u^{*a|c} \\
                               \end{pmatrix}\,.
\end{align}
\end{theorem}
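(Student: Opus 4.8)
The plan is to apply the general strategy of subsection~\ref{subsec.genrelat}: the relations cutting out $\Im\Pl$ are exactly the conditions forcing the composition $\Pl\circ\Pl^{-1}_{(\aund)}$ to be the identity on $\pfinoo$. Concretely, I would fix a chart $\aund=a|\hmu$ and work in the open domain where both $u^{a|\hmu}$ and $u^{*a|\hmu}$ are invertible. There the local inverse $\Pl^{-1}_{(\aund)}$ sends a point with essential coordinates $u^{b|\hnu},u^{\hla|\hnu},u^{*b|\hnu},u^{*b|c}$ to the $1|1\times n|m$ matrix $U'$ whose entries are read off from~\eqref{eq.wjb}--\eqref{eq.whbhnu}, namely $w_1{}^b=u^{b|\hmu}/u^{a|\hmu}$, $w_1{}^{\hnu}=u^{\hnu|\hmu}/u^{a|\hmu}$, $w_{\hat 1}{}^b=u^{*a|b}/u^{*a|\hmu}$ and $w_{\hat 1}{}^{\hnu}=u^{*a|\hnu}/u^{*a|\hmu}$. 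Since $\Pl^{-1}_{(\aund)}\circ\Pl=\id$ by the super Cramer rule, the set-theoretic Proposition of that subsection guarantees that imposing $\Pl(U')\sim(\text{original coordinates})$ in the weighted projective superspace specifies the image.

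The first step is to pin down the scaling factor $\la$ of the equivalence $(F,G)\sim(\la F,\la^{-1}G)$. I would evaluate the reconstructed coordinate $u'^{a|\hmu}=\Ber U'^{a|\hmu}$ on the chart indices themselves. The two ghost minors appearing there, $u^{\hmu|\hmu}$ and $u^{*a|a}$, vanish because each is a Berezinian of a matrix with two equal columns (as in the sample computations of subsection~\ref{subsec.11in22}), so $U'^{a|\hmu}$ reduces to the identity and $u'^{a|\hmu}=1$. The weight $+1$ normalization $u'^{a|\hmu}=\la\,u^{a|\hmu}$ then forces $\la=(u^{a|\hmu})^{-1}$, while the parallel computation $u'^{*a|\hmu}=\Ber^*U'^{a|\hmu}=1$ with the weight $-1$ normalization forces $\la=u^{*a|\hmu}$. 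The compatibility of these two determinations of $\la$ is precisely relation~\eqref{eq.sp0}, $u^{a|\hmu}u^{*a|\hmu}=1$; this reflects the genuinely new feature of the $s\neq 0$ theory, that both weights are present and must agree.

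With $\la$ fixed, the remaining relations follow from the homogeneity of Berezinian under row rescaling. Rescaling the even row of a $1|1\times 1|1$ block by an even scalar $\alpha$ and the odd row by an even scalar $\beta$ multiplies $\Ber$ by $\alpha/\beta$ and $\Ber^*$ by $\beta/\alpha$, with no extra sign, by the multiplicativity of $\Ber$ and $\Ber^*$ together with $\Ber\,\mathrm{diag}(\alpha,\beta)=\alpha/\beta$. Applying this with $\alpha=(u^{a|\hmu})^{-1}$ and $\beta=(u^{*a|\hmu})^{-1}$ to $u'^{b|\hnu}=\Ber U'^{b|\hnu}$, to $u'^{\hla|\hnu}=\Ber U'^{\hla|\hnu}$ and to $u'^{*b|c}=\Ber^*U'^{b|c}$ factors out the denominators, so that the identity requirements $u'^{b|\hnu}=\la u^{b|\hnu}$, $u'^{\hla|\hnu}=\la u^{\hla|\hnu}$ and $u'^{*b|c}=\la^{-1}u^{*b|c}$ become, after substituting $\la$ and clearing by~\eqref{eq.sp0}, exactly~\eqref{eq.sp1}, \eqref{eq.sp2} and~\eqref{eq.sp3}. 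I would close, as in the classical example of subsection~\ref{subsec.genrelat}, by observing that although derived where $u^{a|\hmu}$ is invertible, the cleared relations are chart-independent, and since such charts cover $G_{1|1}(V)$ they specify $\Im\Pl$ globally.

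I expect the main obstacle to be the careful bookkeeping of the weighted projective equivalence together with the ghost entries: one must track that $\Ber$ and $\Ber^*$ carry reciprocal weights so that a single factor $\la$ governs both families of coordinates at once, and verify that the row-scaling identities for $\Ber$ and $\Ber^*$ hold without sign corrections in the super setting. Ensuring that the consistency condition for $\la$ lands exactly on~\eqref{eq.sp0}, rather than on some rescaled or signed variant, is the conceptual crux on which the whole system hinges.
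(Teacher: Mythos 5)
Your proposal follows essentially the same route as the paper's own proof: fix the chart $(a|\hmu)$, impose $\Pl\circ\Pl^{-1}_{(a|\hmu)}=\id$ with a single weight factor $\la$, determine $\la=(u^{a|\hmu})^{-1}=u^{*a|\hmu}$ by specializing to the chart indices (the vanishing of the ghost minors $u^{\hmu|\hmu}$ and $u^{*a|a}$ giving exactly the compatibility relation~\eqref{eq.sp0}), and then clear denominators via the row-homogeneity of $\Ber$ and $\Ber^*$ to land on~\eqref{eq.sp1}--\eqref{eq.sp3}. The only cosmetic difference is that the paper also writes out the fourth family of conditions, $u'{}^{*b|\hnu}=\la^{-1}u^{*b|\hnu}$, and observes it is equivalent to the first, a point your argument implicitly subsumes.
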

Relations~\eqref{eq.sp0}, \eqref{eq.sp1} are even, and relations~\eqref{eq.sp2}, \eqref{eq.sp3} are odd.
\begin{proof}
We have the matrix of homogeneous coordinates on the super Grassmannian $G_{1|1}(V)$
\begin{equation*}
    U=\begin{pmatrix}
        u_1{}^a & \vline & u_1{}^{\hmu} \\
                                 \hline \vphantom{\int_a^b}
        u_{\hat 1}{}^a & \vline & u_{\hat 1}{}^{\hmu}
      \end{pmatrix}=
      \begin{pmatrix}
        u_1{}^1 & \ldots &u_1{}^n & \vline & u_1{}^{\hat 1} & \ldots & u_1{}^{\hat m} \\
                                 \hline \vphantom{\int_a^b}
        u_{\hat 1}{}^1 & \ldots &u_{\hat 1}{}^n & \vline & u_{\hat 1}{}^{\hat 1} & \ldots & u_{\hat 1}{}^{\hat m}
      \end{pmatrix}\,,
\end{equation*}
where $a=1,\ldots,n$ and $\mu=1,\ldots,m$, and the essential super Pl\"{u}cker coordinates $u^{a|\hmu}$, $u^{\hnu|\hmu}$, $u^{*a|\hmu}$, and $u^{*a|b}$. The direct super Pl\"{u}cker map $\Pl$ sends $U$ to the collection
\begin{align*}
    u^{a|\hmu}&=\Ber\begin{pmatrix}
        u_1{}^a & \vline & u_1{}^{\hmu} \\
                                 \hline \vphantom{\int_a^b}
        u_{\hat 1}{}^a & \vline & u_{\hat 1}{}^{\hmu}
      \end{pmatrix}\,, \qquad
      u^{\hla|\hmu}=\Ber\begin{pmatrix}
        u_1{}^{\hla} & \vline & u_1{}^{\hmu} \\
                                 \hline \vphantom{\int_a^b}
        u_{\hat 1}{}^{\hla} & \vline & u_{\hat 1}{}^{\hmu}
      \end{pmatrix}\,,\\
      u^{*a|\hmu}&=\Ber^*\begin{pmatrix}
        u_1{}^a & \vline & u_1{}^{\hmu} \\
                                 \hline \vphantom{\int_a^b}
        u_{\hat 1}{}^a & \vline & u_{\hat 1}{}^{\hmu}
      \end{pmatrix}\,, \qquad
      u^{*a|b}=\Ber^*\begin{pmatrix}
        u_1{}^a & \vline & u_1{}^{b} \\
                                 \hline \vphantom{\int_a^b}
        u_{\hat 1}{}^a & \vline & u_{\hat 1}{}^{b}
      \end{pmatrix}\,.
\end{align*}
Also, in the open domain where the variables $u^{a|\hmu}$ and $u^{*a|\hmu}$ are invertible (for arbitrary fixed $a|\hmu$) we have a locally defined inverse super Pl\"{u}cker map $\Pl_{(a|\hmu)}^{-1}$, which sends the collection of essential super Pl\"{u}cker coordinates to a matrix $U'$ whose entries are given by the formulas
\begin{align*}
    u'_1{}^b&=\frac{u^{b|\hmu}}{u^{a|\hmu}}\,, \qquad u'_1{}^{\hnu}=\frac{u^{{\hnu}|\hmu}}{u^{a|\hmu}}\,,
    \\
    u'_{\hat 1}{}^b&=\frac{u^{*a|b}}{u^{*a|\hmu}}\,,
    \qquad u'_{\hat 1}{}^{\hnu}=\frac{u^{*a|\hnu}}{u^{*a|\hmu}}\,.
\end{align*}
Following the same pattern as above, we observe that $\Pl_{(a|\hmu)}^{-1}\circ \Pl=\id$ identically (on the equivalence classes of matrices), but the condition  that $\Pl\circ\Pl_{(a|\hmu)}^{-1}  =\id$ gives the relations that we are looking for. We arrive at the relations (for all $b,c,\hnu,\hla$):
\begin{align*}
     \Ber\begin{pmatrix}
        {u^{b|\hmu}}/{u^{a|\hmu}} & \vline & {u^{{\hnu}|\hmu}}/{u^{a|\hmu}} \\
                                 \hline \vphantom{\int_a^b}
        {u^{*a|b}}/{u^{*a|\hmu}} & \vline & {u^{*a|\hnu}}/{u^{*a|\hmu}}
      \end{pmatrix}&=\la\, u^{b|\hnu}\,, \quad
      \Ber\begin{pmatrix}
         u^{\hla|\hmu}/u^{a|\hmu} & \vline &  u^{\hnu|\hmu}/u^{a|\hmu} \\
                                 \hline \vphantom{\int_a^b}
        u^{*a|\hla}/u^{*a|\hmu} & \vline & u^{*a|\hnu}/u^{*a|\hmu}
      \end{pmatrix}=\la\, u^{\hla|\hnu}\,,\\
      \Ber^*\begin{pmatrix}
        {u^{b|\hmu}}/{u^{a|\hmu}} & \vline & {u^{{\hnu}|\hmu}}/{u^{a|\hmu}} \\
                                 \hline \vphantom{\int_a^b}
        {u^{*a|b}}/{u^{*a|\hmu}} & \vline & {u^{*a|\hnu}}/{u^{*a|\hmu}}
      \end{pmatrix}&=\la^{-1}\,u^{*b|\hnu}\,, \quad
      \Ber^*\begin{pmatrix}
        {u^{b|\hmu}}/{u^{a|\hmu}} & \vline & {u^{c|\hmu}}/{u^{a|\hmu}} \\
                                 \hline \vphantom{\int_a^b}
        {u^{*a|b}}/{u^{*a|\hmu}} & \vline & {u^{*a|c}}/{u^{*a|\hmu}}
      \end{pmatrix}=\la^{-1}\,u^{*b|c}\,,
\end{align*}
where $\la$ is a factor to be determined (not to be confused with the index $\hla$). If we substitute $a|\hnu$ as $b|\hnu$, we obtain that
\begin{equation*}
    \la=(u^{a|\hmu})^{-1}= u^{*a|\hmu}\,.
\end{equation*}
Hence, by substituting   back into the relations and using the properties of $\Ber$ and $\Ber^*$,  we obtain
\begin{align*}
    ({u^{a|\hmu}})^{-1}{u^{*a|\hmu}} \Ber\begin{pmatrix}
        {u^{b|\hmu}}  & \vline & {u^{{\hnu}|\hmu}} \\
                                 \hline \vphantom{\int_a^b}
        {u^{*a|b}}  & \vline & {u^{*a|\hnu}}
      \end{pmatrix}
      &=(u^{a|\hmu})^{-1}\, u^{b|\hnu}\,, \\
      ({u^{a|\hmu}})^{-1}{u^{*a|\hmu}}\Ber\begin{pmatrix}
         u^{\hla|\hmu}  & \vline &  u^{\hnu|\hmu}  \\
                                 \hline \vphantom{\int_a^b}
        u^{*a|\hla}  & \vline & u^{*a|\hnu}
      \end{pmatrix}
      &=(u^{a|\hmu})^{-1}\, u^{\hla|\hnu}\,,\\
      {u^{a|\hmu}}({u^{*a|\hmu}})^{-1}\Ber^*\begin{pmatrix}
        {u^{b|\hmu}}  & \vline & {u^{{\hnu}|\hmu}}  \\
                                 \hline \vphantom{\int_a^b}
        {u^{*a|b}}  & \vline & {u^{*a|\hnu}}
      \end{pmatrix}
      &=u^{a|\hmu}\,u^{*b|\hnu}\,, \\
      {u^{a|\hmu}}({u^{*a|\hmu}})^{-1}\Ber^*\begin{pmatrix}
        {u^{b|\hmu}}  & \vline & {u^{c|\hmu}}  \\
                                 \hline \vphantom{\int_a^b}
        {u^{*a|b}}  & \vline & {u^{*a|c}}
      \end{pmatrix}
      &=u^{a|\hmu}\,u^{*b|c}\,.
\end{align*}
Note that the first and the third equations are equivalent. Together with the condition $(u^{a|\hmu})^{-1}= u^{*a|\hmu}$ that we obtained, this gives~\eqref{eq.sp0}--\eqref{eq.sp3}.
\end{proof}

Note that relations~\eqref{eq.sp0}--\eqref{eq.sp3}  are homogeneous in the sense of weight. Getting rid of the denominators gives the following form of the relations.
\begin{corollary}[alternative form of super Pl\"{u}cker relations for  $1|1$-planes]
Without denominators, the relations between the   super Pl\"{u}cker coordinates are as follows:
\begin{align}\label{eq.altsp1}
    u^{a|\hmu}u^{b|\hnu} &=u^{a|\hnu}u^{b|\hmu} + u^{\hmu|\hnu}u^{*a|b}(u^{a|\hmu})^2\,,\\
    \label{eq.altsp2}
    u^{a|\hnu}u^{\hla|\hmu} &=u^{a|\hmu}u^{\hla|\hnu}+ u^{\hnu|\hmu}u^{*a|\hla}(u^{a|\hnu})^2\,,\\
    u^{a|\hmu}u^{*a|c}u^{b|\hmu} &=u^{a|\hmu}u^{*a|b}u^{c|\hmu}+ u^{*b|c}(u^{b|\hmu})^2\,. \label{eq.altsp3}
\end{align}
\end{corollary}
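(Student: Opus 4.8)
The plan is to deduce the polynomial identities \eqref{eq.altsp1}--\eqref{eq.altsp3} from the Berezinian-form relations \eqref{eq.sp0}--\eqref{eq.sp3} of Theorem~\ref{thm.splueck11}, purely by expanding the $1|1\times 1|1$ (inverse) Berezinians and then clearing the resulting denominators with the help of the normalization \eqref{eq.sp0}. For a $1|1\times 1|1$ even supermatrix I will use the explicit formulas $\Ber\left(\begin{smallmatrix}A&B\\C&D\end{smallmatrix}\right)=(A-BD^{-1}C)D^{-1}$ and $\Ber^*\left(\begin{smallmatrix}A&B\\C&D\end{smallmatrix}\right)=(D-CA^{-1}B)A^{-1}$, the key point being that the cross term $BD^{-1}C$ (resp. $CA^{-1}B$) is a product of two odd entries with an even one between them, hence even, so that the even factor $D^{-1}$ (resp. $A^{-1}$) commutes freely and can be pulled to one side.

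First I would treat the even relation \eqref{eq.altsp1} as the model case. Expanding the right-hand side of \eqref{eq.sp1} gives $\Ber M = u^{b|\hmu}(u^{*a|\hnu})^{-1} - (u^{*a|\hnu})^{-2}\,u^{\hnu|\hmu}u^{*a|b}$, where I have used that $u^{\hnu|\hmu}u^{*a|b}$ is even. Now the normalization \eqref{eq.sp0}, applied at the index $\hnu$, reads $u^{a|\hnu}u^{*a|\hnu}=1$, so $(u^{*a|\hnu})^{-1}=u^{a|\hnu}$; substituting and reordering the (even) factors yields the denominator-free identity $u^{a|\hmu}u^{b|\hnu}=u^{a|\hnu}u^{b|\hmu}-(u^{a|\hnu})^2\,u^{\hnu|\hmu}u^{*a|b}$. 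This already has the desired shape, but with the square $(u^{a|\hnu})^2$ and the coordinate $u^{\hnu|\hmu}$ in place of $(u^{a|\hmu})^2$ and $u^{\hmu|\hnu}$.

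To bring it to the canonical form asserted in \eqref{eq.altsp1}, I would invoke the specialization of \eqref{eq.sp2} at $\hla=\hmu$: since the ghost coordinate $u^{\hmu|\hmu}$ vanishes (repeated odd index), the top-left entry of the Berezinian drops out and \eqref{eq.sp2} collapses, after using \eqref{eq.sp0} at $\hmu$, to the quadratic identity $(u^{a|\hmu})^2\,u^{\hmu|\hnu}=-(u^{a|\hnu})^2\,u^{\hnu|\hmu}$. Feeding this into the previous relation converts $-(u^{a|\hnu})^2u^{\hnu|\hmu}u^{*a|b}$ into $+(u^{a|\hmu})^2u^{\hmu|\hnu}u^{*a|b}$, giving exactly \eqref{eq.altsp1}. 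The odd relation \eqref{eq.altsp2} and the $\Ber^*$ relation \eqref{eq.altsp3} are obtained by the same three-step recipe---expand, eliminate the inverse via \eqref{eq.sp0}, then normalize the surviving square and ghost index using an appropriate vanishing-diagonal specialization of \eqref{eq.sp2} or \eqref{eq.sp3}.

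I expect the main obstacle to be precisely the sign and index bookkeeping in this last normalization step: the expansion naturally produces whichever square $(u^{a|\hnu})^2$ sits in the denominator being cleared, together with the wrong-position ghost coordinate, and it is only after applying the vanishing-diagonal relations that everything can be pushed onto the single normalizing index $\hmu$ chosen in the inverse map $\Pl^{-1}_{(a|\hmu)}$. Keeping track of the parities when commuting the even factors $u^{a|\hmu}$, $u^{a|\hnu}$ past the odd coordinates $u^{\hmu|\hnu}$, $u^{*a|b}$, and checking that the two a priori different denominator-free presentations coincide, is the only genuinely delicate part; the rest is routine substitution.
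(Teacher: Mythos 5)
Your proposal is correct, and its core strategy---expand the $1|1\times 1|1$ Berezinians and inverse Berezinians in \eqref{eq.sp1}--\eqref{eq.sp3} and clear denominators with the normalization \eqref{eq.sp0}---is exactly what the paper's (unwritten, ``getting rid of the denominators'') proof of the corollary amounts to. The one place you diverge is \eqref{eq.altsp1}: the paper's form drops out of \eqref{eq.sp1} immediately after the trivial relabeling $\hmu\leftrightarrow\hnu$ (expand $u^{a|\hnu}u^{b|\hmu}=\Ber(\cdots)$ via $\Ber=(A-BD^{-1}C)D^{-1}$, use $(u^{*a|\hmu})^{-1}=u^{a|\hmu}$, rearrange), so your detour through the intermediate presentation $u^{a|\hmu}u^{b|\hnu}=u^{a|\hnu}u^{b|\hmu}-u^{\hnu|\hmu}u^{*a|b}(u^{a|\hnu})^2$ and the $\hla=\hmu$ specialization of \eqref{eq.sp2} is avoidable; it is nevertheless valid---the specialization, with $u^{\hmu|\hmu}=0$ and \eqref{eq.sp0} applied at both $\hmu$ and $\hnu$, does yield $(u^{a|\hmu})^2u^{\hmu|\hnu}=-(u^{a|\hnu})^2u^{\hnu|\hmu}$---and it buys something the direct route does not, namely an explicit proof that the two a priori different denominator-free presentations of the even relation coincide on the image. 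One small correction: your ``three-step recipe'' overstates what is needed for \eqref{eq.altsp2} and \eqref{eq.altsp3}. There the direct expansion of \eqref{eq.sp2} via $\Ber=(A-BD^{-1}C)D^{-1}$, resp.\ of \eqref{eq.sp3} via $\Ber^*=(D-CA^{-1}B)A^{-1}$, followed by \eqref{eq.sp0} (and, for \eqref{eq.altsp3}, multiplication through by $u^{a|\hmu}(u^{b|\hmu})^2$), lands exactly on the stated forms; no vanishing-diagonal normalization is required, and had you applied one you would only have produced an equivalent variant. Your parity bookkeeping is sound throughout: the cross terms $BD^{-1}C$ and $CA^{-1}B$ are products of two odd entries with even ones interspersed, hence even, so all even factors commute freely and the reorderings you perform are legitimate.
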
\qed

It is interesting to compare relations~\eqref{eq.altsp1}--\eqref{eq.altsp3} with other  super Pl\"{u}cker relations that we have obtained before. In the relations that we obtained previously, such as~\eqref{eq.plueck.even},\eqref{eq.plueck.odd} and \eqref{eq.esssplueckev}, \eqref{eq.esssplueckod}, there is no mixing of odd variables with the even ones, and the odd variables enter  only linearly (suggesting   interpretation in terms of vector bundles). A particular new feature of relations~\eqref{eq.altsp1}--\eqref{eq.altsp3} is the emergence of  the product of odd variables  in an additive term  in the  even relation~\eqref{eq.altsp1}, and the  non-linearity of the odd relations~\eqref{eq.altsp2},\eqref{eq.altsp3}.

Now, for the case of $r|s$-planes, we will give only the relations in the Berezinian form. (Possibly they can be simplified as is the case for $r|0$, but we do not possess this simplification at present.) We will use notations such as $\hmund=\hmu_1\ldots\hmu_s$ and $\aund=a_1\ldots a_s$ for brevity.

\begin{theorem}[``super Pl\"{u}cker relations for  $r|s$-planes in   $n|m$-space'']
\label{thm.splueckrs}
The super Pl\"{u}cker relations for the variables~\eqref{eq.coorfinrs}, the homogeneous coordinates on   $\pfinth$, are:
\begin{align}
\label{eq.relrs0}
&u^{\aund|\hmund}\,u^{*\aund|\hmund}=1
\\
\label{eq.relrs1}
    &\Ber\left(\begin{array}{ccc}
        u^{a_1\ldots a_{i-1}b_ja_{i+1}\ldots a_r|\hmund} & \vline & u^{a_1\ldots a_{i-1}\hnu_{\be}a_{i+1}\ldots a_r|\hmund} \\
                                 \hline \vphantom{\int_A^B}
         u^{*\aund|\hmu_1\ldots\hmu_{\al-1}b_j\hmu_{\al+1}\ldots \hmu_s} & \vline & u^{*\aund|\hmu_1\ldots\hmu_{\al-1}\hnu_{\be}\hmu_{\al+1}\ldots \hmu_s} \vphantom{\int_A^B}\\
        \end{array}\right)_{{i,j=1\ldots r,}\atop{\al,\be=1\ldots s}}
         =(u^{\aund|\hmund})^{r+s-1}u^{\bund|\hnund}\,,
         \\
         \label{eq.relrs2}
        &\Ber\left(\begin{array}{c:ccc}
        u^{a_1\ldots a_{i-1}b_ja_{i+1}\ldots a_r|\hmund}
        & u^{a_1\ldots a_{i-1}\hla a_{i+1}\ldots a_r|\hmund}
        & \vline
        & u^{a_1\ldots a_{i-1}\hnu_{\be}a_{i+1}\ldots a_r|\hmund} \\
                                 \hline \vphantom{\int_A^B}
         u^{*\aund|\hmu_1\ldots\hmu_{\al-1}b_j\hmu_{\al+1}\ldots \hmu_s}
         & u^{*\aund|\hmu_1\ldots\hmu_{\al-1}\hla\hmu_{\al+1}\ldots \hmu_s}
         &\vline
         & u^{*\aund|\hmu_1\ldots\hmu_{\al-1}\hnu_{\be}\hmu_{\al+1}\ldots \hmu_s} \vphantom{\int_A^B}\\
        \end{array}\right)_{{{i=1\ldots r,}\atop{j=1\ldots r-1,}}\atop{\al,\be=1\ldots s}}
        \notag\\
        &  \hspace{290pt} =(u^{\aund|\hmund})^{r+s-1}u^{b_1\ldots b_{r-1}\hla|\hnund}\,,
        \\
\label{eq.relrs3}
    &\Ber^*\left(\begin{array}{ccc:c}
        u^{a_1\ldots a_{i-1}b_ja_{i+1}\ldots a_r|\hmund} & \vline
        & u^{a_1\ldots a_{i-1} c a_{i+1}\ldots a_r|\hmund}
        & u^{a_1\ldots a_{i-1}\hnu_{\be}a_{i+1}\ldots a_r|\hmund} \\
                                 \hline \vphantom{\int_A^B}
         u^{*\aund|\hmu_1\ldots\hmu_{\al-1}b_j\hmu_{\al+1}\ldots \hmu_s} & \vline
         & u^{*\aund|\hmu_1\ldots\hmu_{\al-1} c \hmu_{\al+1}\ldots \hmu_s}
         & u^{*\aund|\hmu_1\ldots\hmu_{\al-1}\hnu_{\be}\hmu_{\al+1}\ldots \hmu_s} \vphantom{\int_A^B}
         \\
        \end{array}\right)_{{i,j=1\ldots r,}\atop{{\al=1\ldots s}\atop{\be=1\ldots s-1}}}
         \notag\\
        &  \hspace{260pt}=(u^{\aund|\hmund})^{-r-s+1}u^{*b_1\ldots b_r|c\hnu_1\ldots \hnu_{s-1}}\,.
\end{align}
\end{theorem}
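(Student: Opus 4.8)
The plan is to reproduce, in the general $r|s$ setting, the exact strategy already used for Theorem~\ref{thm.essplueckr0} and Theorem~\ref{thm.splueck11}: apply the elementary Proposition on maps $f\co A\to B$, $g\co B\to A$ with $g\circ f=\id_A$, whereby $\Im f$ is cut out by the equation $f\circ g=\id$. Here I would take $f=\Pl\co G_{r|s}(V)\to \pfin$, given by~\eqref{eq.ua1mus}--\eqref{eq.usta1bmus}, and $g=\Pl^{-1}_{(\aund)}\co \pfin\to G_{r|s}(V)$ the locally defined inverse~\eqref{eq.wjb}--\eqref{eq.whbhnu} on the domain where $u^{\aund|\hmund}$ and $u^{*\aund|\hmund}$ are invertible. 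By the super Cramer rule recalled in subsection~\ref{subsec.proofemb} one has $\Pl^{-1}_{(\aund)}\circ\Pl=\id$ (in matrix language $U\mapsto (U^{\aund})^{-1}U\sim U$), so the relations will arise as the single requirement $\Pl\circ\Pl^{-1}_{(\aund)}=\id$, imposed on the four kinds of essential coordinate in~\eqref{eq.coorfinrs} (even $\Ber$, even $\Ber^*$, and the two ghost-column versions). As in the $1|1$ computation, the two even conditions will turn out equivalent once the reciprocity~\eqref{eq.relrs0} is in force, so the independent output is precisely~\eqref{eq.relrs0}--\eqref{eq.relrs3}, for every chart index $\aund|\hmund$ (ranging over charts then gives all index combinations).

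The core computation is to evaluate $\Pl$ on the matrix $W=\Pl^{-1}_{(\aund)}(\ldots)$. Selecting columns $\bund|\hnund$ yields a square $(r|s)\times(r|s)$ submatrix $W^{\bund|\hnund}$, and inspecting~\eqref{eq.wjb}--\eqref{eq.whbhnu} shows that every entry in an even row carries the denominator $u^{\aund|\hmund}$ and every entry in an odd row carries $u^{*\aund|\hmund}$. Hence $W^{\bund|\hnund}=D\,\tilde M$, where $D=\operatorname{diag}\bigl((u^{\aund|\hmund})^{-1}I_r,\,(u^{*\aund|\hmund})^{-1}I_s\bigr)$ is an even diagonal supermatrix and $\tilde M$ is exactly the numerator matrix inside $\Ber$ in~\eqref{eq.relrs1} (the parities match: the even-even and odd-odd blocks of $\tilde M$ are even, the even-odd and odd-even blocks are the ghost minors and hence odd, so $\tilde M$ is a genuine even supermatrix). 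The factor $\la$ is pinned down by substituting the chart index $\bund|\hnund=\aund|\hmund$: then $W^{\aund|\hmund}=I$, the even $\Ber$ relation gives $\la=(u^{\aund|\hmund})^{-1}$ and the even $\Ber^*$ relation gives $\la=u^{*\aund|\hmund}$; equating these yields $u^{\aund|\hmund}u^{*\aund|\hmund}=1$, which is~\eqref{eq.relrs0}.

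Next I would extract the powers of $u^{\aund|\hmund}$ by factoring $D$ out. Using the multiplicativity $\Ber(D\,\tilde M)=\Ber D\cdot\Ber\tilde M$ and then~\eqref{eq.relrs0} gives $\Ber W^{\bund|\hnund}=(u^{\aund|\hmund})^{-r}(u^{*\aund|\hmund})^{s}\Ber\tilde M=(u^{\aund|\hmund})^{-r-s}\Ber\tilde M$. Combining with $\Ber W^{\bund|\hnund}=\la\,u^{\bund|\hnund}=(u^{\aund|\hmund})^{-1}u^{\bund|\hnund}$ produces the exponent $r+s-1$ of~\eqref{eq.relrs1}. The odd relation~\eqref{eq.relrs2} comes out verbatim, now selecting the ghost column $\hla$ in an even slot, and~\eqref{eq.relrs3} by repeating the argument with $\Ber^*$ in place of $\Ber$: since $\Ber^*=\Ber^{-1}$ is also multiplicative on even matrices, the even/odd row-scaling exponents are interchanged, so $\Ber^*(D\,\tilde M)=(u^{\aund|\hmund})^{r+s}\Ber^*\tilde M$ and $\la^{-1}=u^{\aund|\hmund}$, giving the exponent $-r-s+1$.

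The main obstacle I anticipate is combinatorial rather than conceptual: keeping straight, across all four cases, which index occupies which even or odd row and column slot, so that each entry of $\tilde M$ matches the correct ordinary minor, ghost minor, $\Ber^*$-minor, or ghost $\Ber^*$-minor in~\eqref{eq.relrs1}--\eqref{eq.relrs3}, and checking that the denominators really do organize by rows into the single factor $D$. The one genuinely delicate point is justifying the row-factoring of $D$ for the \emph{extended} (ghost-column) Berezinians in~\eqref{eq.relrs2} and~\eqref{eq.relrs3}, where $\tilde M$ is a ``wrong'' matrix and the plain product rule $\Ber(AB)=\Ber A\,\Ber B$ need not apply; I would settle this by a direct scaling computation from the Schur-complement formulas~\eqref{eq:ber} and~\eqref{eq.invber}, observing that multiplying a single even (resp.\ odd) row by a scalar multiplies $\Ber$ by that scalar (resp.\ its inverse), and $\Ber^*$ the other way around, regardless of the parities of the columns, as long as the relevant diagonal block stays invertible.
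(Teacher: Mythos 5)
Your proposal is correct and follows essentially the same route as the paper's own proof: the paper likewise applies the criterion $\Pl\circ\Pl^{-1}_{(\aund|\hmund)}=\id$ to cut out the image, pins down $\la=(u^{\aund|\hmund})^{-1}=u^{*\aund|\hmund}$ by substituting $\bund|\hnund=\aund|\hmund$ (giving~\eqref{eq.relrs0}), and then clears denominators via the row-scaling properties of $\Ber$ and $\Ber^*$ to reach~\eqref{eq.relrs1}--\eqref{eq.relrs3}. Your explicit diagonal factorization $W^{\bund|\hnund}=D\,\tilde{M}$, the exponent bookkeeping, and the direct Schur-complement scaling check for the ghost-column (extended) Berezinians simply spell out the simplification step that the paper compresses into ``similarly with what we did in the proofs of Theorems~\ref{thm.essplueckr0} and~\ref{thm.splueck11}.''
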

(In the Berezinians in~\eqref{eq.relrs1}--\eqref{eq.relrs3}, $i$,$\al$ label   rows, while $j$,$\be$ label  columns; 
there is one column of ``wrong'' parity   in an even position~\eqref{eq.relrs2} and in an odd position in \eqref{eq.relrs3}.)
\begin{proof}
Very much along the same lines as for Theorem~\ref{thm.essplueckr0} and Theorem~\ref{thm.splueck11}. We have the ``direct'' super Pl\"{u}cker map $\Pl$ that sends the $r|s\times n|m$ matrix $U$ of homogeneous coordinates on the super Grassmannian $G_{r|s}(V)$ to the homogeneous coordinates~\eqref{eq.coorfinrs} on $\pfin$ according to formulas~\eqref{eq.ua1mus}--\eqref{eq.usta1bmus}. And in the domain where $u^{\aund|\hmund}$ and $u^{*\aund|\hmund}$ (for some fixed $\aund|\hmund$) are invertible, there is the ``inverse'' super Pl\"{u}cker map $\Pl^{-1}_{(\aund|\hmund)}$ that sends the variables $u^{\bund|\hnund}$ etc. to a matrix $U'$ according to formulas~\eqref{eq.wjb}--\eqref{eq.whbhnu} (with $u'_j{}^b$ written instead of $w_j{}^b$, etc.). For the composition in one order, we have $\Pl_{(\aund|\hmund)}^{-1}\circ \Pl=\id$ automatically, while for the composition in the opposite order, the condition that $\Pl\circ \Pl_{(\aund|\hmund)}^{-1}=\id$ specifies the image of $\Pl$ and gives the relations we are looking for. Exactly as above, this amounts to the  equations of the form
\begin{align*}
     u'{}^{b_1\ldots b_r|\hnu_1\ldots \hnu_s}&=\la\, u^{b_1\ldots b_r|\hnu_1\ldots \hnu_s}\,, \qquad\ \ \
      u'{}^{b_1\ldots b_{r-1}\hla|\hnu_1\ldots \hnu_s} =\la\, u^{b_1\ldots b_{r-1}\hla|\hnu_1\ldots \hnu_s}\,,\\
      u'{}^{*b_1\ldots b_{r}|\hnu_1\ldots \hnu_s}&=\la^{-1}\,u^{*b_1\ldots b_{r}|\hnu_1\ldots \hnu_s}\,, \qquad
      u'{}^{*b_1\ldots b_{r}|c\hnu_1\ldots \hnu_{s-1}} =\la^{-1}\,u^{*b_1\ldots b_{r}|c\hnu_1\ldots \hnu_{s-1}}\,,
\end{align*}
for all combinations of indices,  where $\la$ (not to be confused with the index $\hla$) is a factor to be determined. Here the variables with the prime at the left-hand side are obtained by the corresponding Berezinian formulas from $u'_j{}^b$, etc. We deduce, by using these relations in the particular case when $\bund|\hnund=\aund|\hmund$, that $\la=(u^{\aund|\hmund})^{-1}=u^{*\aund|\hmund}$, so in particular $u^{*\aund|\hmund}=(u^{\aund|\hmund})^{-1}$. By substituting that into the relations and simplifying, similarly with what we did in the proofs of Theorems~\ref{thm.essplueckr0} and~\ref{thm.splueck11}, we arrive at the relations in the form~\eqref{eq.relrs1}--\eqref{eq.relrs3}, together with \eqref{eq.relrs0}, as claimed.
\end{proof}

\subsection{``Khudaverdian's relations''} \label{subsec.khud}

There is an interesting alternative approach to   relations   satisfied by a simple multivector   suggested by H.~Khudaverdian  (private communication). The main idea  is essentially based on supergeometry and the   construction of exterior powers $\L^{r|s}(V)$.  As we shall show, it works non-trivially   already in the classical case giving relations in an unexpected form.

Consider first the general setup of $r|s$-planes in an $n|m$-space $V$. Recall once again that if $r|s$ independent vectors $\up_i$ span $L\subset V$, the same vectors (or their parity-reversed copies, to be more precise) span the $s|r$-plane $\Pi L$ in $\Pi V$. Consider the ``non-linear wedge product'' $F:=[\up_1\ldots,\up_r \,|\, \up_{\hat 1},\ldots \up_{\hat s}]$, which is an element of $\L^{r|s}(V)$,
\begin{equation}
    F(p^1,\ldots,p^r\,|\, p^{\hat 1}, \ldots, p^{\hat s}) = \Ber\bigl(\langle \up_i,p^j\rangle\bigr)\,,
\end{equation}
where $\langle \up_i,p^j\rangle=  u_i{}^ap_a{}^j$. (The function $F$ is $\pl(L)$  in the notation of subsection~\ref{subsec.proofemb}.) In the same way, these vectors considered as parity-reversed define a multivector $F^*:=[\up_{\hat 1},\ldots \up_{\hat s}\,|\,\up_1\ldots,\up_r]\in \L^{s|r}(\Pi V)$. On the formula level, this is just the Berezinian of the parity-reverse of the same matrix:
\begin{equation}
    F^*(p^{\hat 1}, \ldots, p^{\hat s}\,|\,p^1,\ldots,p^r) =
    \Ber\bigl(\langle \up_i,p^j\rangle\bigr)^{\Pi}=
    \Ber^*\bigl(\langle \up_i,p^j\rangle\bigr)\,.
\end{equation}

So, if we are a bit sloppy about parities of the arguments,
\begin{equation}
    F^*=\frac{1}{F}\,,
\end{equation}
where at the left-hand side $F^*$ is an element of $\L^{s|r}(\Pi V)$ and at the right-hand side $F$ is an element of $\L^{r|s}(V)$. (We have   discussed $F^*$ under the name ``$\Pi$-dual Pl\"{u}cker transform'' in subsections~\ref{subsec.11in22} and \ref{subsec.proofemb} and denoted it $\pl^*(U)$ or $\pl^*(L)$.)

Hence we have arrived at the following  statement:

\begin{theorem}[Khudaverdian, unpublished\footnote{We thank him for the permission to use it here.}]\label{thm.hovrel}
If $F\in \L^{r|s}(V)$ is a simple multivector corresponding to an $r|s$-plane, then the function $G=1/F$ regarded as a function of the parity-reversed arguments is also a multivector, of degree $s|r$, for $\Pi V$, i.e. $G$  satisfies fundamental equations~\eqref{eq.fund} with respect to the shifted parities of the indices. \qed
\end{theorem}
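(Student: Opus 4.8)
The plan is to prove the statement \emph{without} differentiating $G=1/F$ directly, and instead to recognize $G$ as a genuine simple multivector for $\Pi V$, whose membership in $\L^{s|r}(\Pi V)$ --- and hence the fundamental equations --- is already supplied by Example~\ref{ex.nonlinwedge}. Since $F$ is simple, it is the non-linear wedge product $F=[\up_1,\ldots,\up_r\,|\,\up_{\hat 1},\ldots,\up_{\hat s}]$, so $F(p)=\Ber M$ with $M=\bigl(\langle\up_i,p^j\rangle\bigr)$ an even $r|s\times r|s$ matrix. The whole argument then hinges on the single identity $(\Ber M)^{-1}=\Ber\bigl(M^{\Pi}\bigr)=\Ber^* M$ recalled in subsection~\ref{subsec.ber}.

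First I would make the identification explicit. The identity above gives
\[
    G=\frac{1}{F}=\Ber^* M=\Ber\bigl(M^{\Pi}\bigr).
\]
Here $M^{\Pi}$ is the matrix of the same pairings $\langle\up_i,p^j\rangle$ with the parities of all rows and columns reversed: reversing the row parities reads the $\up_i$ as vectors $\Pi\up_i\in\Pi V$, and reversing the column parities reads the $p^j$ as covectors for $\Pi V$. Consequently $\Ber(M^{\Pi})$ is exactly the non-linear wedge product $[\up_{\hat 1},\ldots,\up_{\hat s}\,|\,\up_1,\ldots,\up_r]$ attached to the $s|r$-plane $\Pi L\subset\Pi V$, whose even basis vectors are $\Pi\up_{\hat 1},\ldots,\Pi\up_{\hat s}$ and whose odd basis vectors are $\Pi\up_1,\ldots,\Pi\up_r$. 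By Example~\ref{ex.nonlinwedge}, applied now to $\Pi V$ with the degrees $s|r$ in place of $r|s$, this function lies in $\L^{s|r}(\Pi V)$: it satisfies the covariance condition~\eqref{eq.bercond} and the fundamental equations~\eqref{eq.fund} in the shape appropriate to degree $s|r$. This already proves the theorem, modulo translating ``degree $s|r$ on $\Pi V$'' into ``shifted parities'' in the original variables.

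The only place signs genuinely matter is this last translation, which I expect to be the main obstacle. Regarding $G$ as a function of the \emph{same} variables $p_a{}^i$ but carrying the $\Pi V$/degree-$s|r$ parities amounts to replacing each index parity $\itt,\jt,\at$ by its shift $\itt+1,\jt+1,\at+1$ in~\eqref{eq.fund}. I would verify consistency by a one-line computation of the coefficient exponent:
\[
    (\itt+1)(\jt+1)+(\at+1)\bigl((\itt+1)+(\jt+1)\bigr)\equiv \itt\,\jt+\at(\itt+\jt)+1 \pmod 2,
\]
so the sign $(-1)^{\itt\jt+\at(\itt+\jt)}$ in~\eqref{eq.fund} is simply negated. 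Hence the degree-$s|r$ fundamental equations for $G$, re-expressed in the original parities, are precisely~\eqref{eq.fund} with the flipped sign, i.e.\ with the shifted parities --- which is the assertion. The delicate bookkeeping is to be sure that the parity reversal acts correctly on \emph{both} the row indices (the passage $V^*\to(\Pi V)^*$) and the column indices (which of the $r+s$ covector slots are counted even), so that the two reversals conspire into exactly this single overall change of sign.

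Finally I would add a remark on why the identification, and not a direct computation, is the right route. Differentiating $FG=1$ twice gives
\[
    \dder{G}{p_a{}^i}{p_b{}^j}=2F^{-3}\,\der{F}{p_a{}^i}\,\der{F}{p_b{}^j}-F^{-2}\,\dder{F}{p_a{}^i}{p_b{}^j},
\]
and feeding this into the shifted-parity equations does \emph{not} close using~\eqref{eq.fund} for $F$ alone; the residual terms are exactly the quadratic ``Khudaverdian relations'' in the first derivatives of $F$. This is unavoidable, because $1/F$ is \emph{not} a multivector for a non-simple $F$: simplicity is what the Berezinian identity $(\Ber M)^{-1}=\Ber(M^{\Pi})$ encodes, which is why routing the proof through Example~\ref{ex.nonlinwedge} is both the efficient path and the natural setup for extracting those relations.
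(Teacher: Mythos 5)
Your proposal is correct and is essentially the paper's own argument: the theorem carries an immediate \qed{} precisely because the preceding discussion identifies $1/F=\Ber^*\bigl(\langle\up_i,p^j\rangle\bigr)=\Ber\bigl(\langle\up_i,p^j\rangle\bigr)^{\Pi}$ with the non-linear wedge product $[\up_{\hat 1},\ldots,\up_{\hat s}\,|\,\up_1,\ldots,\up_r]\in\L^{s|r}(\Pi V)$ attached to $\Pi L$, which satisfies~\eqref{eq.bercond} and~\eqref{eq.fund} by Example~\ref{ex.nonlinwedge}. Your explicit parity-shift computation $(\itt+1)(\jt+1)+(\at+1)(\itt+\jt)\equiv 1+\itt\,\jt+\at(\itt+\jt)\pmod 2$ is exactly the sign bookkeeping the paper performs in the proof of the ensuing corollary, and your closing remark correctly explains why simplicity (not merely membership in $\L^{r|s}(V)$) is what makes $1/F$ a multivector.
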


This can be effectively  reformulated as follows.
\begin{corollary}
If an even multivector $F\in \L^{r|s}(V)$ is simple, it satisfies an   additional  system of quadric equations:
\begin{equation}\label{eq.hovrel}
    \der{F}{p_a^i}\,\der{F}{p_b^j} - (-1)^{\itt\jt+\at(\itt+\jt)}\,\der{F}{p_a^j}\,\der{F}{p_b^i} -F\,\dder{F}{p_a^i}{p_b^j}=0\,,
\end{equation}
for all $i,j=1,\ldots,r,\hat 1,\ldots,\hat s$ and $a,b=1,\ldots,n,\hat 1,\ldots,\hat m$.
\end{corollary}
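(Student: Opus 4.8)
The plan is to deduce the Corollary directly from Theorem~\ref{thm.hovrel} by unwinding what it means for $G=1/F$ to satisfy the fundamental equations \eqref{eq.fund} with respect to the parities of $\Pi V$ and the degree $s|r$. Once the parity bookkeeping is fixed, the argument is a short differentiation, so I would set up the signs first. Passing from $F\in\L^{r|s}(V)$ to $G\in\L^{s|r}(\Pi V)$ shifts every row parity $\at\mapsto\at+1$ and every column parity $\itt\mapsto\itt+1$ (an even covector slot of $V$ becomes an odd slot of $\Pi V$, and a column in one of the first $r$ positions, even for $F$, becomes odd for $G$). Crucially, the parity of the matrix entry $p_a^i$ itself, namely $\at+\itt$, is unchanged, so $G$ remains an even function and the differentiation conventions stay consistent. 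A mod-$2$ computation then gives
\begin{equation*}
(-1)^{(\itt+1)(\jt+1)+(\at+1)\bigl((\itt+1)+(\jt+1)\bigr)}=-(-1)^{\itt\jt+\at(\itt+\jt)}\,,
\end{equation*}
so Theorem~\ref{thm.hovrel} says precisely that $\dder{G}{p_a^i}{p_b^j}-(-1)^{\itt\jt+\at(\itt+\jt)}\dder{G}{p_a^j}{p_b^i}=0$.

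Next I would compute the derivatives of $G=1/F$. Since $F$ is even, hence central, one has $\der{G}{p_a^i}=-F^{-2}\der{F}{p_a^i}$, and a second application of the Leibniz rule (sign-free, because $F^{-2}$ is even) yields
\begin{equation*}
\dder{G}{p_a^i}{p_b^j}=2F^{-3}\,\der{F}{p_a^i}\,\der{F}{p_b^j}-F^{-2}\,\dder{F}{p_a^i}{p_b^j}\,.
\end{equation*}
Substituting this together with the analogous expression for $\dder{G}{p_a^j}{p_b^i}$ into the shifted fundamental equation, the two second-derivative contributions collapse, using \eqref{eq.fund} for $F$ itself in the form $\dder{F}{p_a^j}{p_b^i}=-(-1)^{\itt\jt+\at(\itt+\jt)}\dder{F}{p_a^i}{p_b^j}$, into the single term $-2F^{-2}\dder{F}{p_a^i}{p_b^j}$, while the first-derivative contributions assemble into $2F^{-3}\bigl(\der{F}{p_a^i}\der{F}{p_b^j}-(-1)^{\itt\jt+\at(\itt+\jt)}\der{F}{p_a^j}\der{F}{p_b^i}\bigr)$. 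Multiplying the resulting identity through by $F^3/2$ clears the denominators and reproduces exactly \eqref{eq.hovrel}, for all the stated index combinations.

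The computation is short, and because $F$ is even all the potential Koszul signs coming from the super-Leibniz rule vanish, which is what keeps it manageable. The single step I would double-check with care is the parity-shift sign identity above, i.e.\ that reversing parities negates the sign factor in \eqref{eq.fund}; this is the only place where the whole argument could go wrong. I expect no other obstacle, and I would sanity-check the final relation against the special case $s=0$ (where every sign is $+$ and \eqref{eq.hovrel} reduces to the classical statement that $1/F$ is again multilinear-type) and against the purely odd block, both of which can be read off directly from the explicit forms recorded earlier.
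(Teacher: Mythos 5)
Your proposal is correct and follows essentially the same route as the paper's own proof: the parity-shift computation $(\itt+1)(\jt+1)+(\at+1)(\itt+\jt)=1+\itt\jt+\at(\itt+\jt)$ giving the sign flip in the fundamental equations for $\Pi V$, the derivative formulas for $G=1/F$, and the use of~\eqref{eq.fund} for $F$ itself to merge the two second-derivative terms before clearing denominators all match the paper step for step. The closing sanity check against $s=0$ is a pleasant extra but not needed.
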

\begin{proof}
Suppose an array $p$ of $r|s$ covectors in an $n|m$-dimensional space $V$ is regarded as an array of $s|r$-covectors in the $m|n$-dimensional space $\Pi V$.
Then by definition  a function $G=G(p)$ which is an element of $\L^{s|r}(\Pi V)$ satisfies the system of fundamental equations~\eqref{eq.fund} with the reversed parities of the indices. It will be:
\begin{equation}\label{eq.pifund}
    \dder{G}{{p_a{}^i}}{{p_b{}^j}}  - (-1)^{\itt\,\jt +\at(\itt+\jt)} \dder{G}{p_a{}^j}{p_b{}^i} = 0
\end{equation}
(the minus sign between the terms instead of the plus sign in~\eqref{eq.fund}). Indeed, for the reversed parity we have $(\itt+1)(\jt +1) +(\at+1)(\itt+\jt)=1+ \itt\,\jt +\at(\itt+\jt)$. For $G=1/F$, we have
\begin{equation*}
    \der{G}{p_b{}^j}=-F^{-2}\,\der{F}{p_b{}^j}\,, \quad
    \dder{G}{{p_a{}^i}}{{p_b{}^j}}=
    2 F^{-3} \der{F}{p_a{}^i} \,\der{F}{p_b{}^j} -F^{-2}\dder{F}{{p_a{}^i}}{{p_b{}^j}}\,,
\end{equation*}
and after substituting in~\eqref{eq.pifund} and ridding of the denominators, we arrive at
\begin{equation}\label{eq.intermed}
    2 \left(\der{F}{p_a{}^i} \,\der{F}{p_b{}^j}
     - (-1)^{\itt\,\jt +\at(\itt+\jt)} \der{F}{p_a{}^j} \,\der{F}{p_b{}^i} \right)
    -F\,\dder{F}{{p_a{}^i}}{{p_b{}^j}}
    + (-1)^{\itt\,\jt +\at(\itt+\jt)} F\,\dder{F}{{p_a{}^j}}{{p_b{}^i}}=0\,.
\end{equation}
Since we assume that $F$ is an $r|s$-vector, it satisfies the original system~\eqref{eq.fund}, which is
\begin{equation*}
    \dder{F}{{p_a{}^i}}{{p_b{}^j}}  + (-1)^{\itt\,\jt +\at(\itt+\jt)} \dder{F}{p_a{}^j}{p_b{}^i} = 0\,
\end{equation*}
and from where we can express the last term in~\eqref{eq.intermed}, obtaining it coinciding with the penultimate term (with the same sign). By dividing by $2$, we get exactly~\eqref{eq.hovrel}.
\end{proof}

We shall refer to equations~\eqref{eq.hovrel} as the  \emph{Khudaverdian relations}.

Since the relations given by~\eqref{eq.hovrel}   are quadric, the same as the classical Pl\"ucker relations, it is very tempting to think that they simply coincide with the Pl\"ucker relations in the classical situation and provide the desired super analog in general. As we shall see, this is not exactly the case in general, though  is  true for $r|s=2|0$.

\begin{theorem}
For an even bivector $T\in \L^2(V)$ in an $n|m$-dimensional space $V$, the Khudaverdian relations~\eqref{eq.hovrel} are equivalent to the  super  Pl\"ucker relations~\eqref{eq.plueckfor2}.
\end{theorem}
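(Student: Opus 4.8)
The plan is to substitute the explicit bilinear form of $F$ into the Khudaverdian relations~\eqref{eq.hovrel} and read off the resulting conditions on the components $T^{ab}$, matching them term by term against~\eqref{eq.plueckfor2}. First I would specialize~\eqref{eq.hovrel} to $r|s=2|0$. Here both column indices $i,j$ run over $\{1,2\}$ and are even, so $\itt=\jt=0$ and the sign $(-1)^{\itt\jt+\at(\itt+\jt)}$ collapses to $+1$; the Khudaverdian relations reduce to
\begin{equation*}
    \der{F}{p_a^i}\,\der{F}{p_b^j}-\der{F}{p_a^j}\,\der{F}{p_b^i}-F\,\dder{F}{p_a^i}{p_b^j}=0
\end{equation*}
for $i,j\in\{1,2\}$ and all $a,b$. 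Since $s=0$, the first Example following~\eqref{eq.fund} shows that $F$ is exactly the bilinear antisymmetric function $F=T^{cd}p_c^1p_d^2$ associated with $T$, hence linear in each covector. Consequently $\dder{F}{p_a^i}{p_b^i}=0$, and in the case $i=j$ the displayed relation becomes $0=0$ identically. Thus only $i\neq j$ carries content, and by the antisymmetry of the left-hand side it suffices to treat $i=1$, $j=2$.

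Next I would compute the derivatives from $F=T^{cd}p_c^1p_d^2$, tracking the signs forced by the left-derivative convention of~\eqref{eq.fund} and by the parity $\at$ of each component $p_a^i$. The first derivatives $\der{F}{p_a^1}$ and $\der{F}{p_b^2}$ are linear in $p^2$ and in $p^1$ respectively, $\der{F}{p_a^2}$ and $\der{F}{p_b^1}$ likewise, while the mixed second derivative $\dder{F}{p_a^1}{p_b^2}$ is (up to sign) the constant $T^{ab}$. Therefore, for each fixed pair $a,b$, the whole left-hand side is a bilinear function of the universal covectors $(p^1,p^2)$: the product $\der{F}{p_a^1}\,\der{F}{p_b^2}$ contributes the monomials $T^{ad}T^{cb}p_c^1p_d^2$, the product $\der{F}{p_a^2}\,\der{F}{p_b^1}$ contributes $T^{ca}T^{bd}p_c^1p_d^2$, and the term $F\,\dder{F}{p_a^1}{p_b^2}$ contributes $T^{cd}T^{ab}p_c^1p_d^2$. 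Since the components of $p^1,p^2$ are free variables, the relation holds if and only if the total coefficient of each monomial $p_c^1p_d^2$ vanishes.

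Finally I would equate the coefficient of $p_c^1p_d^2$ to zero and simplify the second contribution using the antisymmetry $T^{ca}=-(-1)^{\at\ct}T^{ac}$ to rewrite it in terms of $T^{ac}T^{bd}$. The resulting identity for fixed $a,b,c,d$ is precisely~\eqref{eq.plueckfor2}; running over all $a,b$ and all monomials in $c,d$ produces the full system, while the invariance of~\eqref{eq.plueckfor2} under permutation of $a,b,c,d$ guarantees no relation is missed. The converse is the same computation read backwards: given~\eqref{eq.plueckfor2} for all indices, every coefficient vanishes and the Khudaverdian relations follow. I expect the main obstacle to be purely the sign bookkeeping --- correctly accumulating the factors $(-1)^{\at\bt}$ arising from reordering the Grassmann-odd components $p_c^1,p_d^2$ and from the left derivatives, so that the three signed terms assemble into exactly the factors $(-1)^{\bt(\at+\ct+\dt)}$, $(-1)^{\ct(\at+\dt)}$ and $(-1)^{\bt\ct+\at\dt}$ of~\eqref{eq.plueckfor2} rather than some permuted sign pattern.
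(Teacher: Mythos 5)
Your proposal is correct and follows essentially the same route as the paper's proof: specialize \eqref{eq.hovrel} to $r|s=2|0$ (so the sign factor collapses to $+1$), observe the $i=j$ case is vacuous by bilinearity of $F=T^{cd}p^1_cp^2_d$, then for $i=1$, $j=2$ substitute the explicit derivatives and equate the coefficient of each monomial $p^1_cp^2_d$ to zero, using the antisymmetry $T^{ca}=-(-1)^{\at\ct}T^{ac}$ to land on \eqref{eq.plueckfor2}. The one thing you defer, the sign bookkeeping, is exactly what the paper's calculation carries out explicitly, and it does assemble into the factors $(-1)^{\bt(\at+\ct+\dt)}$, $(-1)^{\ct(\at+\dt)}$, $(-1)^{\bt\ct+\at\dt}$ as you predict.
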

\begin{proof} Take $T(p)=T^{cd}p^1_cp^2_d=(-1)^{\ct+\dt}\,p^1_cp^2_d\,T^{cd}$. Here $\widetilde{T^{cd}}=\ct+\dt$ and $T^{cd}=-(-1)^{\ct\dt}T^{dc}$. Note that    the relation for $T$ takes the form
\begin{equation}\label{eq.hovrel20}
    \der{T}{p_a^i}\,\der{T}{p_b^j} - \der{T}{p_a^j}\,\der{T}{p_b^i} -T\,\dder{T}{p_a^i}{p_b^j}=0\,,
\end{equation}
where the indices $i,j$  run over $1,2$. If $i=j$, then the relation is empty since the first two terms cancel and the last term vanishes because $T$ is bilinear. Hence the possibly nontrivial case is $i\neq j$ and we can take $i=1$, $j=2$. We have
$\lder{T}{p^1_a}=(-1)^{\at+\dt}p^2_dT^{ad}$, $\lder{T}{p^1_b}=(-1)^{\bt+\dt}p^2_dT^{bd}$, $\lder{T}{p^2_b}=(-1)^{\ct+\bt+\ct\bt}p^1_cT^{cb}$, $\lder{T}{p^2_a}=(-1)^{\ct+\at+\ct\at}p^1_cT^{ca}$, and, finally,
$\ldder{T}{p_a^1}{p_b^2}=(-1)^{\at+\bt+\at\bt}T^{ab}$. Substituting into~\eqref{eq.hovrel20} gives
\begin{multline*}
    (-1)^{\at+\dt}p^2_dT^{ad}(-1)^{\ct+\bt+\ct\bt}p^1_cT^{cb}
    -(-1)^{\ct+\at+\ct\at}p^1_cT^{ca}(-1)^{\bt+\dt}p^2_dT^{bd}
    - \\(-1)^{\ct+\dt}\,p^1_cp^2_d\,T^{cd}(-1)^{\at+\bt+\at\bt}T^{ab}=0\,.
\end{multline*}
After taking $p^1_cp^2_d$ out and getting rid of common sign factors, it becomes
\begin{equation*}
     T^{ad}T^{cb} (-1)^{\ct\bt+\at\ct}
     +T^{ac}T^{bd}(-1)^{\dt(\at+\ct)}
     -T^{ab}T^{cd}(-1)^{\bt(\at+ \ct+ \dt)  + \at(\ct+ \dt)}=0
\end{equation*}
or
\begin{equation*}
T^{ab}T^{cd}(-1)^{\bt(\at+ \ct+ \dt)}= T^{ac}T^{bd}(-1)^{\ct(\at+\dt)} + T^{ad}T^{cb}(-1)^{\ct\bt+\at\dt}=0\,,
\end{equation*}
which is exactly the super Pl\"{u}cker relation~\eqref{eq.plueckfor2} for $r=2$ (see Example~\ref{ex.ktwo}).
\end{proof}

In general,   Khudaverdian's relations are not equivalent to Pl\"ucker's relations:

\begin{example} Consider   $T\in \L^3(V)$, so $T(p)=T^{abc}p_1^ap_2^bp_3^c$\,. Working as above, we can show that the Khudaverdian relations~\eqref{eq.hovrel} are equivalent to the following relations for the components (below for simplicity we present  the formulas for the purely even case):
\begin{multline}\label{eq.hovrel3}
    T^{a_1a_2a_3}T^{b_1b_2b_3} + T^{a_1a_2b_3}T^{b_1b_2a_3} - T^{b_1a_2a_3}T^{a_1b_2b_3} \\
    +T^{b_2a_2b_3}T^{a_1b_1b_3}
    - T^{b_1a_2b_3}T^{a_1b_2a_3}
    + T^{b_2a_2b_3}T^{a_1b_1a_3}=0\,.
\end{multline}
Note the $6$ terms. At the same time, the Pl\"ucker relations will be
\begin{equation}\label{eq.plueckthree}
     T^{a_1a_2a_3}T^{b_1b_2b_3}= T^{b_1a_2a_3}T^{a_1b_2b_3}+ T^{b_2a_2a_3}T^{b_1a_1b_3}+ T^{b_3a_2a_3}T^{b_1b_2a_1}=0\,,
\end{equation}
with the $4$ terms.
One can observe directly (this is a  rewarding calculation) that the six-term relation~\eqref{eq.hovrel3} is precisely the difference of two four-term relations~\eqref{eq.plueckthree} with different combinations of indices, so it is their consequence, but not equivalent to   them.
\end{example}

In spite of the fact that it does not give all the relations, the value of this approach is that it is formulated in terms of the multivector $F$ or $T$  as a whole and not some  particular components of it. This seems an advantage particularly for general $r|s$ and possibly this method can give new results there.

\section{Discussion. Connection with   (conjectural) super cluster algebras} \label{sec.clust}

\subsection{General discussion of results.}
In the previous sections, we constructed a superanalog   of the classical Pl\"ucker embedding $G_k(V)\to P(\L^k(V))$ for a vector space $V$ with $\dim V=n$. For a general superspace $V$ with $\dim V=n|m$, we have obtained two versions of such a superanalog. In the case of $r$- or $r|0$-planes in the $n|m$-space $V$ (i.e. the case of purely even planes in the superspace) we constructed the super Pl\"{u}cker map as a map
\begin{equation*}
     \pl\co G_r(V)\to P(\L^r(V))\,,
\end{equation*}
$\dim V=n|m$, in the closest analogy with the classical case. In the general case of $r|s$-planes in the $n|m$-space $V$, the super Pl\"{u}cker map (that we constructed) is a map
\begin{equation*}
  \Pl\co G_{r|s}(V)\to  P_{1,-1}\left(\L^{r|s}(V)\oplus \L^{s|r}(\Pi V)\right)\,.
\end{equation*}
We  found that it is necessary to use multivectors both in $V$ itself and in the parity-reversed space $\Pi V$ for construction of the codomain,   otherwise it is not possible to get the invertibility.

In the previous sections, we proved that (in both cases) we obtain an embedding. We treated the two cases by different methods. While the case of $k$-planes admits a purely algebraic treatment very close to the classical case (but still presenting some surprising answers), for the general case we had to use a more sophisticated approach based on the Voronov-Zorich construction of exterior powers (where the superanalog of a multivector is a function on covectors, no longer multilinear as opposed to the classical prototype).

A key difference with the classical Pl\"{u}cker map, for both considered cases, is the necessity to distinguish  ``essential'' Pl\"{u}cker coordinates. In the classical situation of $k$-planes in $n$-space, the Pl\"{u}cker coordinates are all components of a non-zero $k$-vector treated as homogeneous coordinates in the corresponding projective space. There is no distinction of ``essential'' and ``inessential'' components. Unlike that, in the supercase we identified either a subset of  even and odd components of an even $r$-vector, for $r$-planes in $n|m$-space, or a finite number of even and odd variables obtained by evaluating the Pl\"{u}cker transform of an $r|s$-plane on combinations of covectors from $V$ and $\Pi V$. Therefore we effectively work with ``reduced'' Pl\"{u}cker maps,
\begin{equation*}
     \pl\co G_r(V)\to \pess\,,
\end{equation*}
or
\begin{equation*}
  \Pl\co G_{r|s}(V)\to  \pfin\,.
\end{equation*}
(in the latter case, $\pfin$ is a finite-dimensional supermanifold, which can be seen as a finite-dimensional quotient of $P_{1,-1}\left(\L^{r|s}(V)\oplus \L^{s|r}(\Pi V)\right)$). In the two cases, the idea of ``essential'' Pl\"{u}cker variables comes about differently. In the algebraic case of $r$-planes, it turns out that among all components of a simple multivector (i.e. in the image of the Pl\"{u}cker map) some even components are nilpotent by the virtue of the super Pl\"{u}cker relations as thus cannot be invertible, forcing   others (which we have identified as ``essential'') to be invertible. ``Non-essential'' components can be, further, eliminated and expressed via the ``essential'' components. In the general case of $r|s$-planes, ``essential'' Pl\"{u}cker coordinates emerge as particular values of the Pl\"{u}cker transform or those ``super minors'' of the matrix of homogeneous coordinates on the super Grassmannian (including generalized minors  giving odd values) that are necessary for inverting the super Pl\"{u}cker transform (on its image). When both approaches are applicable, we very satisfactorily come to the same collection of variables.

One feature that makes the super Pl\"{u}cker map substantially different from the classical situation, is the emergence of rational, not polynomial, functions in the case of $r|s$-planes. The emergence of rational, not polynomial in general, objects seems to be unavoidable in super situation. Compare in particular with~\cite{tv:ber}. However, it is a subtle question whether the map $G_{r|s}(V)\to P_{1,-1}\left(\L^{r|s}(V)\oplus \L^{s|r}(\Pi V)\right)$) should   be regarded as rational or regular. Indeed, although the formulas involve denominators, this map is defined on all supermanifold, and    the image of a particular plane being a rational function of covectors, with   poles depending on a plane. However, as soon as we use the reduced supermanifold as the codomain and $\Pl\co G_{r|s}(V)\to  \pfin$, our map becomes indeed rational. There is something to investigate here. One may argue, why use Berezinians as we do  and not try some polynomial functions instead. In principle, there are invariant polynomial functions $\Ber^+$ and $\Ber^{-}$ introduced in~\cite{tv:ber} (so that $\Ber A=\Ber^+ A/\Ber^{-} A$), but they are not multiplicative, so it is not clear how helpful they could be for replacement of the super Pl\"{u}cker coordinates as constructed in this paper. In all cases, it is the rational functions $\Ber$ and $\Ber^*$ that arise in the denominators of the formula for the inverse of a super matrix.

For the classical Pl\"{u}cker map, a key part of the theory is the description of its image as an algebraic variety in the corresponding projective space by the classical Pl\"{u}cker relations. In Section~\ref{sec.simpl} for $r$-planes and in  Section~\ref{sec.general} for $r|s$-planes (and with an additional clarification for $r$-planes), we obtained  superanalogs of Pl\"{u}cker relations in different forms.

In the purely algebraic case of $r$-planes $n|m$-space, there are two versions of the super Pl\"{u}cker relations. One is for all the components of an even multivector $T\in \L^r(V)$, see Theorem~\ref{thm.splueckcompon}. In particular it includes some algebraic relations (such as nilpotence condition) that look very far from the classical example. However, it is possible to identify a  reduced  subset of relations only for the essential super Pl\"{u}cker coordinates, and this version also coincides with the   relations obtained by a different method which we use  for the general case of $r|s$-planes. (The remaining part of the super Pl\"{u}cker relations for the case of $r$-planes makes it possible to eliminate the rest of the Pl\"{u}cker variables expressing them via the essential ones.) See Theorem~\ref{thm.essplueckr0}. These super Pl\"{u}cker relations for the  essential super Pl\"{u}cker variables  for $r$-planes   look relatively closer to the classical   relations. We discuss their application in the next subsection.

We also managed to obtain the super Pl\"{u}cker relations in the general case of $r|s$-planes, see Theorem~\ref{thm.splueck11} and Theorem~\ref{thm.splueckrs}. These look substantially more complicated in comparison with the classical case. We hope to study them further 
elsewhere.  Another topic for further study is the ``Khudaverdian relations''  that we introduced in subsection~\ref{subsec.khud} and explored for the case of $r$-planes, in particular $r=2$. They can   be helpful for obtaining and analyzing the super Pl\"{u}cker relations in other cases as well.  It is also interesting to find a connection with~\cite{kasman:2007} where, for the classical setup, it is proved that the Pl\"{u}cker relations for $2$-planes are in a certain precise sense universal.

\subsection{``Super cluster structure''.}
Super Pl\"ucker relations that we have obtained in this paper lead to a   link with another burning topic, which is ``super cluster algebras''.

Recall that cluster algebras were introduced by~Fomin and Zelevinsky, see~\cite{fomin-zelevinsky:clusterI} and~\cite{fomin-zelevinsky:clusterII}. Cluster algebras found applications in a great number of mathematical areas. (See for example the  book Gekhtman--Shapiro--Vainshtein~\cite{gekhshava:book2010},  where in particular  a connection   with Poisson geometry is developed.)  Informally, the idea of cluster algebras (or a ``cluster structure'') can be rendered as follows. A cluster algebra is a subalgebra  inside some field of rational functions which is generated as an algebra by a collection of groups of variables (called clusters  or 
``extended clusters'') such that each such cluster gives a basis of transcendence (i.e. rational generators) for the ambient field of rational functions and where every cluster is obtained from another cluster by replacing exactly one variable by a rational transformation of a very special form (``cluster mutation'') based on some ``mutation data'' attached to each cluster and encoded by a special form ``exchange matrix'' or a quiver. Some variables are not mutated, they are called stable and form a ground ring of the cluster algebra. The variables that are mutated are specifically referred to as ``cluster variables''. Together with mutation of variables when passing from one cluster to another, the mutation data themselves are also mutated by a certain rule. The exact rules of cluster mutations were the crucial discovery of Fomin and Zelevinsky.

Particular instances of these rules can be seen in many concrete geometric examples that  serve  as prototypes for the definition of cluster algebras. For our purposes it is crucial that the Pl\"{u}cker relations for the ordinary Grassmannian $G_k(\R{n})$ are one of them. The following example is now classical and can be found e.g. in~\cite[Ch. 2]{gekhshava:book2010}.

\begin{example}  The  Pl\"{u}cker relation for $G_2(\R{4})$
can be  written as
\begin{equation}\label{eq.plueckclassic}
     \boxed{T^{13}}\,\boxed{T^{24}}= T^{12}T^{34}+T^{14}T^{23}
\end{equation}
and in this form  it expresses a  mutation  between  the Pl\"{u}cker coordinates    $T^{13}$ and $T^{24}$ as cluster variables, each making a separate cluster. Altogether there are two clusters, $(T^{13})$ and $(T^{24})$.  The variables  labeled $12$, $34$, $14$,  and $23$ are not mutated and are regarded as generating the ground ring of the cluster algebra. A geometric picture can be used. Tensor indices $1,2,3,4$ are identified with the vertices of a square, so that components $T^{ab}$ are depicted as segments $(ab)$ (``diagonals''). The mutated cluster variables e.g. $T^{13}$ correspond to true diagonals, while the ground variables correspond to the sides.  We have boxed mutated
variables in the formula~\eqref{eq.plueckclassic}.   Similar geometric picture works for $G_2(\R{n})$, for other $n$. Clusters are formed by collections of  variables $T^{ab}$ corresponding to maximal sets of non-intersecting true diagonals in the $n$-gon.
\end{example}

Superanalog of  cluster algebras is highly sought after.  ``Super cluster algebras'' in general   remain   conjectural though  important steps towards their definition have been made very recently:  \cite{ovsienko-supercluster:2015},~\cite{ovsienko-shapiro:2018}, and~\cite{srivastava:tosupercluster-2017}. The notions  introduced in \cite{ovsienko-supercluster:2015},~\cite{ovsienko-shapiro:2018} in and~\cite{srivastava:tosupercluster-2017} are different. Perhaps none can be regarded as final. More information from concrete geometric and algebraic examples should be obtained. Since in this paper we obtained super Pl\"{u}cker relations for the super Grassmannians $G_r(\R{n|m})$ and $G_{r|s}(\R{n|m})$, it is natural to ask whether they provide any kind of a \textbf{``super cluster structure''}. We answer this question in the following way. We regard our results on super Pl\"{u}cker relations as ``experimental material'' for a future theory. Below we put forward an interpretation of our super Pl\"{u}cker relations in terms of a ``super cluster structure''.

We shall consider the cases of $G_2(\R{4|m})$ and $G_2(\R{5|m})$ for $m=1$. We believe that the case of arbitrary $m$ should make no substantial difference and actually the same should work for $G_2(\R{n|m})$ with general $n|m$. (With taking into account the known differences of the classical cluster structures of $G_2(\R{n})$ for small and larger values of $n$, see~\cite[Ch.2]{gekhshava:book2010}.)  But this requires further detailed analysis and will be given elsewhere.  Next step should be a generalization to arbitrary $G_r(\R{n|m})$. (While the case of $G_{r|s}(\R{n|m})$ remains intriguing.)

Instead of attempting a particular theory of  super cluster algebras, we    base our considerations on   natural conditions that one can   generalize from the ordinary case: (1) each (extended) cluster containing even and odd variables should generate rationally all other variables, i.e. the number of variables in every extended cluster should be equal to the superdimension; (2) a mutation can use division only by one particular cluster variable from the mutated cluster, and it is not allowed to divide by stable variables. We shall see that in order to satisfy these assumptions, we will have to drop some other properties familiar in the classical case.

How to introduce a ``super cluster structure'' into the considered super Grassmannians? We know that $G_{r|s}(\R{n|m})$ contains as a submanifold the product of ordinary Grassmannians $G_{r}(\R{n})\times G_{s}(\R{m})$. Hence the algebra of functions on the latter product is a quotient of the algebra of functions on the super Grassmannian. In particular, in the case  of $G_2(\R{n|m})$ we need to find a lifting of the classical cluster structure of $G_2(\R{n})$ by introducing odd variables into the clusters and figuring out   new mutations.

Recall the super Pl\"{u}cker variables and super Pl\"{u}cker relations obtained in Section~\ref{sec.simpl} and Section~\ref{sec.general}. In Section~\ref{sec.simpl}, we have identified among all the components of a non-degenerate $T\in\L^2(V)$ those components that we call  ``essential''. These are the even variables $T^{ab}=-T^{ba}$, $a,b=1,\ldots,n$, and the odd variables $\t^{a\mu}=-\t^{\mu a}$, $\mu= 1\ldots,  m$. (The sets of values for $a,b$ and $\mu$ are regarded as subsets of different copies of $\N$.) In the case of $m=1$ we simply write $\t^{a}$. There are also the   ``inessential components'' $S^{\mu\nu}$, which are even and  nilpotent; they can  be eliminated by using  part of  the super Pl\"{u}cker relations, namely  $T^{ab}S^{\la\mu}=-(\t^{a\la}\t^{b\mu}+\t^{a\mu}\t^{b\la})$, so that
the other super Pl\"{u}cker relations containing them will be automatically satisfied (see Section~\ref{subsec.ex}). In the approach developed in  Section~\ref{sec.general}, the ``inessential'' variables simply do not arise.
Only the essential variables will be important for super cluster structure.

Recall the super Pl\"{u}cker relations for the essential Pl\"{u}cker coordinates for $G_2(\R{n|m})$, i.e. equations~\eqref{eq.plueck.even} and \eqref{eq.plueck.odd}   from Theorem~\ref{thm.reducplueck}.
It   is convenient to write them as
\begin{align}
    T^{ac}T^{bd} = T^{ab}T^{cd} + T^{ad}T^{bc} \label{eq.even}\\
    \intertext{and}
    T^{ab}\t^{c\mu}= T^{ac}\t^{b\mu}+ \t^{a\mu}T^{cb} \label{eq.odd}
\end{align}
(emphasizing the fact that in the latter relation even and odd indices are on an equal footing). All indices $a,b,c,d$ in~\eqref{eq.even} and \eqref{eq.odd} have to be different. Note that to be able to obtain by a mutation an odd variable $\t^{c\mu}$, we must have in the initial cluster an even variable $T^{ab}$ (which must be a cluster variable since we will need to divide by it) together with the odd variables $\t^{a\mu}, \t^{b\mu}$. Therefore variables  $T^{ab}$ and $\t^{a\mu}, \t^{b\mu}$ should always go together in a cluster. This inevitably leads to a departure from one property of   classical cluster structure, namely that a mutation changes only one cluster variable. Namely, if we mutate an even variable $T^{ab}$, we must take along with it and mutate at the same time the odd variables $\t^{a\mu}$ and $\t^{b\mu}$.

\begin{example}[Super cluster structure of $G_2(\R{4|1})$]
\label{ex.supcl24}
We have   even variables $T^{ab}=-T^{ba}$ (so essentially six variables $T^{12}$, $T^{13}$, $T^{14}$, $T^{23}$, $T^{24}$, $T^{34}$) and odd variables $\t^a$,  with $a,b=1,\ldots,4$.   In addition to the  classical relation
\begin{equation}\label{eq.plueckclassicagain}
     \boxed{T^{13}}\,\boxed{T^{24}}= T^{12}T^{34}+T^{14}T^{23}
\end{equation}
involving only $T^{ab}$, there are the following odd Pl\"{u}cker relations:
\begin{align}
    \boxed{T^{13}}\,\boxed{\t^2}&= T^{12}\,\boxed{\t^3} + \boxed{\t^1}\,T^{23}\,,  \label{eq.oddmut123}\\
    \boxed{T^{13}}\,\boxed{\t^4}&= T^{14}\,\boxed{\t^3} + \boxed{\t^1}\,T^{43}\,,  \label{eq.oddmut134}  \\
     \boxed{T^{24}}\,\boxed{\t^1}&= T^{21}\,\boxed{\t^4} + \boxed{\t^2}\,T^{14}\,,   \label{eq.oddmut124} \\
       \boxed{T^{24}}\,\boxed{\t^3}&= T^{23}\,\boxed{\t^4} + \boxed{\t^2}\,T^{34}\,.    \label{eq.oddmut234}
\end{align}
The meaning of boxing some variables will be clear in a moment. We define the following \emph{``super clusters''}: $(T^{13}\,|\,\t^1,\t^3)$ and $(T^{24}\,|\,\t^2,\t^4)$. (From the above arguments, there is no other choice.) So above  boxed are all cluster variables. The variables $T^{12}, T^{23}, T^{34}, T^{14}$ are stable (as in the classical case).  There are two \emph{``even  super cluster mutations''} going in the opposite directions. From $(T^{13}\,|\,\t^1,\t^3)$ to $(T^{24}\,|\,\t^2,\t^4)$ we use \eqref{eq.plueckclassicagain} and \eqref{eq.oddmut123}, \eqref{eq.oddmut124} in order to express $T^{24}$ via $T^{13}$   and also $\t^2$, $\t^4$ via $T^{13}$, $\t^1$, $\t^3$ (and the stable variables as well, which we shan't mention in the future). And to go from $(T^{24}\,|\,\t^2,\t^4)$ to $(T^{13}\,|\,\t^1,\t^3)$ we similarly use \eqref{eq.plueckclassicagain} and \eqref{eq.oddmut124}, \eqref{eq.oddmut234}. Graphically this can be represented as
\begin{center}
\begin{tikzpicture}
\node (A) {\sqot};
\node (B) [right =of A] {\sqdc};
\path [->] let \p1=($(A)-(B)$),\n1={atan2(\y1,\x1)},\n2={180+\n1} in
     ($ (B.\n1)!2pt!90:(A.\n2) $) edge node {$T^{24}$} ($ (A.\n2)!2pt!-90:(B.\n1) $);
    \path [->] let \p1=($(B)-(A)$),\n1={atan2(\y1,\x1)},\n2={180+\n1} in
    ($ (A.\n1)!2pt!90:(B.\n2) $) edge node {$T^{13}$} ($ (B.\n2)!2pt!-90:(A.\n1) $);
\end{tikzpicture}
\end{center}
Here the even cluster variables are represented by proper diagonals of the square and odd cluster variables by red vertices. Arrows between the squares are the two mutations. They are labeled by the mutated even cluster variable (the one by which we divide). It is a little exercise to check that these two mutations are indeed mutually inverse for odd variables (for even variables, it is true by the definition). Basically, one has to check that if $\t^1,\t^3$ are expressed from $T^{24},\t^2,\t^4$ by using the odd relations~\eqref{eq.oddmut124}, \eqref{eq.oddmut234} and then $\t^2$, $\t^4$ are expressed from  $T^{13}$, $\t^1$, $\t^3$ by \eqref{eq.oddmut123}, \eqref{eq.oddmut124}, it will give the initial variables. For example, for $\t^1$ we have
\begin{multline*}
    \t^1=\frac{1}{T^{24}}(T^{21}\,\t^4 + \t^2\,T^{14})=
    \frac{T^{13}}{T^{12}T^{34}+T^{14}T^{23}}\left(T^{21}\,\frac{T^{14}\,\t^3 + \t^1\,T^{43}}{T^{13}} + \frac{T^{12}\,\t^3  +  \t^1\,T^{23}}{T^{13}}\,T^{14}\right) \\
    =\frac{T^{21}\,(T^{14}\,\t^3 + \t^1\,T^{43}) + (T^{12}\,\t^3  +  \t^1\,T^{23})\,T^{14}}{T^{12}T^{34}+T^{14}T^{23}} \\
    =\frac{(T^{21}T^{14}+T^{12}T^{14})\,\t^3
    + \t^1(T^{21}T^{43}+T^{23}T^{14})}%
    {T^{12}T^{34}+T^{14}T^{23}}=\t^1\,,
\end{multline*}
where we used the even relation~\eqref{eq.plueckclassicagain}. Note that all the expected properties are satisfied. In particular, by starting from one super cluster we can generate all  other  variables.
\end{example}

This example of $G_2(\R{4|1})$ does not exhibit all ``super cluster'' features. Namely, there is no situation when odd variables are mutated without simultaneously mutating an even variable. This we will observe in the next example.

\begin{example}[Super cluster structure of $G_2(\R{5|1})$]
\label{ex.supcl25}
Now we have even variables $T^{ab}=-T^{ba}$ (essentially ten even variables) and odd variables $\t^a$, $a,b=1,\ldots,5$. The ``super cluster'' picture is given by the following exchange graph (explained below):
\begin{center}
\pentapic
\end{center}
Recall that in the classical case, i.e. for $G_2(\R{5|1})$,  the clusters correspond to the triangulations of the pentagon by proper diagonals and there are five such clusters
(see~\cite[\S2.1.3]{gekhshava:book2010}). To obtain super clusters, we need to include odd variables. Arguing as above, we arrive at   ten \emph{super clusters} naturally grouped into pairs (corresponding to the classical clusters).
\begin{align*}
    (T^{13}, T^{14}\,|\,\t^1,\t^3), \
    (T^{13}, T^{35}\,|\,\t^3,\t^5), \
    (T^{25}, T^{35}\,|\,\t^2,\t^5), \
    (T^{24}, T^{25}\,|\,\t^2,\t^4), \
    (T^{14}, T^{24}\,|\,\t^1,\t^4), \\
    (T^{13}, T^{14}\,|\,\t^1,\t^4), \
    (T^{13}, T^{35}\,|\,\t^1,\t^3), \
    (T^{25}, T^{35}\,|\,\t^3,\t^5), \
    (T^{24}, T^{25}\,|\,\t^2,\t^5), \
    (T^{14}, T^{24}\,|\,\t^2,\t^4)\,.
\end{align*}
From each cluster there are two possible \emph{(super) mutations}: an \emph{even super mutation} (like in the previous example) that exchanges one even variable and two accompanying odd variables, and an \emph{odd super mutation} that exchanges one odd variable (leaving other variables, even and odd, unchanged). The graph  
shows all super clusters and (super)mutations.  Even mutations are depicted by arrows, odd mutations are depicted by blue dashed lines. Arrows   are marked by even variables that are  mutated, dashed lines are marked by pairs of odd variables that are mutated.
Write, for example, formulas for mutations for the cluster $(T^{13}, T^{14}\,|\,\t^1,\t^4)$, the top left on the picture. The \textbf{even mutation} (on the picture, the arrow marked with $T^{14}$):
\begin{align}
    \boxed{\underline{T^{14}}}\,\boxed{\underline{T^{35}}}&= \boxed{T^{13}}\,T^{45} + T^{15}\,T^{34}\,,\label{eq.t14t35}\\
    \boxed{\underline{T^{14}}}\,\boxed{\underline{\t^{3}}}&= \boxed{T^{13}}\,\boxed{\underline{\t^{4}}} + \boxed{\underline{\t^{1}}}\,T^{34}\,,\label{eq.t14th3ev}\\
    \boxed{\underline{T^{14}}}\,\boxed{\underline{\t^{5}}}&= T^{15}\,\boxed{\underline{\t^{4}}} + \boxed{\underline{\t^{1}}}\,T^{54}\,.\label{eq.t14th5ev}
\end{align}
By this mutation we obtain the cluster $(T^{13}, T^{35}\,|\,\t^3,\t^5)$. The \textbf{odd mutation} (on the picture, the dashed line marked with $\t^3,\t^4$):
\begin{equation}\label{eq.t14th3od}
    \boxed{T^{14}}\,\boxed{\underline{\t^{3}}} = \boxed{T^{13}}\,\boxed{\underline{\t^{4}}} + \boxed{\t^{1}}\,T^{34}\,.
\end{equation}
By this mutation we obtain the cluster $(T^{13}, T^{14}\,|\,\t^1,\t^3)$.
Boxed are the cluster variables, not boxed are the ground variables (which are the same as in the classical case). We have also underlined for clarity the variables that are exchanged by a mutation. Formulas~\eqref{eq.t14th3ev} and~\eqref{eq.t14th3od} are the same, but play different roles: for the latter, the even cluster variable $T^{14}$ is not itself mutated. While it is obvious that the formula equivalent to~\eqref{eq.t14th3od},
\begin{equation}\label{eq.t13th4od}
    \boxed{T^{13}}\,\boxed{\underline{\t^{4}}} = \boxed{T^{14}}\,\boxed{\underline{\t^{3}}} + \boxed{\t^{1}}\,T^{43}\,,
\end{equation}
gives the inverse odd mutation (from $(T^{13}, T^{14}\,|\,\t^1,\t^3)$ to $(T^{13}, T^{14}\,|\,\t^1,\t^4)$), it is a little exercise to show that the even mutation $(T^{13}, T^{35}\,|\,\t^3,\t^5)$ to  $(T^{13}, T^{14}\,|\,\t^1,\t^3)$ given by
\begin{align}
    \boxed{\underline{T^{35}}}\,\boxed{\underline{T^{14}}} &= \boxed{T^{31}}\,T^{54} + T^{34}\,T^{15}\,,\\
    \boxed{\underline{T^{35}}}\,\boxed{\underline{\t^{1}}}&=
    \boxed{T^{31}}\,\boxed{\underline{\t^{5}}} + \boxed{\underline{\t^{3}}}\,T^{15}\,, \\
    \boxed{\underline{T^{35}}}\,\boxed{\underline{\t^{4}}}&=
    T^{34}\,\boxed{\underline{\t^{5}}} + \boxed{\underline{\t^{3}}}\,T^{45}
\end{align}
and the even mutation given by \eqref{eq.t14t35}, \eqref{eq.t14th3ev}, \eqref{eq.t14th5ev} are mutually inverse. The part involving only $T^{14}$ and $T^{35}$ is of course obvious. One needs to check for the odd variables. It works as in Example~\ref{ex.supcl24} and we skip the calculation.
\end{example}

As mentioned, we hope such a picture extends to $G_2(\R{n|m})$ with general $n|m$ (and further to arbitrary $G_r(\R{n|m})$). For example, for $G_2(\R{n|1})$, the super clusters look as in Examples~\ref{ex.supcl24} and~\ref{ex.supcl25}: they contain the same even variables as for the underlying ordinary Grassmannian $G_2(\R{n})$ labeled by   proper diagonals giving a triangulation of the $n$-gon and additionally 
$2$ odd variables labeled by the vertices of one of the proper diagonals which is marked (and the frozen variables are the same as in the classical case, corresponding to the sides).

It is intriguing if the more complicated super Pl\"{u}cker relations such as in Theorem~\ref{thm.splueck11} and Theorem~\ref{thm.splueckrs} can be also   related with some sort of a (super) cluster structure.



\def\cprime{$'$}

\end{document}